\newcommand{\cmark}{\ding{51}}%
\newcommand{\xmark}{\ding{55}}%
\newtheorem{assumption}{Assumption}
\DeclareMathOperator{\argmin}{argmin}
\definecolor{brightpink}{rgb}{1.0, 0.0, 0.5}
\newcommand{\revise}[1]{{{\color{black} #1}}}
\begin{document}

\title{Algorithms for Nonnegative Matrix Factorization with the  Kullback-Leibler Divergence} 
\titlerunning{Algorithms for KL NMF}    
\author{Le Thi Khanh Hien         \and
        Nicolas Gillis  
}


\institute{L. T. K. Hien and N. Gillis \at Department of Mathematics and Operational Research,  
Facult\'e Polytechnique, Universit\'e de Mons,
Rue de Houdain 9, 7000 Mons, Belgium\\
              \email{thikhanhhien.le@umons.ac.be, nicolas.gillis@umons.ac.be}           
}

\date{Received: date / Accepted: date}

\maketitle


\begin{abstract}
Nonnegative matrix factorization (NMF) is a standard linear dimensionality reduction technique for nonnegative data sets. 
In order to measure the discrepancy between the input data and the low-rank approximation, the Kullback-Leibler (KL) divergence  is one of the most widely used objective function for NMF. 
It corresponds to the maximum likehood estimator when the underlying statistics of the observed data sample follows a Poisson distribution, and KL NMF is particularly meaningful for count data sets, such as documents or images. 
In this paper, 
we first collect important properties of the KL objective function that are essential to study the convergence of KL NMF algorithms. 
Second, together with reviewing existing algorithms for solving KL NMF, we propose three new algorithms that guarantee the non-increasingness of the objective function. We also provide a global convergence guarantee
for one of our proposed algorithms. 
Finally, we conduct extensive numerical experiments to provide a comprehensive picture of the performances of the KL NMF algorithms.  
\keywords{nonnegative matrix factorization \and 
  Kullback-Leibler divergence \and 
  Poisson distribution \and 
  algorithms}
\end{abstract}


\section{Introduction} 

Given a nonnegative matrix data $V\in\mathbb{R}_{+}^{m\times n}$ and a positive integer $r\leq \min(m,n)$, nonnegative matrix factorization (NMF)  is the problem of finding 
$W\in\mathbb{R}_{+}^{m\times r}$ and $H\in\mathbb{R}_{+}^{r\times n}$ such that $V\approx WH$. The quality of the approximation is measured using an objective function, which typically has the form  
\[ 
D\left(V|WH\right) 
\triangleq\sum_{i=1}^{m}\sum_{j=1}^{n}d\left(V_{ij}|(WH)_{ij}\right),
\]
 where $d(x|y)$ is a scalar cost function such that $d(x|y) \geq 0$ for all $x,y \geq 0$ and $d(x|y) = 0$ if and only if $x=y$. 
 NMF is then written as the following problem 
\begin{equation*}
\min_{W\in\mathbb{R}^{m\times r}_+,H\in\mathbb{R}^{r\times m}_+} D\left(V|WH\right).
\end{equation*}
The most widely used class of the scalar cost functions $d(x|y)$ is the $\beta$-divergence~\cite{Fevotte2011} 
$$d_{\beta}(x|y)=\begin{cases}
\frac{1}{\beta(\beta-1)}\big(x^{\beta}+(\beta-1)y^{\beta}-\beta xy^{\beta-1}\big) &{\rm if}\:\beta\in\mathbb{R}\setminus\left\{ 0,1\right\}, \\
x\log\frac{x}{y}-x+y  & {\rm if}\:\beta=1,\\
\frac{x}{y}-\log\frac{x}{y}-1  &{\rm if}\:\beta=0.
\end{cases}$$
In this paper, we are interested in the Kullback-Leibler (KL) divergence (also known as the I-divergence), which is the $\beta$-divergence with $\beta=1$.  The KL NMF problem can be rewritten as follows:  
\begin{equation}
\label{KLNMF}
\min_{W\in\mathbb{R}^{m\times r}_+,H\in\mathbb{R}^{r\times m}_+} D_{\rm KL} (V|WH), 
\end{equation}
where 
\begin{align*}
D_{\rm KL} (V|WH) := 
\sum_{i=1}^m \sum_{j=1}^n\big((WH)_{ij}- V_{ij}\log(WH)_{ij}\big)  +\sum_{i=1}^m \sum_{j=1}^n \big(V_{ij}\log V_{ij}-V_{ij} \big). 
\end{align*} 
With the convention that $0 \times \log 0 =0$ and $a \times \log 0 = -\infty$ for $a>0$, the objective function in \eqref{KLNMF} is well-defined and it is an extended-value function, that is, $D_{\rm KL}\in [0,+\infty]$. 
KL NMF \eqref{KLNMF} is well-posed, that is, a solution always exists (see Section~\ref{sec:existence_sol} for more details),  
but the solution is in general 
non-unique even when removing the scaling and permutation ambiguities of the low-rank approximation $WH$; see~\cite{xiao2019uniq} and the references therein.  

\vspace{-0.1in}
\subsection{Motivation} \label{sec:motiv} Since the seminal paper of Lee and Seung \cite{Lee99}, NMF has been shown to be a very powerful model to extract perceptually meaningful features from high-dimensional data sets. Applications include facial feature extraction~\cite{Lee99,Guillamet}, recovery and document classification~\cite{Lee99,DING2008,SHAHNAZ2006}, unmixing hyperspectral images~\cite{Bioucas-Dias,Maetal2014}; see also~\cite{cichocki2009nonnegative,gillis2014,xiao2019uniq} and the references therein. 
The most widely used objective function for NMF is the Frobenius norm $\|V - WH\|_F^2$ which corresponds to the $\beta$-divergence with $\beta = 2$. For nonnegative data sets, the Frobenius norm is however not the theoretically most reasonable choice. In fact, it corresponds to the maximum likelihood estimator in the presence of additive i.i.d.\ Gaussian noise. Under this noise distribution, observing negative entries in a data set has a positive probability. 
Moreover, many nonnegative data sets are sparse in which case Gaussian noise is clearly not appropriate~\cite{Lee99,Fevotte2009_prob,chi2012tensors}. 

Let us assume that $V_{ij}$ is a sample of the random variable $\tilde{V}_{ij}$ following the Poisson distribution of parameter $(WH)_{ij}$, that is,  
\[ 
\mathbb{P}\big(\tilde{V}_{ij}=k\big) 
= \frac{(WH)_{ij}^k \, e^{-(WH)_{ij}}}{k!} \text{ for } k=0,1,2, \ldots 
\] 
Then the maximum likelihood estimator of $W$ and $H$, given $V$, is the solution of \eqref{KLNMF}. 
The Poisson distribution is particularly well suited for integer-valued data sets, such a documents represented by vector of word counts~\cite{chi2012tensors,leskovec2014mining}, 
or images which can be interpreted as a photon counting process~\cite{hasinoff2014photon}.  
\revise{KL NMF has also been used successfully in bioinformatics~\cite{lin2007convergence}, e.g., to  
  cluster samples of RNA sequencing gene expression data~\cite{dey2017visualizing}.}  
It is worth noting that KL NMF also makes sense when $V$ contains non-integer entries. In that case, $V_{ij}$ can be interpreted as the average of several samples of the random variable $\tilde{V}_{ij}$. 

In the literature, NMF with the Frobenius norm (Fro NMF) has been thoroughly studied and well-documented. 
Among algorithms for Fro NMF, the block coordinate (BC) methods, which update one block of the variables at a time, have the best performance in practice, 
see for example  
\cite{cichocki2009nonnegative,Hsieh2011,Xu2013,gillis2014,Ang2018,Hien2019} and the references therein. 
Note that one ``block'' here can be a full matrix, a column, a row, or even just a scalar component of the matrices $W$ or $H$. 
The objective function of Fro NMF has several nice properties 
that allow us to apply some advanced development of BC methods for solving composite block-wise convex optimization problems, which subsume Fro NMF as a special case, to derive very efficient algorithms with some rigorous convergence guarantee~\cite{Hien2019,Xu2013}. 
One of the most important properties is that the gradients of the objective with respect to $W$ and $H$, that is, $W\mapsto \nabla_W D_{\rm{Fro}}$ and $H\mapsto \nabla_H D_{\rm{Fro}}$, 
are Lipschitz-continuous over $W \in \mathbb{R}^{m\times r}_+$, $H \in \mathbb{R}^{r\times n}_+$. 
Although the objective of KL NMF, similarly to $D_{\rm{Fro}}$, is block-wise convex (that is, the function   $W\mapsto D_{\rm{KL}}$ and $H\mapsto D_{\rm{KL}}$ are convex), $D_{\rm KL}$ is even not differentiable at $W$ or $H$ when $(WH)_{ij}=0$ for some $i,j$. Since $D_{\rm{KL}}$ does not possess the nice smooth properties of Fro NMF, 
the extension of the analysis of BC methods from Fro NMF to KL NMF is restricted. 
Proposing a good algorithm for solving KL NMF is therefore a more difficult task compared to the Fro NMF. 
In fact, there are much fewer papers studying algorithms for KL NMF in the literature; in particular, algorithms with convergence guarantee are scarce. To the best of our knowledge, the multiplicative updates (MU) with some modification of KL NMF (see Section \ref{sec:MU} for more details) 
is the only algorithm that has a subsequential convergence guarantee. Moreover, the MU are the most widely used algorithm for KL NMF, while it is well known that, for Fro NMF, the MU are slow and should not be used; see~\cite{gillis2014} and the references therein.  
 These observations motivate this work which analyses in details the properties and algorithms for KL NMF~\eqref{KLNMF}. 

\vspace{-0.1in}
\subsection{Contribution and outline}
Our main contribution is threefold.

\begin{enumerate}

\item In Section \ref{sec:KLprop}, we present important properties of KL NMF~\eqref{KLNMF} for the convergence analysis of KL-NMF algorithms.  
Some are well-known while others have not yet been highlighted in the literature, to the best of our knowledge.  

\item While existing algorithms for~\eqref{KLNMF} are briefly reviewed in Section~\ref{sec:sota}, we present two new algorithms in Section~\ref{sec:newmember}. They are (i) 
 a block mirror descent method (BMD), for which we prove the global convergence of its generated sequence to a stationary point of a slightly perturbed version of KL NMF, and (ii) a scalar Newton-type algorithm which monotonically decreases the objective function. To the best of our knowledge, BMD is the first algorithm for KL NMF that has a global convergence guarantee.  
We also propose a hybridization between the scalar Newton algorithm and the MU for which the objective function is guaranteed to be 
non-increasing. 

\item In Section \ref{sec:experiment}, 
we perform extensive numerical experiments on synthetic as well as real data sets to compare the performance of the algorithms. 
To the best of our knowledge, this is the first time such a comparison is performed. It provides a good picture on the performance of the algorithms in different scenarios. 

\end{enumerate}


  We hope that the paper will be a good reference for whomever is using KL NMF, or is interested in KL NMF algorithms.


\vspace{-0.1in}
\section{Some properties of KL NMF} 
\label{sec:KLprop}

In the remainder of the paper, for simplicity, we denote $D(V|WH) = D_{\rm{KL}}(V|WH)$ since we only consider the KL divergence. 
The objective function of KL NMF \eqref{KLNMF} is not finite at every point of its constraint set since $D(V|WH)=\infty$ when $V_{ij}>0$ and $(WH)_{ij}=0$. This fact makes the convergence analysis of 
algorithms for  \eqref{KLNMF} very challenging. Hence, in parallel with considering KL NMF~\eqref{KLNMF}, we propose to study the following  perturbed version 
\begin{align} \label{KLNMF_perturbed}
\min_{W, H} 
& \quad D(V|WH) \\ 
   \text{such that } &  
  W_{ik}\geq \varepsilon,   
  H_{kj} \geq \varepsilon \text{ for } 
  i \in [m], 
  j \in [n], 
  k \in [r],   \nonumber 
\end{align}
where $\varepsilon \geq 0$, and $[n]$ denotes the set $\{1,\ldots,n\}$. Problem \eqref{KLNMF_perturbed} is equivalent to KL NMF \eqref{KLNMF} when $\varepsilon=0$.  When $\varepsilon>0$, 
the objective of \eqref{KLNMF_perturbed} is finite at every point of its constraint set. 
Moreover, a solution of \eqref{KLNMF_perturbed} has all its  entries strictly positive when $\varepsilon > 0$. 
 However, for $\varepsilon$ sufficiently small 
 (we recommend to use the machine precision), then such entries can be considered as zeros, which will not influence the objective function of KL NMF much; see Proposition~\ref{prop:sol_existence} below. 
 

\vspace{-0.1in}
\subsection{Existence of solutions}
\label{sec:existence_sol}

The following proposition proves the existence of solutions of Problem~\eqref{KLNMF_perturbed} and provides a connection between the optimal value of \eqref{KLNMF} and its perturbed problem~\eqref{KLNMF_perturbed} with $\varepsilon>0$. The proof is given in Appendix~\ref{app:proofprop1}. 
\begin{proposition} 
\label{prop:sol_existence}
(A) Given $\varepsilon\geq 0$ and a nonnegative matrix $V$, Problem~\eqref{KLNMF_perturbed} attains its minimum, that is, it has at least one optimal solution. 

(B) Let $ \nu=\sum_{i=1}^m\sum_{j=1}^n V_{ij}$. Denote $D^*(V,\varepsilon)$ be the optimal value of 
Problem~\eqref{KLNMF_perturbed}. We have 
$D^*(V,\varepsilon) \leq D^*(V,0) + (\min\{ n+ mr, m+nr\} \sqrt{\nu}+mn\varepsilon)\varepsilon.
$
\end{proposition}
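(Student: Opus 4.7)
The plan is to prove the two parts separately.

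For (A), I distinguish $\varepsilon > 0$ and $\varepsilon = 0$. When $\varepsilon > 0$, the feasible set is closed, and the KL objective is finite and continuous on it because every entry of $WH$ is bounded below by $r\varepsilon^2 > 0$. The key step is coercivity: if a feasible sequence $(W_n, H_n)$ has unbounded norm, then some entry of $W_n$ or $H_n$ blows up, and because the companion factor is bounded below by $\varepsilon$, an entire row or column of $W_n H_n$ blows up, so the linear part of $D_{\rm KL}$ dominates the logarithmic part and $D(V|W_n H_n) \to \infty$. Sublevel sets are therefore compact, giving existence. For $\varepsilon = 0$, the scaling ambiguity $(W, H) \mapsto (W\Lambda, \Lambda^{-1} H)$ prevents a direct compactness argument, so I first quotient it out by imposing, e.g., $\|W(:,k)\|_1 = \|H(k,:)\|_1$ for each $k$, then rerun coercivity on normalized minimizing sequences using lower semicontinuity of the extended-valued KL divergence to pass to the limit.

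For (B), I would construct an explicit feasible point of \eqref{KLNMF_perturbed} whose objective value is close to $D^*(V, 0)$. Starting from an optimizer $(W^*, H^*)$ of \eqref{KLNMF} (provided by (A)), a key auxiliary identity is $\|W^* H^*\|_1 = \nu$, obtained by minimizing the one-parameter family $c \mapsto D(V|cW^*H^*) = c\|W^*H^*\|_1 - \nu\log c + \mathrm{const}$ over $c > 0$ (feasible because $cW^* \geq 0$), whose stationary point $c = \nu/\|W^*H^*\|_1$ must equal $1$ at the optimum. I exploit the scaling invariance $(W^*, H^*) \mapsto (W^*\Lambda, \Lambda^{-1}H^*)$ in two symmetric ways. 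Under Option~1, set $\|W^*(:,k)\|_1 = \sqrt{\nu}$ for every $k$; then $\|W^*\|_1 = r\sqrt{\nu}$, and the identity $\sum_k \|W^*(:,k)\|_1 \|H^*(k,:)\|_1 = \|W^*H^*\|_1 = \nu$ forces $\|H^*\|_1 = \sqrt{\nu}$. Option~2 (normalizing rows of $H^*$) swaps these roles, yielding $\|H^*\|_1 = r\sqrt{\nu}$ and $\|W^*\|_1 = \sqrt{\nu}$.

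Now define $\tilde W = W^* + \varepsilon J_{m\times r}$ and $\tilde H = H^* + \varepsilon J_{r\times n}$, which is manifestly feasible for \eqref{KLNMF_perturbed}. Since $\tilde W \tilde H \geq W^* H^*$ componentwise, the logarithmic contribution to $D(V|\tilde W \tilde H) - D(V|W^*H^*)$ is non-positive; the linear contribution $\sum_{ij}[(\tilde W \tilde H)_{ij} - (W^*H^*)_{ij}]$ expands directly into $n\varepsilon\|W^*\|_1 + m\varepsilon\|H^*\|_1$ plus a pure cross-term of order $\varepsilon^2$. Substituting the scaling bounds yields $(m + nr)\sqrt{\nu}\,\varepsilon$ under Option~1 and $(n + mr)\sqrt{\nu}\,\varepsilon$ under Option~2, and taking the minimum produces the stated coefficient of $\varepsilon$.

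The main obstacle is the existence claim in (A) for $\varepsilon = 0$: the extended-valued objective and the scaling-induced non-compactness require a careful normalization quotient and a lower semicontinuity argument to ensure that a minimizing sequence has a limit point in the feasible set with the correct objective value. A secondary subtlety in (B) is justifying $\|W^*H^*\|_1 = \nu$ at boundary optima of \eqref{KLNMF} where $(W^*, H^*)$ may have zero entries, but this is immediate since the one-dimensional scaling by $c > 0$ preserves nonnegativity and therefore remains feasible throughout.
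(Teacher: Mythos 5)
Your proposal is correct and reaches the stated bound, but it takes a genuinely different route from the paper in part (A) and a mildly different one in part (B). For existence when $\varepsilon=0$, the paper does not run a direct-method/lower-semicontinuity argument: it successively restricts the search to a compact set $\mathcal A_3=\{(W,H): He=e,\ We=Ve,\ (WH)_{ij}\geq\delta \text{ whenever } V_{ij}>0\}$ on which the objective is genuinely continuous, the constraint $We=Ve$ being extracted from the observation that one multiplicative update does not increase the objective and automatically lands on that slice. Your approach --- quotient the scaling ambiguity by balancing $\|W_{:,k}\|_1=\|H_{k,:}\|_1$, note that $\|WH\|_1=\sum_k\|W_{:,k}\|_1\|H_{k,:}\|_1$ forces normalized minimizing sequences to be bounded, then pass to the limit by lower semicontinuity of the extended-valued divergence --- is shorter and avoids the MU machinery, at the price of not producing the explicit compact set $\mathcal A_3$ that the paper recycles verbatim in part (B). For (B), both proofs build the same feasible point $(W^*+\varepsilon ee^\top,\,H^*+\varepsilon ee^\top)$ and discard the logarithmic term by monotonicity; you justify the normalization yielding the coefficient $\min\{n+mr,m+nr\}\sqrt{\nu}$ via the scaled-at-optimality identity $\|W^*H^*\|_1=\nu$ (essentially Propositions~\ref{prop:scaleproperty} and~\ref{prop:scalepoint}) combined with a balanced column/row normalization, whereas the paper reduces to $\nu=1$ through the homogeneity $D^*(V,\varepsilon)=\nu D^*(V/\nu,\varepsilon/\sqrt{\nu})$ and reads the required norms off $\mathcal A_3$; the two are equivalent.

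One caveat, which affects the paper's proof as much as yours: the quadratic cross term is $(\varepsilon e_me_r^\top)(\varepsilon e_re_n^\top)=r\varepsilon^2\,e_me_n^\top$, whose entries sum to $mnr\varepsilon^2$, not $mn\varepsilon^2$. You left this coefficient unspecified (``of order $\varepsilon^2$''), and the paper silently drops the factor $r$, so what both arguments actually establish is the bound with $mnr\varepsilon^2$ in place of $mn\varepsilon^2$; the example $V=0$ with $r\geq2$, where $D^*(0,\varepsilon)=mnr\varepsilon^2$ and $D^*(0,0)=0$, shows the factor $r$ cannot simply be removed. This is a defect of the statement rather than of your strategy, but to match the proposition as written you would need to make the $\varepsilon^2$ coefficient explicit.
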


Proposition~\ref{prop:sol_existence}(A) is known for the case $\varepsilon=0$; see for example~\cite[Proposition 2.1]{FS2006}, while Proposition \ref{prop:sol_existence}(B) is new. 
Given $\delta \geq 0$, Proposition \ref{prop:sol_existence}(B) shows that by choosing $\varepsilon$ such that 
$
 \min\{n+mr,m+nr\}\sqrt{\nu} \varepsilon +  mn \varepsilon^2 =\delta, 
$
 an optimal solution of~\eqref{KLNMF_perturbed} is a $\delta$-optimal solution of KL NMF~\eqref{KLNMF}. 
  \revise{Proposition~\ref{prop:sol_existence}(B) shows that, 
  for $\varepsilon$ sufficiently small, the objective function of \eqref{KLNMF_perturbed} is not significantly larger than that of~\eqref{KLNMF}. However, it says nothing on the corresponding optimal solutions.  To the best of our knowledge, no sensitivity analysis of the optimal solutions exist in the NMF literature. In fact, 
  there could exist several isolated optimal solutions (see, e.g., \cite{gillis2017introduction}, for some examples) which makes a sensitivity analysis difficult in the general case because the optimal solution can change drastically for an arbitrarily small perturbation (leading to an infinite condition number of the problem).  
  This is an interesting direction of further research.  }

\subsection{KKT and stationary points} 
\label{sec:KKT}

A pair $(W^*,H^*)$  is a Karush--Kuhn--Tucker (KKT) 
point of~\eqref{KLNMF_perturbed} if it satisfies the KKT optimality condition of  \eqref{KLNMF_perturbed}, that is, if the objective function $f(W,H) := D(V,WH)$ is differentiable at $(W^*,H^*)$, 
and for all $i\in [m]$ and $k\in [r]$:   $W_{ik}^*\geq\varepsilon$ and 
\begin{equation}
\label{KKTpoint}
\begin{aligned}  
 \frac{\partial f(W^*,H^*)}{\partial  W_{ik}} = \sum_{j=1}^{n}H_{kj}^*-\sum_{\{j | V_{ij}>0\}}^{n}V_{ij}\frac{H_{kj}^*}{(W^*H^*)_{ij}} & \geq 0,  (\ref{KKTpoint}a)\\
 (W^*_{ik}-\varepsilon)\frac{\partial f(W^*,H^*)}{\partial  W_{ik}} & =0,     (\ref{KKTpoint}b)\\
\end{aligned}
\end{equation}
and similarly for $H^*$, by symmetry. A pair $(W^*,H^*)$ is a stationary point of~\eqref{KLNMF_perturbed} if it is a feasible point of~\eqref{KLNMF_perturbed} that lies in the domain of $f$ and for all $i\in [m]$ and $k\in [r]$, it satisfies 
\begin{equation}
\label{eq:Stationary1}
\frac{\partial f(W^*,H^*)}{\partial  W_{ik}}(W_{ik}-W_{ik}^*)\geq 0  \text{ for any } W_{ik}\geq \varepsilon, 
\end{equation}
and similarly for $H^*$. 
For Problem~\eqref{KLNMF_perturbed}, it turns out that KKT points and stationary points coincide. 
Since $W_{ik}^*-\varepsilon\geq 0$, (\ref{KKTpoint}a) and  (\ref{KKTpoint}b) hold if and only if \eqref{eq:Stationary1} holds. Indeed, let $(W^*,H^*)$ satisfy~\eqref{eq:Stationary1} then choosing $W_{ik}$ 
in~\eqref{eq:Stationary1} to be $W_{ik}^*+1$ gives  (\ref{KKTpoint}a) while choosing $W_{ik}$ in~\eqref{eq:Stationary1} to be $\varepsilon$ or $2W_{ik}^*-\varepsilon$  gives (\ref{KKTpoint}b). The reverse direction is obvious. The same reasoning applies to $H^*$.  

The following proposition provides an interesting property of KKT points of  KL NMF \eqref{KLNMF}, see~\cite[Theorem 1]{Ho2008} for its proof.  Note that this property does not hold for its perturbed variant~\eqref{KLNMF_perturbed} with $\varepsilon>0$. 
\noindent 
\begin{proposition}
\label{prop:scaleproperty}
If $(W^*,H^*)$  is a KKT point of~\eqref{KLNMF}, then $W^*H^*$ preserves the row sums and the column sums of $V$, that is, 
\[ 
V e =(W^*H^*)e 
\quad 
\text{ and } 
\quad 
e^\top V = e^\top (W^*H^*), 
\] 
 where  $e$ denotes the vector of all ones of appropriate dimension. 
\end{proposition}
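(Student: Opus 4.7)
The plan is to leverage the complementary slackness part of the KKT condition in the unperturbed case ($\varepsilon = 0$), namely $W_{ik}^* \, \partial f(W^*, H^*)/\partial W_{ik} = 0$ for every $i, k$, together with the explicit form of the gradient recalled in~(\ref{KKTpoint}a). The idea is to form a weighted sum of these scalar identities that collapses into the desired row-sum (resp.\ column-sum) equality. Specifically, I would multiply $\partial f/\partial W_{ik}$ by $W_{ik}^*$ and sum over $k \in [r]$ for a fixed row index $i$; a symmetric argument on $H^*$ will handle the column sums.

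Carrying out the summation, the first piece gives
\[
\sum_{k=1}^r W_{ik}^* \sum_{j=1}^n H_{kj}^* \;=\; \sum_{j=1}^n (W^*H^*)_{ij},
\]
while the second piece, after swapping the order of summation, becomes
\[
\sum_{\{j \mid V_{ij}>0\}} V_{ij}\,\frac{\sum_{k=1}^r W_{ik}^* H_{kj}^*}{(W^*H^*)_{ij}}
\;=\; \sum_{\{j \mid V_{ij}>0\}} V_{ij}
\;=\; \sum_{j=1}^n V_{ij},
\]
where we implicitly use that $(W^*H^*)_{ij}>0$ whenever $V_{ij}>0$ (otherwise $f(W^*,H^*) = +\infty$ and $f$ would fail to be differentiable at $(W^*,H^*)$, contradicting the KKT assumption). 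Combining the two pieces yields $\sum_j (W^*H^*)_{ij} = \sum_j V_{ij}$ for every $i$, which is exactly $(W^*H^*)e = Ve$. The same computation performed on the KKT system for $H^*$ (summing $H_{kj}^* \,\partial f/\partial H_{kj}$ over $k$, for fixed column $j$) yields $e^\top(W^*H^*) = e^\top V$.

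The only subtle point is justifying the finiteness/well-definedness of the gradient expression that lets the algebraic cancellation go through, i.e.\ the implicit argument that $V_{ij}>0$ forces $(W^*H^*)_{ij}>0$ at a KKT point; this is immediate from our convention that the KKT conditions are only stated where $f$ is differentiable. There is no real obstacle beyond this, so I would expect the full proof to be essentially a one-paragraph calculation, and it also explains transparently why the property fails for $\varepsilon > 0$: the complementary slackness becomes $(W_{ik}^*-\varepsilon)\,\partial f/\partial W_{ik} = 0$, so the weighting by $W_{ik}^*$ no longer annihilates the gradient terms and the cancellation collapses.
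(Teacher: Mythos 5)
Your proof is correct: summing the complementary slackness identities $W_{ik}^*\,\partial f/\partial W_{ik}=0$ over $k$ for fixed $i$ (and the analogous identities for $H^*$ over $k$ for fixed $j$), and using that $(W^*H^*)_{ij}>0$ whenever $V_{ij}>0$ at a point where $f$ is differentiable, is exactly the standard argument; the paper itself does not reproduce a proof but defers to~\cite[Theorem 1]{Ho2008}, which proceeds along the same lines. Your closing remark about why the cancellation breaks down for $\varepsilon>0$ is also accurate and matches the paper's comment that the property fails for the perturbed problem.
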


Let us define the following notion of a scaled pair $(W,H)$. 
\begin{definition}
\label{def:scalepoint}
We say $(W,H)$ is scaled if the optimal solution of the problem 
$ \min_{\alpha\in \mathbb{R}} D(V,\alpha WH)
$ 
is equal to 1. 
\end{definition} 
Hence $WH$ is scaled implies that one cannot multiply $WH$ by a  constant $\alpha$ (that is, one cannot scale $WH$) to reduce the error $KL(X,WH)$. 

By Proposition~\ref{prop:scaleproperty}, if $(W^*,H^*)$  is a KKT point of~\eqref{KLNMF} then $\sum_{i=1}^m \sum_{j=1}^n V_{ij} = \sum_{i=1}^m \sum_{j=1}^n (W^*H^*)_{ij}$ and hence is scaled. Moreover, we have the following result. 
\begin{proposition}
\label{prop:scalepoint}
A pair $(W,H)$ is scaled if and only if 
$$\sum_{i=1}^m \sum_{j=1}^n V_{ij} = \sum_{i=1}^m \sum_{j=1}^n (WH)_{ij}.$$
\end{proposition}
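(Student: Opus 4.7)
The plan is to solve the one-dimensional optimization in the definition of ``scaled'' in closed form, and then read off the claimed equivalence. Assuming $(W,H)$ lies in the effective domain (i.e., $V_{ij} = 0$ whenever $(WH)_{ij} = 0$, otherwise $D(V|\alpha WH) = +\infty$ for every $\alpha$ and the definition is vacuous), I would split the logarithm as $\log(\alpha (WH)_{ij}) = \log \alpha + \log (WH)_{ij}$ and isolate the $\alpha$-dependent part of the objective. This gives, for $\alpha > 0$,
\[
g(\alpha) \;:=\; D(V \,|\, \alpha WH) \;=\; \alpha \sum_{i,j}(WH)_{ij} \;-\; (\log \alpha) \sum_{i,j} V_{ij} \;+\; C,
\]
where $C$ collects all terms that do not depend on $\alpha$.

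Next I would observe that $g$ is strictly convex on $(0,\infty)$: the first term is linear in $\alpha$, and $\alpha \mapsto -\log \alpha$ is strictly convex. Hence $g$ has at most one stationary point on $(0,\infty)$, and such a point is necessarily the unique global minimizer. Setting
\[
g'(\alpha) \;=\; \sum_{i,j}(WH)_{ij} \;-\; \frac{1}{\alpha}\sum_{i,j} V_{ij} \;=\; 0
\]
yields the explicit minimizer $\alpha^{*} = \big(\sum_{i,j} V_{ij}\big)\big/\big(\sum_{i,j}(WH)_{ij}\big)$, assuming $\sum_{i,j}(WH)_{ij} > 0$.

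The conclusion is then immediate: by Definition~\ref{def:scalepoint}, $(W,H)$ is scaled iff $\alpha^{*}=1$, which by the formula above is equivalent to $\sum_{i,j}V_{ij} = \sum_{i,j}(WH)_{ij}$.

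I do not expect a substantive obstacle here; the only subtlety is administrative, namely handling the degenerate case where $\sum_{i,j}(WH)_{ij} = 0$ (then either $V = 0$, giving the trivial equality, or the minimization in the definition is $+\infty$ for every $\alpha$ and there is nothing to check) and the case where $V$ and $WH$ have mismatched zero patterns, which forces $g \equiv +\infty$ and again makes the definition vacuous. Under the natural assumption that $D(V|\alpha WH)$ is finite for some $\alpha$, the closed-form computation of $\alpha^{*}$ above gives the equivalence directly.
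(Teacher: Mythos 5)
Your proof is correct and follows essentially the same route as the paper: expand $D(V|\alpha WH)$ to isolate the $\alpha$-dependent part $\alpha\sum_{i,j}(WH)_{ij}-(\log\alpha)\sum_{i,j}V_{ij}$ and set the derivative in $\alpha$ to zero, obtaining $\alpha^{*}=\big(\sum_{i,j}V_{ij}\big)/\big(\sum_{i,j}(WH)_{ij}\big)$. Your added remarks on strict convexity and the degenerate cases are sound but only make explicit what the paper leaves implicit.
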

\begin{proof}
We have 
\begin{align*} 
D(V,\alpha WH)=\alpha\sum_{i,j}(WH)_{ij}-\sum_{i,j}V_{ij}\log(WH)_{ij}\\
\quad -\sum_{i,j}V_{ij}\log \alpha + \sum_{i,j} (V_{ij} \log V_{ij} - V_{ij}). 
\end{align*} 
The result follows from the equation $\nabla_\alpha D(V,\alpha WH)=0$.  \hfill{\ensuremath{\square}} 
\end{proof}
Proposition~\ref{prop:scalepoint}, although simple to prove, is not present explicitly in the literature. 
  However, it is rather interesting from a practical point of view: any \revise{feasible} solution $(W,H)$ to KL NMF can be improved simply by scaling it. 
  \revise{In fact, combining Definition~\ref{def:scalepoint} and 
  Proposition~\ref{prop:scalepoint}, we can compute 
  \[
  \alpha^*  
  = \argmin_{\alpha\in \mathbb{R}} D(V,\alpha WH) 
  = \frac{e^\top V e}{e^\top (WH) e}, 
  \] 
  and scale $W \; \leftarrow \; \alpha^* \, W $ 
to improve the feasible solution $(W,H)$. 
This could be used within any algorithm for KL NMF. }

\vspace{-0.1in}
\subsection{Relative smoothness}
\label{sec:RM}

In Section~\ref{sec:BMD}, we will propose a new algorithm that globally converges to a stationary point of~\eqref{KLNMF_perturbed} with $\varepsilon>0$, namely a block mirror descent method. 
An instrumental property of~\eqref{KLNMF_perturbed} to prove such a result is relative smoothness. Let us describe this property in details.  

The objective $D(V|WH)$ is convex in each block variable $W$ and $H$, but it is not jointly convex in $(W,H)$. 
Furthermore, $D(V|WH)$ does not possess the Lipschitz smoothness property, that is, the derivative of $D(V|WH)$ with respect to $W$ or $H$ is not Lipschitz continuous. 
Recently, the authors in \cite{Bauschke2017} and \cite{Lu2018} introduce the notion of relative smoothness that is a generalization of the Lipschitz smoothness. 
\begin{definition}\cite[Definition 1.1]{Lu2018}
\label{def:relativesmooth}
Let $\kappa(\cdot)$ be any
given differentiable convex function defined on a convex set $Q$. The \revise{convex} function $ g(\cdot)$ is $L$-smooth relative to $\kappa(\cdot)$ on $Q$ if for any $x, y \in {\rm int} Q$,
there is a scalar $L$ for which
\begin{equation}
\label{eq:RM} g(y) \leq g(x) + \langle \nabla g(x), y - x\rangle +  L \mathcal B_\kappa(y, x),
\end{equation}
where  
\begin{equation}
\mathcal B_\kappa(y,x):=\kappa(y)-\kappa(x)-\langle \nabla \kappa(x), y-x \rangle, \text{ for all } \, x,y\in Q.
\end{equation} 
\end{definition}


The following proposition shows that, when restricted to a column of $H$, the KL objective function is a relative smooth function. 
Since $D(V|WH)=D(V^\top |H^\top W^\top )$, this result  implies that 
the KL objective function when restricted to a row of $W$ is also relative smooth. 
\begin{proposition}\cite[Lemma 7]{Bauschke2017}
\label{prop:relativesmooth}
Let $v \in \mathbb{R}^m_+$ and $W \in \mathbb{R}^{m \times r}_+$, and 
\begin{equation}
\label{Dh}
 D(v|Wh):=\sum_{i=1}^m \Big ((Wh)_i - v_i \log (Wh)_i + v_i \log v_i  - v_i\Big). 
\end{equation} 
Then the function $h\mapsto D(v|Wh)$ is relative smooth to $\kappa(h)=-\sum_{j=1}^r\log h_{j} $ with the relative smooth constant $L=\|v\|_{1}$.
\end{proposition}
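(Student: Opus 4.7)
The plan is to use the now-standard characterization of relative smoothness due to Bauschke--Bolte--Teboulle: for convex differentiable $g$ and convex kernel $\kappa$ on an open convex set, inequality \eqref{eq:RM} holds globally if and only if $L\kappa-g$ is convex on the interior, which in the twice-differentiable case reduces to the Hessian inequality $\nabla^2 g(h)\preceq L\,\nabla^2 \kappa(h)$ for every $h$ in the (relative) interior. Restricting to $h>0$ with $Wh>0$ makes both $g(h):=D(v|Wh)$ and $\kappa(h)=-\sum_{j=1}^r\log h_j$ twice continuously differentiable and convex, so the task reduces to verifying this pointwise Hessian inequality with $L=\|v\|_1$.

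Next, I would compute the two Hessians explicitly. Writing $u_i=(Wh)_i=\sum_k W_{ik}h_k$ and letting $W_i\in\mathbb{R}^r$ denote the $i$-th row of $W$, a direct differentiation of \eqref{Dh} yields
\[
\nabla g(h)=\sum_{i=1}^m\Bigl(1-\tfrac{v_i}{u_i}\Bigr)W_i,\qquad \nabla^2 g(h)=\sum_{i=1}^m\frac{v_i}{u_i^2}\,W_i W_i^\top,
\]
while $\nabla^2\kappa(h)=\operatorname{diag}(h_j^{-2})$. So I must show, for every $z\in\mathbb{R}^r$,
\[
\sum_{i=1}^m \frac{v_i}{u_i^2}\bigl(W_i^\top z\bigr)^2\;\leq\;\|v\|_1\sum_{k=1}^r\frac{z_k^2}{h_k^2}.
\]

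The main step, which I would present as the crux of the argument, is a Jensen / Cauchy--Schwarz bound on $(W_i^\top z)^2$ tailored so that the factors $u_i$ cancel cleanly. Writing $W_i^\top z=u_i\sum_k \frac{W_{ik}h_k}{u_i}\cdot\frac{z_k}{h_k}$ and applying Jensen's inequality to the convex function $t\mapsto t^2$ with the probability weights $p_{ik}:=W_{ik}h_k/u_i$ (nonnegative, summing to $1$ over $k$), I get
\[
(W_i^\top z)^2 \;\leq\; u_i^2\sum_{k=1}^r \frac{W_{ik}h_k}{u_i}\cdot\frac{z_k^2}{h_k^2}\;=\;u_i\sum_{k=1}^r \frac{W_{ik}}{h_k}\,z_k^2.
\]
Dividing by $u_i^2$, multiplying by $v_i$, and summing over $i$ gives
\[
\sum_{i=1}^m \frac{v_i}{u_i^2}(W_i^\top z)^2 \;\leq\;\sum_{k=1}^r \frac{z_k^2}{h_k}\sum_{i=1}^m \frac{v_i W_{ik}}{u_i}.
\]

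The final observation is that $u_i=\sum_\ell W_{i\ell}h_\ell\geq W_{ik}h_k$ (with the convention that the $k$-th term is $0$ if $W_{ik}=0$), hence $W_{ik}/u_i\leq 1/h_k$, so $\sum_i v_i W_{ik}/u_i\leq \|v\|_1/h_k$. Plugging this into the previous display yields the required Hessian bound, establishing the convexity of $\|v\|_1\kappa-g$ on the positive orthant and therefore the relative smoothness of $h\mapsto D(v|Wh)$ with respect to $\kappa$ with constant $L=\|v\|_1$. The only nonroutine point is choosing the Jensen weights $p_{ik}=W_{ik}h_k/u_i$, which is precisely what makes the factor $u_i$ disappear; without this normalization, generic Cauchy--Schwarz bounds do not produce the clean constant $\|v\|_1$.
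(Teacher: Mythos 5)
Your proof is correct and follows essentially the same route as the cited source (the paper itself gives no proof, deferring to Lemma~7 of Bauschke--Bolte--Teboulle): reduce relative smoothness to the Hessian comparison $\nabla^2 g \preceq \|v\|_1 \nabla^2 \kappa$ and verify it via Jensen/Cauchy--Schwarz with the weights $W_{ik}h_k/(Wh)_i$, followed by the bound $W_{ik}/(Wh)_i \leq 1/h_k$. The computations of both Hessians and the two inequalities all check out, so nothing further is needed.
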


\vspace{-0.1in}
\subsection{Self-concordant properties} 
\label{sec:self-con}

In Section~\ref{sec:SN}, we will propose a new monotone algorithm  to solve~\eqref{KLNMF_perturbed}, namely a scalar Newton-type algorithm.  
An instrumental property of~\eqref{KLNMF_perturbed} to prove the  monotonicity is the self-concordant properties of its objective function.   
Let us first define a self-concordant function. We adopt the definition in \cite[Chapter 5]{NesterovLecture2018}. 
 
 Consider a closed convex function $g: \mathbb E \to \mathbb R$ with an \emph{open domain}. Fixing a point $x\in {\rm dom} (g)$ and a direction $d\in \mathbb E$, let $\varphi_x(\tau)=g(x+\tau d)$. We define 
$
 \mathbf D g(x)[d] = \varphi_x'(0)$, $\mathbf D^2 g(x)[d,d]=\varphi_x''(0)$,  and $ \mathbf D^3 g(x)[d,d,d]=\varphi_x'''(0).
$

\begin{definition}
 We say the function $g$ belongs to the class $\mathcal{F}_{M}$ of self-concordant functions with parameter $M\geq0$, if 
 \[ \big|\mathbf D^3 g(x)[d,d,d]\big|\leq 2 M \|d\|^3_{\nabla^2 g(x)},
 \]
 where $\|d\|^2_{\nabla^2 g(x)}=\langle \nabla^2 g(x)d,d\rangle=\varphi_x''(0)$.
  The function $g$ is called standard self-concordant when $M=1$.
\end{definition}
The scalar function $g(x)=-\log(x)$ is a standard self-concordant function; see \cite[Example 5.1.1]{NesterovLecture2018}. 
The following proposition provides some useful properties to determine the self-concordant constant of a function.  
\begin{proposition}\cite[Theorems 5.1.1 and 5.1.2]{NesterovLecture2018} 
\label{prop:selfconcordant} 
~

(i) Let $g_1, g_2$ be self-concordant functions with constants $M_1, M_2$. Then the function $g(x)=\alpha g_1(x) + \beta g_2(x)$, where $\alpha, \beta$ are positive constants, is self-concordant with constant 
$M_g$$=$$\max\big\{\frac{1}{\sqrt{\alpha}} M_1, \frac{1}{\sqrt{\beta}}M_2 \big\}$. 

(ii) If $g(\cdot)$ is self-concordant with constant $M_g$, then $\phi(x)=g(\mathcal A(x))$, where $\mathcal A(x)=Ax+b$ is a linear operator, is also self-concordant with constant $M_\phi=M_g$. 
\end{proposition}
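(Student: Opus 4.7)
The plan is to verify the defining inequality $|\mathbf{D}^3 g(x)[d,d,d]| \leq 2M \|d\|^3_{\nabla^2 g(x)}$ in each case by direct computation of the relevant directional derivatives along the line $\tau \mapsto x + \tau d$. Part (ii) will follow immediately from the chain rule, while part (i) will reduce to an elementary $3/2$-power inequality that I flag as the only non-trivial step.

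For (ii), I would set $y = \mathcal{A}(x) = Ax + b$ and $\varphi_x(\tau) = \phi(x+\tau d) = g(y + \tau A d)$. Differentiating three times in $\tau$ at $\tau=0$ gives $\mathbf{D}^3 \phi(x)[d,d,d] = \mathbf{D}^3 g(y)[Ad, Ad, Ad]$, while the second derivative calculation yields $\|d\|^2_{\nabla^2 \phi(x)} = \langle A^\top \nabla^2 g(y)\, Ad, d\rangle = \|Ad\|^2_{\nabla^2 g(y)}$. Applying self-concordance of $g$ to the direction $Ad$ at $y$ then produces
\begin{equation*}
|\mathbf{D}^3 \phi(x)[d,d,d]| \;\leq\; 2 M_g \|Ad\|^3_{\nabla^2 g(y)} \;=\; 2 M_g \|d\|^3_{\nabla^2 \phi(x)},
\end{equation*}
which delivers $M_\phi = M_g$ with no loss in the constant.

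For (i), linearity of directional derivatives gives $\mathbf{D}^3 g(x)[d,d,d] = \alpha \mathbf{D}^3 g_1(x)[d,d,d] + \beta \mathbf{D}^3 g_2(x)[d,d,d]$ and $\|d\|^2_{\nabla^2 g(x)} = \alpha \|d\|^2_{\nabla^2 g_1(x)} + \beta \|d\|^2_{\nabla^2 g_2(x)}$. Writing $a = \|d\|^2_{\nabla^2 g_1(x)}$ and $b = \|d\|^2_{\nabla^2 g_2(x)}$, the triangle inequality and the two hypotheses yield
\begin{equation*}
|\mathbf{D}^3 g(x)[d,d,d]| \;\leq\; 2\alpha M_1 a^{3/2} + 2\beta M_2 b^{3/2} \;=\; \tfrac{2 M_1}{\sqrt{\alpha}}(\alpha a)^{3/2} + \tfrac{2 M_2}{\sqrt{\beta}}(\beta b)^{3/2}.
\end{equation*}
Bounding both coefficients uniformly by $2 M_g = 2\max\{M_1/\sqrt{\alpha}, M_2/\sqrt{\beta}\}$ reduces the claim to verifying $(\alpha a)^{3/2} + (\beta b)^{3/2} \leq (\alpha a + \beta b)^{3/2}$.

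The main (and only non-routine) obstacle is this last $3/2$-power inequality. I would handle it by normalizing: for $u, v \geq 0$ with $u+v = 1$ we have $u, v \in [0,1]$, hence $u^{3/2} \leq u$ and $v^{3/2} \leq v$, so $u^{3/2} + v^{3/2} \leq 1 = (u+v)^{3/2}$; the general case follows by homogeneity of degree $3/2$ after dividing by $(u+v)$. Chaining these pieces gives $|\mathbf{D}^3 g(x)[d,d,d]| \leq 2 M_g \|d\|^3_{\nabla^2 g(x)}$, completing both parts.
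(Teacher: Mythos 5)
Your proof is correct. The paper does not prove this proposition itself but cites it from Nesterov's lecture notes, and your argument—chain rule for the affine substitution in (ii), and for (i) the reduction to the superadditivity $u^{3/2}+v^{3/2}\leq (u+v)^{3/2}$ after rescaling by $\sqrt{\alpha}$ and $\sqrt{\beta}$—is exactly the standard proof of Theorems 5.1.1 and 5.1.2 in that reference.
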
 
Using Proposition \ref{prop:selfconcordant}, 
we can prove that the objective of the perturbed KL NMF problem~\eqref{KLNMF_perturbed} is 
 self-concordant with respect to a single entry of $W$ and   $H$. 
\begin{proposition}
\label{prop:self-conc-NMF} 
Given $V \in \mathbb{R}^{m \times n}_+$, $W \in \mathbb{R}^{m \times r}_+$ and $H \in \mathbb{R}^{r \times n}_+$, 
the scalar function 
\begin{align*}
 H_{kj} \mapsto D(V|WH) = \sum_{i=1}^m \Big(\sum_{a=1}^n W_{ia}H_{aj}-V_{ij}\log \sum_{a=1}^n W_{ia}H_{aj}\Big) + {\rm constant} 
 \end{align*}
 is a self-concordant function with constant $
\mathbf c_{H_{kj}}=\max_{\{i  |  V_{ij}>0\}}\Big\{\frac{1}{\sqrt{V_{ij}}}\Big\}.$
\end{proposition}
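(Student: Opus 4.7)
The plan is to decompose the objective as a function of the scalar $H_{kj}$ (with all other entries of $W$ and $H$ held fixed) into elementary self-concordant pieces, and then assemble the overall constant via Proposition~\ref{prop:selfconcordant}. For each $i \in [m]$, write $(WH)_{ij} = W_{ik}H_{kj} + C_i$, where $C_i := \sum_{a \ne k} W_{ia}H_{aj}$ does not depend on $H_{kj}$. This lets me express
$$f(H_{kj}) \;=\; \sum_{i=1}^{m}\bigl(W_{ik}H_{kj}+C_i\bigr) \;-\; \sum_{\{i\,|\,V_{ij}>0\}} V_{ij}\,\log\!\bigl(W_{ik}H_{kj}+C_i\bigr) \;+\; \text{const},$$
so the terms with $V_{ij}=0$ drop out of the logarithmic part and only the $V_{ij}>0$ terms will contribute to the self-concordance constant.

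The first sum is affine in $H_{kj}$, so its third derivative vanishes identically; it is self-concordant with constant $0$ and will not affect the eventual maximum. For each $i$ in the second sum, the scalar function $t \mapsto -\log t$ is standard self-concordant on $(0,\infty)$ by \cite[Example 5.1.1]{NesterovLecture2018}, and precomposing with the affine map $H_{kj} \mapsto W_{ik}H_{kj}+C_i$ preserves the self-concordance constant $M=1$ by Proposition~\ref{prop:selfconcordant}(ii). Multiplying the result by the positive scalar $V_{ij}$ and invoking Proposition~\ref{prop:selfconcordant}(i) with $\alpha = V_{ij}$, the single summand $H_{kj} \mapsto -V_{ij}\log(W_{ik}H_{kj}+C_i)$ is self-concordant with constant $1/\sqrt{V_{ij}}$.

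Finally, iterating the additive rule in Proposition~\ref{prop:selfconcordant}(i) across all $i$ with $V_{ij}>0$, and absorbing the affine piece (whose constant can be taken as $0$), the overall self-concordance constant of $H_{kj}\mapsto D(V|WH)$ is the $\max$ over all active indices,
$$\mathbf{c}_{H_{kj}} \;=\; \max_{\{i\,|\,V_{ij}>0\}} \frac{1}{\sqrt{V_{ij}}},$$
which is the claimed value. There is no real obstacle here; the only subtle point is tracking the rescaling in Proposition~\ref{prop:selfconcordant}(i), which yields $1/\sqrt{V_{ij}}$ rather than $V_{ij}$ because the Hessian scales linearly in $\alpha$ while the third derivative scales by $\alpha$ and the norm $\|\cdot\|_{\nabla^2 g(x)}^{3}$ scales by $\alpha^{3/2}$, so the ratio that defines the self-concordance constant scales by $1/\sqrt{\alpha}$.
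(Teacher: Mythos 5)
Your proof is correct and follows exactly the route the paper intends: the paper gives no explicit proof but states that the result follows from Proposition~\ref{prop:selfconcordant}, i.e., writing the scalar function as an affine term plus $\sum_{\{i\,\mid\,V_{ij}>0\}} V_{ij}\bigl(-\log(W_{ik}H_{kj}+C_i)\bigr)$, using affine invariance of the standard self-concordance of $-\log$, and the scaling/sum rule to obtain $\max_{\{i\,\mid\,V_{ij}>0\}} 1/\sqrt{V_{ij}}$. Your explanation of why the constant scales as $1/\sqrt{\alpha}$ is also accurate, so nothing is missing.
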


\vspace{-0.1in}
\section{Existing algorithms}
\label{sec:sota}

In this section, we briefly review the most efficient algorithms for KL NMF.

\vspace{-0.1in}
\subsection{Multiplicative updates}
\label{sec:MU}

Let us consider the linear regression problem: 
 Given $v \in \mathbb{R}^m_+$ and $W \in \mathbb{R}^{m \times r}_+$,  
  \[
 {\rm minimize}_{h \in \mathbb{R}^{r}_+} \; D_{\text{KL}}(v,Wh) .  
\]  
For $W$ fixed in KL NMF, this is the subproblem to be solved for each column of $H$. The multiplicative updates (MU) for solving this problem are given by 
 \[
 h \; \leftarrow \; h \circ \left(\frac{W^\top \frac{[v]}{[Wh]}}{W^\top ee^\top} \right),  
 \] 
 where $\circ$ and $\frac{[\cdot]}{[\cdot]}$ are the component-wise product and division between two matrices, respectively. 
 They  were derived by 
 Richardson~\cite{richardson1972bayesian} and 
 Lucy~\cite{lucy1974iterative}, who used them for image restoration, and rectification and deconvolution in statistical astronomy, respectively. 
 This algorithm was later referred to as the Richardson-Lucy algorithm, and is guaranteed to decrease the KL NMF objective function.  
 
In the context of NMF, Lee and Seung~\cite{Lee99} derived these updates again, in a matrix form, to update $W$ and $H$ alternatively in KL NMF. 
The MU can be easily generalized to the perturbed KL NMF 
problem~\eqref{KLNMF_perturbed}~\cite{takahashi2014global},  
and are given by 
\begin{equation} \label{eq:MUforH}
H  
\leftarrow 
\max\left(\varepsilon ,  
H  \circ 
\frac{
\big[ W^\top \frac{[V]}{[WH]}  \big]
}
{
\big[  W^\top ee^\top  \big] 
}
\right), 
W  
\leftarrow  
\max\left(\varepsilon , 
W  \circ 
\frac{
\big[ \frac{[V]}{[WH]} H  \big]
}
{
\big[  ee^\top H  \big]
} 
\right).  
\end{equation}   
 Note that, for $\varepsilon = 0$, the $\max(\varepsilon,\cdot)$ can be removed since the entries of $V$, $W$ and $H$ are nonnegative, which corresponds to the MU used by Lee and Seung.

The MU can be derived using the majorization-minimization (MM) framework, which is a two-step approach: 
\begin{enumerate}
\item majorization: find a majorizer, that is, 
a function that is equal to the objective function at the current iterate while being larger everywhere else on the feasible domain,  and 
\item minimization: minimize the majorizer to obtain the next iterate.  
\end{enumerate}  
We refer the interested reader to~\cite{fevotte2009nonnegative} for all the details. Moreover, the MU can be shown to belong to a specific MM framework, namely, the block successive upper-bound minimization (BSUM)  framework~\cite{hong2015unified,Razaviyayn2013}. 
For completeness, we describe the BSUM framework and its convergence guarantees in Appendix~\ref{app:bsum}. 
This allows to provide convergence guarantees for the MU. Let us anaylze two cases separately.


\subsubsection{Case $\varepsilon=0$}  

In this case, 
the MU are not well-defined if $(WH)_{ij}=0$ for some $(i,j)$. Furthermore, the MU would encounter a zero locking phenomenon, that is, the MU cannot modify an entry of $W$ or $H$ when it is equal to 0. 
This phenomenon can be fixed by choosing an initial pair with  strictly positive entries.  
Moreover, the objective function is not directionally differentiable when $(WH)_{ij}=0$ and $V_{ij}>0$ for some $i,j$. 
Hence, the convergence to stationary points obtained in \cite[Theorem~2]{Razaviyayn2013} for BSUM does not apply. 
In fact, \cite{GZ2005}~provided a numerical evidence that the generated sequence may not converge to a KKT point, and, 
\cite[Section 6.2]{chi2012tensors} gave an example that MU may converge to a non KKT point. 
Although the convergence of the generated sequence by MU is not guaranteed, it is worth noting that MU in this case possesses an interesting scale-invariant property. 
\begin{proposition}
\label{prop:scaleMU} 
Let $\varepsilon=0$ and 
denote $H^+$ (resp.\ $W^+$) the update of $H$ (resp.\ $W$) after applying one MU~\eqref{eq:MUforH} of $H$ (resp.~of $W$) on $(W,H)$. Suppose the MU for $H$ (resp.\ $W$) is well-defined, that is, $(WH)_{ij}>0$ for all $i,j$ and $W$ has no zero column (resp.\ $H$ has no zero row). 
Then the MU of $H$ preserve the column sum of $V$, that is, 
$e^\top V = e^\top (WH^+) $, 
while the MU of $W$ preserve the row sum of $V$, that is,  
$ Ve =  (W^+ H) e$. 
\end{proposition}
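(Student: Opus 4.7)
The plan is to establish both equalities by direct computation, exploiting the fact that the denominators appearing in the MU are precisely the row/column sums of $W^\top$ and $H$ which cancel against the outer summation we perform. I would handle the $H$-update first, then invoke the symmetric argument $D(V|WH)=D(V^\top|H^\top W^\top)$ for the $W$-update.

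First, I would write out the $(k,j)$ entry of the update as
\[
H^+_{kj}=H_{kj}\cdot\frac{\sum_{i=1}^m W_{ik}\,V_{ij}/(WH)_{ij}}{\sum_{i=1}^m W_{ik}},
\]
which is well-defined by the hypothesis $(WH)_{ij}>0$ for all $i,j$ and the hypothesis that $W$ has no zero column (so the denominator is nonzero). Note that the zero-locking phenomenon is not an issue here because we only need one step of the update to be well-defined.

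Second, I would compute the $j$-th column sum $\sum_{i=1}^m (WH^+)_{ij}=\sum_{k=1}^r H^+_{kj}\sum_{i=1}^m W_{ik}$. Substituting the expression above, the factor $\sum_i W_{ik}$ in the denominator of $H^+_{kj}$ cancels, giving
\[
\sum_{i=1}^m (WH^+)_{ij}=\sum_{k=1}^r H_{kj}\sum_{i=1}^m W_{ik}\frac{V_{ij}}{(WH)_{ij}}=\sum_{i=1}^m\frac{V_{ij}}{(WH)_{ij}}\sum_{k=1}^r W_{ik}H_{kj},
\]
where I swap the order of summation. The inner sum is exactly $(WH)_{ij}$, which cancels and leaves $\sum_{i=1}^m V_{ij}$. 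This yields $e^\top(WH^+)=e^\top V$.

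Third, the claim for the $W$-update follows by applying the same reasoning to the transpose problem: the MU for $W$ in $V\approx WH$ is precisely the MU for $H^\top$ in $V^\top\approx H^\top W^\top$, and the column-sum-preservation of the transposed problem translates back into row-sum-preservation of the original. Since the argument is a purely algebraic cancellation, there is no real obstacle; the only care needed is to track which summation index plays the role of the outer sum and to justify that the denominators in the MU do not vanish (which is ensured by the stated hypotheses on $W$ and $WH$, respectively $H$ and $WH$, for each update).
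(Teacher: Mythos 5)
Your proposal is correct and follows essentially the same route as the paper's proof: a direct computation of the column sum $\sum_i (WH^+)_{ij}$, cancelling the denominator $\sum_i W_{ik}$ against the outer sum, then swapping summation order so that $\sum_k W_{ik}H_{kj}=(WH)_{ij}$ cancels, with the $W$-update handled by symmetry. The only cosmetic issue is the reuse of the index $i$ for two distinct bound variables, which does not affect the validity of the argument.
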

\begin{proof}
We prove the result for $H$, the result for $W$ can be obtained by symmetry.  
We have for $j \in [n]$ that 
\begin{align*}
\sum_{i=1}^m(WH^+)_{ij}&=\sum_{i=1}^m\sum_{k=1}^r W_{ik}H^+_{kj}=\sum_{i=1}^m\sum_{k=1}^r  W_{ik}H_{kj}\frac{\sum_{l=1}^m W_{lk}V_{lj}/(WH)_{lj}}{\sum_{l=1}^m W_{lk}}\\
&=\sum_{k=1}^r  \sum_{i=1}^m W_{ik}H_{kj}\frac{\sum_{l=1}^m W_{lk}V_{lj}/(WH)_{lj}}{\sum_{l=1}^m W_{lk}}=\sum_{k=1}^r H_{kj}\sum_{l=1}^m  \frac{W_{lk}V_{lj}}{(WH)_{lj}} \\
&=\sum_{l=1}^m \sum_{k=1}^r V_{lj} \frac{W_{lk}H_{kj}}{(WH)_{lj}}=\sum_{l=1}^m V_{lj}. \qquad\qquad\qquad\qquad\qquad\qquad\qquad\quad\qed 
\end{align*}  
\end{proof}
Proposition \ref{prop:scaleMU} shows that any iterate of the MU, except the first one, is scaled (Definition \ref{def:scalepoint}).  
Hence, sometimes in the literature, the MU are used to scale 
the pair $(W,H)$ within KL NMF algorithms.

\subsubsection{Case $\varepsilon>0$}

For this case, \cite[Theorem 2]{Razaviyayn2013} implies that every limit point of the generated sequence of MU for solving Problem~\eqref{KLNMF_perturbed} with $\varepsilon>0$ is a stationary point  of Problem~\eqref{KLNMF_perturbed} (which is also a KKT point, see Section \ref{sec:KKT}).  
It is worth noting that the sub-sequential convergence of MU in this case can also be proved by using Zangwill's convergence theory as in \cite{takahashi2014global}.

\vspace{-0.1in}
\subsection{ADMM} \label{sec:ADMM}

The alternating direction method of multiplier (ADMM) is a standard technique to tackle low-rank matrix approximation problems~\cite{huang2016flexible}, and it was used to solve KL NMF~\eqref{KLNMF} in~\cite{Sun2014}. 
The first step is to reformulate~\eqref{KLNMF} as 
\begin{equation}
\label{eq:ADMM}
\begin{split}
\min_{W,H,W_+,H_+} &\quad D(V | \hat V) \\
\text{such that } & \hat V=WH, W=W_+, H=H_+, W_+ \geq 0, H_+ \geq 0, 
\end{split}
\end{equation}
where $\hat V$, $W_+$ and $H_+$ are auxiliary variables. 
ADMM alternately minimizes the augmented Lagrangian of~\eqref{eq:ADMM} over the variables $(\hat V,W,H,W_+,H_+)$, and updates the dual variables at each iteration. 
We refer the reader 
to~\cite{Sun2014} for more details. 
It is important noting that the objective is not monotonically decreasing under the updates of ADMM. Also, convergence to stationary points is not guaranteed. 
In fact, we will see in the numerical experiments that ADMM does not converge in some cases.

\vspace{-0.1in}
\subsection{Primal-dual approach}
\label{sec:PD}

In~\cite{Yanez2017}, Yanez and Bach proposed a first-order primal-dual (PD) algorithm for KL NMF~\eqref{KLNMF}.
PD employs the primal-dual method proposed in \cite{ChambolleP11} to tackle the convex subproblems in the columns of $H$ and rows of $W$; see \eqref{Dh}. 
To improve the performance of the primal-dual method, PD uses an automatic heuristic selection for the step sizes; see~\cite{Yanez2017} for the details. 
PD is a heuristic algorithm, and it does not guarantee the monotonicity of the objective.

\vspace{-0.1in}
\subsection{A cyclic coordinate descent method}
\label{sec:CCD}

In~\cite{Hsieh2011}, 
Hsieh and Dhillon proposes to solve KL NMF~\eqref{KLNMF} using a cyclic coordinate descent (CCD) method. 
CCD alternately updates the scalars $W_{ik}$ or $H_{kj}$. 
  The subproblems in one variable are solved by a Newton method, in which a full Newton step is used, without line search. 
 Again, there is no convergence guarantee for this algorithm, 
although the Newton method used to approximately solve the subproblems in one variable may have some convergence guarantee, if properly tuned. 
In fact, we realized that there is a gap in the convergence proof of the Newton method without line search to the global minimum of the scalar subproblems~\cite[Theorem 1]{Hsieh2011}: Equation~(29) is not correctly computed which makes the proof incorrect.
 This observation motivated us to introduce a new scalar Newton-type algorithm in Section~\ref{sec:SN}. 
\revise{Note that a similar algorithm was independently developed in~\cite{lin2020optimization}, along with an R package. Another R package is available that implements both 
CCD and MU~\cite{unpublishedCarbonetto}.}  

%

\vspace{-0.1in}
\subsection{Two other algorithmic approaches}

In \cite{Li2012}, Li, Lebanon and Park proposed a Taylor series expansion to express Bregman divergences in term of Euclidean distances, leading to a heuristic scalar coordinate descent algorithm for solving NMF problems with Bregman divergences. 
The algorithm using this approach for solving KL NMF underperforms the CCD method from~\cite{Hsieh2011}, and hence we will not compare it in the numerical experiment section. 

More recently, Kim, Kim and Klabjan~\cite{kim2019scale} proposed a scale invariant power iteration (SCI-PI) algorithm to solve a class of scale invariant problems, and apply it to KL NMF~\eqref{KLNMF}. 
To establish convergence results for SCI-PI (see \cite[Theorem 9, 11]{kim2019scale}), the objective function of the scale invariant problem needs to be twice continuously differentiable on an open set containing the set $\{x: \|x\|=1\}$. However, the objective function of the corresponding sub-problem (see \cite[Lemma 14]{kim2019scale}) when applying SCI-PI to KL NMF violates this condition. Therefore, the theory of SCI-PI in \cite{kim2019scale} does not apply to KL NMF.

\vspace{-0.1in}
\section{New members of the algorithm collection}
\label{sec:newmember}

In this section, we present two new algorithms for KL NMF, a block mirror descent method in Section~\ref{sec:BMD}, 
and a new scalar Newton-type algorithm in Section~\ref{sec:SN}.  

\vspace{-0.1in}
\subsection{Block Mirror Descent Method }
\label{sec:BMD}

Let us first give details on the analysis of the block mirror descent (BMD) method (Section~\ref{sec:bmdtheory}), and then apply the result to solve the perturbed KL NMF problem~\eqref{KLNMF_perturbed} (Section~\ref{sec:bmdklnmd}).

\subsubsection{BMD method} \label{sec:bmdtheory}

The standard gradient descent (GD) scheme for solving the smooth optimization problem $\min_{x\in Q} g(x)$, with $g$ being $L$-smooth,  uses the following classical update
\begin{equation}
\label{eq:GD}
x^{k+1}=\arg\min_{x\in Q} 
\left\{g(x^k)+\langle \nabla g(x^k),x\text{$-$}x^k\rangle 
+ \frac{L}{2}\|x\text{$-$}x^k\|_2^2 \right\}. 
\end{equation}
When $g(\cdot)$ is not $L$-smooth but $L$-relative smooth to $\kappa(\cdot)$ (see Definition~\ref{def:relativesmooth}), 
the GD scheme can be generalized by replacing the Euclidean norm in \eqref{eq:GD} by the Bregman divergence $\mathcal B$, leading to the following mirror descent step 
\begin{equation}
\label{eq:MD}
x^{k+1}=\arg\min_{x\in Q} \{g(x^k)+\langle \nabla g(x^k),x-x^k\rangle + L\mathcal B_\kappa(x,x^k)\}.
\end{equation}
Let us now present BMD, that uses the above generalized GD scheme, but updating the variable block by block. For that, let us consider a problem of the form 
\begin{equation} \label{eq:compositev2}
\min_{ x \in \mathcal X} f(x), 
\end{equation}  
where $f$ is continuously differentiable on $\mathcal X$, $x := (x_1,x_2,\dots,x_s)$ and  $\mathcal X = \mathcal X_1 \times \mathcal X_2 \times \dots \times \mathcal X_s$ with $\mathcal X_i$ ($i=1,2,\dots,s$) being closed convex sets. 
Let us also make the following assumption. 
\begin{assumption}
\label{assump:RelativeSmooth}
~

(i) For all $\bar{x} \in \mathcal X$, the function 
$
x_i \mapsto f(\bar{ x}_1,\ldots,\bar{ x}_{i-1},x_i,\bar{ x}_{i+1},\ldots,\bar{ x}_s) 
$ 
is convex and relative smooth to $\kappa_i(x_i)$ with constant $L_i(\bar{ x}_1,\ldots,\bar{ x}_{i-1},\bar{ x}_{i+1},\ldots,\bar{ x}_s)$. 

(ii) There exist positive constants $\underline{L}_i$ and $\overline{L}_i$ such that 
\[
\underline{L}_i \leq L_i(\bar{ x}_1,\ldots,\bar{ x}_{i-1},\bar{ x}_{i+1},\ldots,\bar{ x}_s) \leq \overline{L}_i.
\]  
\end{assumption}
\noindent BMD for solving Problem~\eqref{eq:compositev2}  is described in Algorithm \ref{alg:BMD}.
\begin{algorithm}[ht!]
\caption{Block Mirror Descent for solving \eqref{eq:compositev2}} \label{alg:BMD}
\begin{algorithmic}[1]
\STATE Choose an initial point $x^{0}$ such that $x_i^0 \in {\rm int\, dom} \mathcal X_i$ for $i=1,\ldots,s$.
\FOR{$k=1,\ldots$}
\FOR{$i=1,\ldots,s$}
\STATE Let $f^k_i(x_i)=f\big(x_1^{k+1},\ldots,x_{i-1}^{k+1},x_i,x_{i+1}^k,\ldots,x_s^k\big)$.
\STATE Update 
\begin{equation}
\label{eq:BMDstep}
x^{k+1}_i = \arg\min_{x_i\in \mathcal X_i} \big\{f^k_i(x_i^k) +   \langle \nabla f^k_i(x_i^k),x_i-x_i^k \rangle + L_i^{k} \mathcal B_{\kappa_i}(x_i,x_i^k)\big\},
\end{equation}
where $L_i^{k}=L_i(x_1^{k+1},\ldots,x_{i-1}^{k+1},x_{i+1}^k,\ldots,x_s^k\big)$.
\ENDFOR
\ENDFOR
\end{algorithmic}
\end{algorithm} 

BMD described in Algorithm \ref{alg:BMD} is structurally identical to the block Bregman proximal gradient (BBPG) method presented in \cite{Teboulle2020} with cyclic update. 
However, BMD has an improvement over BBPG: it uses the step size $1/L_i^{k}$ which is larger than $\rho/L_i^{k}$ used in BBPG,  as $\rho \in (0,1)$. 
Our convergence analysis of Algorithm \ref{alg:BMD} (Theorem~\ref{thm:BMDconvergence} below, see its proof in Appendix~\ref{app:BMDconvergence}) is an extension of the primal gradient scheme (which is BMD for $s=1$) analysed in \cite{Lu2018}. It is worth noting that our result can be extended to composite optimization with essentially cyclic regime by using the technique in \cite[Section A.2]{Lu2018} and \cite[Section 2]{Teboulle2020}. In this paper, we only present the result of BMD with cyclic regime for~\eqref{eq:compositev2} to simplify the presentation.

\begin{theorem}
\label{thm:BMDconvergence}
Suppose Assumption \ref{assump:RelativeSmooth} is satisfied. Let $\{x^k\}$ be the sequence generated by Algorithm \ref{alg:BMD}. 
Let also $\Phi(x)= f (x)+ \sum_i I_{\mathcal X_i}(x_i)$, where $I_{\mathcal X_i}$ is the indicator function of $\mathcal X_i$.  We have 

(i) $\Phi \big(x^k\big)$ is non-increasing;

(ii) Suppose $\mathcal X_i \subset {\rm int\, dom} \, \kappa_i$.  If   $\{x^k\}$ is bounded and $\kappa_i(x_i)$ is strongly convex on bounded subsets of $\mathcal X_i$ that contain $\{x_i^k\}$, 
then every limit point of $\{x^k\}$  is a critical point of $\Phi$;

(iii)  If together with the conditions  in (ii) we assume that  $\nabla \kappa_i$ and $\nabla_i f$ are Lipschitz continuous on bounded subsets of $\mathcal X_i$ that contain $\{x_i^k\}$, 
then the whole sequence  $\{x^k\}$  converges to a critical point of $\Phi$. 

\end{theorem}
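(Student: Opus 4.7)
The proof plan splits into the three parts (i)--(iii) of the statement, each built on the preceding one.

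For (i), I would start from the subproblem \eqref{eq:BMDstep}. Writing the first-order optimality condition at $x_i^{k+1}$, pairing it with $x_i^k - x_i^{k+1}$, and using the three-point identity $\langle \nabla \kappa_i(x_i^{k+1}) - \nabla \kappa_i(x_i^k),\, x_i^k - x_i^{k+1} \rangle = -\mathcal{B}_{\kappa_i}(x_i^{k+1}, x_i^k) - \mathcal{B}_{\kappa_i}(x_i^k, x_i^{k+1})$ yields $\langle \nabla f_i^k(x_i^k),\, x_i^{k+1} - x_i^k \rangle \leq -L_i^k\, \mathcal{B}_{\kappa_i}(x_i^{k+1}, x_i^k)$. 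Combined with relative smoothness of $f_i^k$ with respect to $\kappa_i$, this produces the block descent $f_i^k(x_i^{k+1}) \leq f_i^k(x_i^k) - L_i^k\, \mathcal{B}_{\kappa_i}(x_i^k, x_i^{k+1})$. Since only the $i$-th coordinate changes in the $i$-th block update, summing over $i=1,\ldots,s$ telescopes to $\Phi(x^{k+1}) \leq \Phi(x^k) - \sum_i L_i^k\, \mathcal{B}_{\kappa_i}(x_i^k, x_i^{k+1})$, and (i) follows from $\mathcal{B}_{\kappa_i} \geq 0$.

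For (ii), telescoping the descent from (i) together with lower boundedness of $\Phi$ on the bounded sequence gives $\sum_k \sum_i L_i^k\, \mathcal{B}_{\kappa_i}(x_i^k, x_i^{k+1}) < \infty$; since $L_i^k \geq \underline{L}_i > 0$, this forces $\mathcal{B}_{\kappa_i}(x_i^k, x_i^{k+1}) \to 0$. Strong convexity of $\kappa_i$ on the bounded subset containing $\{x_i^k\}$ upgrades this to $\|x_i^{k+1} - x_i^k\| \to 0$ for each block $i$. Fix an accumulation point $x^*$ along a subsequence $x^{k_j} \to x^*$; the norm convergence of successive differences implies $x^{k_j+1} \to x^*$ as well. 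Passing to the limit in the optimality condition of \eqref{eq:BMDstep} (using continuity of $\nabla \kappa_i$ and $\nabla_i f$, together with the uniform bound $L_i^k \leq \overline{L}_i$) gives $\langle \nabla_i f(x^*),\, y_i - x_i^* \rangle \geq 0$ for all $y_i \in \mathcal{X}_i$ and every $i$, which is precisely the criticality condition $0 \in \partial \Phi(x^*)$.

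For (iii), the extra Lipschitz hypotheses allow me to trade Bregman distances for Euclidean ones. Strong convexity of $\kappa_i$ sharpens (i) into a standard sufficient decrease $\Phi(x^k) - \Phi(x^{k+1}) \geq c\, \|x^{k+1} - x^k\|^2$, while Lipschitz continuity of $\nabla \kappa_i$ and $\nabla_i f$ converts the optimality of \eqref{eq:BMDstep} into a subgradient bound of the form $\mathrm{dist}(0, \partial \Phi(x^{k+1})) \leq C\, \|x^{k+1} - x^k\|$. With these two ingredients I would invoke the abstract Kurdyka--{\L}ojasiewicz (KL) convergence scheme of Attouch--Bolte--Svaiter (in its block version, as in the BBPG analysis of \cite{Teboulle2020}) to conclude $\sum_k \|x^{k+1} - x^k\| < \infty$, hence $\{x^k\}$ is Cauchy and converges to the critical point identified in (ii).

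The main obstacle lies in (iii): the subgradient bound is delicate because, in the cyclic scheme, $\nabla_i f$ inside the $i$-th subproblem is evaluated at the half-updated point $(x_1^{k+1},\ldots,x_{i-1}^{k+1},x_i^k,x_{i+1}^k,\ldots,x_s^k)$, whereas the joint subgradient at $x^{k+1}$ requires $\nabla_i f(x^{k+1})$. Bridging this mismatch is exactly where global Lipschitz continuity of each $\nabla_i f$ is used, after a telescoping in $i$ that produces the required bound by $\|x^{k+1}-x^k\|$. A secondary issue is whether the KL inequality at accumulation points is hypothesized or inferred, e.g.\ from a semi-algebraic structure of $f$ and the $\kappa_i$; this must be stated carefully since it is not explicit in the theorem, and would be resolved in the KL NMF application of Section~\ref{sec:bmdklnmd} where both $D(V|WH)$ and $\kappa_i(\cdot) = -\sum_j \log(\cdot)_j$ are semi-algebraic (or definable) on the constraint set of \eqref{KLNMF_perturbed} with $\varepsilon>0$.
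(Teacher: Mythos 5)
Your proposal is correct and follows essentially the same route as the paper: the three-point property combined with relative smoothness gives the per-block descent for (i), strong convexity of $\kappa_i$ turns the summable Bregman distances into $\|x^{k+1}-x^k\|\to 0$ and a limit-point criticality argument for (ii), and the sufficient-decrease plus subgradient-bound conditions feed the Attouch--Bolte--Svaiter KL scheme for (iii), with the gradient mismatch at the half-updated point bridged by the Lipschitz hypotheses exactly as in the paper. Your observation that the KL property of $\Phi$ is required by the abstract scheme yet not explicit in the theorem statement is accurate --- the paper leaves it implicit as condition (B3) of its auxiliary theorem and resolves it only in the application.
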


Let us now apply this new algorithm and convergence result to KL NMF.

\subsubsection{BMD for KL NMF} \label{sec:bmdklnmd}

Problem~\eqref{KLNMF_perturbed} has the form of Problem~\eqref{eq:compositev2}. Proposition~\ref{prop:relativesmooth} allows us to apply BMD to solve~\eqref{KLNMF_perturbed}, 
where the blocks of variables are the columns of $H$ and the rows of $W$. 
Using the notation of Proposition~\ref{prop:relativesmooth}, 
we have 
$ 
\nabla_h D(v|Wh)=\sum_{i=1}^m \big(1-\frac{v_i}{(Wh)_i}\big) W_{i,:},
$
where  $W_{i,:}$ is the $i$-th row of $W$, and 
$\mathcal B_\kappa (h,h^k)=\sum_{j=1}^r \frac{h_j}{(h^k)_j}-\log \frac{h_j}{(h^k)_j} -1.  
$
 The following proposition provides the closed-form solution for the mirror descent step, see its proof in \cite[Section 5.2]{Bauschke2017}.
\begin{proposition}
\label{prop:KLNMFrelative}
Using the notation of 
Proposition~\ref{prop:relativesmooth}, the problem 
\begin{equation}
\label{eq:KLMD}
\min_{h\geq \varepsilon} \Big \{ D(v|Wh^k) + \big\langle\nabla_h D(v|Wh)[h^k],h-h^k \big\rangle + L\mathcal{B}_\kappa(h,h^k)\Big\}
\end{equation}
has the following unique closed-form solution $h^{k+1}$: for $l \in [r]$,  
\begin{equation}
\label{eq:KLclosedform}
\begin{split}
\displaystyle
(h^{k+1})_l&=\max\left\{\frac{(h^k)_l}{1+\frac{1}{L} (h^k)_l\big(\sum\limits_{i=1}^mW_{il}-\sum\limits_{i=1}^m \frac{v_i W_{il}}{(Wh^k)_i}\big)},\varepsilon\right\}.  
\end{split}
\end{equation}
\end{proposition}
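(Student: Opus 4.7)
The plan is to exploit the complete separability of the objective of \eqref{eq:KLMD} across the components $h_1,\ldots,h_r$. The linear term $\langle \nabla_h D(v|Wh^k), h - h^k\rangle$ is obviously separable, and the choice $\kappa(h) = -\sum_j \log h_j$ makes the Bregman divergence split as $\mathcal{B}_\kappa(h,h^k) = \sum_l \bigl(h_l/h^k_l - \log(h_l/h^k_l) - 1\bigr)$. Since the constraint $h \geq \varepsilon$ is coordinate-wise, \eqref{eq:KLMD} decouples into $r$ independent scalar problems of the form
\[
\min_{h_l \geq \varepsilon} \; g_l\, h_l + L\Bigl(\tfrac{h_l}{h^k_l} - \log \tfrac{h_l}{h^k_l}\Bigr) + \text{const},
\]
where $g_l = [\nabla_h D(v|Wh^k)]_l = \sum_i W_{il}\bigl(1 - v_i/(Wh^k)_i\bigr)$.

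Next I would solve each scalar problem in closed form. On $(0,\infty)$, the scalar objective is strictly convex with derivative $g_l + L/h^k_l - L/h_l$, which is strictly increasing. Setting this derivative to zero gives the unique unconstrained stationary point
\[
h_l^{\mathrm{unc}} = \frac{h^k_l}{1 + (g_l h^k_l)/L},
\]
which matches the interior branch of \eqref{eq:KLclosedform}. By strict convexity and the monotonicity of the derivative, the objective is decreasing on $(0, h_l^{\mathrm{unc}}]$ and increasing on $[h_l^{\mathrm{unc}}, \infty)$, so the projection onto the half-line $[\varepsilon, \infty)$ is simply $\max(h_l^{\mathrm{unc}}, \varepsilon)$: feasible interior minimizers are kept, infeasible ones are clipped to the boundary $\varepsilon$.

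The one nontrivial point I would be careful about is showing that $h_l^{\mathrm{unc}}$ is well-defined and positive, i.e., $1 + g_l h^k_l/L > 0$, so that the fraction in \eqref{eq:KLclosedform} makes sense. This is where the relative smoothness hypothesis (Proposition~\ref{prop:relativesmooth}) is doing work. One clean way is to note that the majorizer must itself be bounded below (since it upper-bounds the nonnegative function $g(h) = D(v|Wh)$), so probing it along rays $h = h^k + t e_l$ with $t \to \infty$ forces the asymptotic slope $g_l + L/h^k_l$ to be nonnegative. Alternatively, a direct estimate for $L = \|v\|_1$ works: since $W_{il}h^k_l \leq (Wh^k)_i$ one has $\sum_i v_i W_{il}/(Wh^k)_i \leq \|v\|_1/h^k_l$, and combining this with $\sum_i W_{il} \geq 0$ gives $g_l + L/h^k_l \geq 0$ directly. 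The rest of the argument is routine scalar calculus, which is why the result is essentially stated and referenced to \cite{Bauschke2017}.
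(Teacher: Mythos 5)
Your derivation is correct and is essentially the standard argument for this result: the paper itself does not prove Proposition~\ref{prop:KLNMFrelative} but defers to \cite[Section 5.2]{Bauschke2017}, which performs the same coordinate-wise stationarity computation for the separable Bregman subproblem with $\kappa(h)=-\sum_j\log h_j$. The only point to tighten is that your direct estimate yields $g_l + L/h_l^k \geq 0$ whereas the closed form needs strict positivity of the denominator; equality would force $\sum_i W_{il}=0$, hence $W_{il}=0$ for all $i$ and thus $\|v\|_1=0$, so strictness holds whenever $v\neq 0$ (and your boundedness-along-rays argument already delivers strictness, since an asymptotic slope of exactly zero would let the $-L\log h_l$ term drive the majorizer to $-\infty$, contradicting that it upper-bounds the nonnegative function $h\mapsto D(v|Wh)$).
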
 
Recall that, $h\mapsto D(v|Wh)$  is $\|v\|_{1}$-relative smooth to $\kappa(h)=-\sum_{j=1}^r\log h_{j} $. Together with Proposition \ref{prop:KLNMFrelative} which provides a closed-form solution of the update~\eqref{eq:BMDstep}, we can therefore easily apply BMD to~KL NMF.   
Assumption \ref{assump:RelativeSmooth} is satisfied, and Theorem \ref{thm:BMDconvergence} (i) implies that $\Phi \big(x^{k}\big)$ is non-increasing.  
Moreover, using a similar method as in the proof Proposition~\ref{prop:sol_existence} and noting that $\Phi \big(x^{k}\big)\leq \Phi \big(x^{0}\big) $, we can prove that BMD for KL NMF~\eqref{KLNMF_perturbed} with $\varepsilon>0$ generates a bounded sequence.  
Together with the assumption $x\geq \varepsilon$, we see that all conditions of Theorem \ref{thm:BMDconvergence} (iii) are satisfied. Hence, we obtain the following convergence guarantee for BMD applied 
to~\eqref{KLNMF_perturbed}.

\begin{theorem}
\label{thm:KLNMFconvergence}
Algorithm \ref{alg:BMD} applied on the KL NMF problem~\eqref{KLNMF_perturbed} monotonically decreases the objective function for any $\varepsilon \geq 0$.  
Moreover, for $\varepsilon>0$, 
the generated sequence by BMD is bounded and globally converges to a stationary point of~\eqref{KLNMF_perturbed}. 
\end{theorem}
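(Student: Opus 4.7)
The plan is to verify, one by one, the hypotheses of Theorem~\ref{thm:BMDconvergence} applied to the perturbed problem~\eqref{KLNMF_perturbed}, where the blocks $x_i$ are taken to be the individual columns of $H$ and rows of $W$. First, Assumption~\ref{assump:RelativeSmooth}(i) follows directly from Proposition~\ref{prop:relativesmooth}: for each fixed $W$, the column map $H_{:,j} \mapsto D(V_{:,j}|WH_{:,j})$ is convex and relative smooth to $\kappa(h) = -\sum_k \log h_k$ with constant $\|V_{:,j}\|_1$, and by the symmetry $D(V|WH) = D(V^\top|H^\top W^\top)$ the analogous statement holds for each row of $W$ with constant $\|V_{i,:}\|_1$. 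Because these constants depend only on the fixed data $V$ (and not on the other blocks), Assumption~\ref{assump:RelativeSmooth}(ii) holds trivially with $\underline{L}_i = \overline{L}_i$. Invoking Theorem~\ref{thm:BMDconvergence}(i) then yields the first conclusion: $\Phi(x^k) = D(V|W^k H^k)$ is non-increasing for any $\varepsilon \geq 0$, where the closed-form update used in BMD is given by Proposition~\ref{prop:KLNMFrelative}.

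For the $\varepsilon > 0$ case, the next step is to show that the sequence $\{(W^k, H^k)\}$ is bounded. Set $C := \Phi(x^0)$. The scalar function $\phi_a(x) = x - a \log x$ on $(0, \infty)$ is bounded below by a finite value $m_a$ and satisfies $\phi_a(x) \to +\infty$ as $x \to \infty$. Writing $\Phi(x^k) = \sum_{i,j}\phi_{V_{ij}}\bigl((W^k H^k)_{ij}\bigr) + \text{const} \leq C$ and bounding all but one term from below by the $m_{V_{ab}}$, each $(W^k H^k)_{ij}$ lies in a compact subset of $(0, \infty)$, uniformly in $k$. Since each $W_{ik}^k, H_{kj}^k \geq \varepsilon$, we have $W_{ik}^k H_{kj}^k \leq (W^k H^k)_{ij}$, which combined with the lower bound $\varepsilon$ on the other factor yields a uniform upper bound on $W_{ik}^k$ and $H_{kj}^k$. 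Hence the iterates lie in a compact box $[\varepsilon, M]^{m \times r} \times [\varepsilon, M]^{r \times n}$.

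On this bounded set, the remaining hypotheses of Theorem~\ref{thm:BMDconvergence}(iii) are easy to verify. The kernel $\kappa_i(x_i) = -\sum_k \log x_{i,k}$ has Hessian $\operatorname{diag}(1/x_{i,k}^2)$, which is bounded below by $1/M^2$ on the box, proving strong convexity on the bounded subsets that contain $\{x_i^k\}$. Its gradient $-1/x_{i,k}$ is Lipschitz on $\{x_{i,k} \in [\varepsilon, M]\}$ since the derivative $1/x_{i,k}^2 \leq 1/\varepsilon^2$. For the column block of $H$,
\[
\nabla_h D(V_{:,j}|Wh) \;=\; W^\top e - W^\top \bigl[V_{:,j}/(Wh)\bigr],
\]
and on the compact region $W, h \in [\varepsilon, M]$ one has $(Wh)_i \geq r\varepsilon^2$, so each component of the gradient is a composition of smooth functions with uniformly bounded derivatives, hence Lipschitz in $(W,h)$; the row blocks of $W$ are treated symmetrically. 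Applying Theorem~\ref{thm:BMDconvergence}(iii) gives convergence of the whole sequence to a critical point of $\Phi$, which by the argument in Section~\ref{sec:KKT} coincides with a stationary (equivalently KKT) point of~\eqref{KLNMF_perturbed}.

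The main obstacle is the boundedness argument: KL sublevel set analysis is delicate when some $V_{ij} = 0$, because then $\phi_0(x) = x$ bounds $(WH)_{ij}$ only via the sum over $(i,j)$, not term-by-term. The decomposition above exploits this sum together with the term-by-term lower bounds $m_{V_{ij}}$ to pin each $(W^k H^k)_{ij}$ individually, after which the uniform positive lower bound $\varepsilon$ on factor entries is essential to transfer boundedness from the product $W^k H^k$ to each factor. Once boundedness is established, all other verifications are routine in view of Propositions~\ref{prop:relativesmooth} and~\ref{prop:KLNMFrelative}.
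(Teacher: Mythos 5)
Your proof is correct, and its overall skeleton is the one the paper uses: verify Assumption~\ref{assump:RelativeSmooth} via Proposition~\ref{prop:relativesmooth} (with the blocks being the columns of $H$ and the rows of $W$), invoke Theorem~\ref{thm:BMDconvergence}(i) for monotonicity, establish boundedness of the iterates, and then check the remaining hypotheses of Theorem~\ref{thm:BMDconvergence}(iii) using $x\geq\varepsilon$. The one step where you genuinely diverge is the boundedness argument. The paper handles it by pointing to ``a similar method as in the proof of Proposition~\ref{prop:sol_existence}'': there, after normalizing $\sum_{ij}V_{ij}=1$, the objective is bounded below via $\log x\leq x-1$ by an affine function of the entries of $W$ and $H$ with coefficient $(1-v_{\max})\varepsilon$, so the sublevel set $\{f_\varepsilon\leq C\}$ is bounded. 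You instead decompose $\Phi(x^k)=\sum_{i,j}\phi_{V_{ij}}\bigl((W^kH^k)_{ij}\bigr)+\mathrm{const}$ with $\phi_a(x)=x-a\log x$, use the term-by-term lower bounds $m_{V_{ij}}$ together with $\Phi(x^k)\leq\Phi(x^0)$ to bound each $(W^kH^k)_{ij}$ above individually, and then transfer this to the factors via $W^k_{ik}H^k_{kj}\leq (W^kH^k)_{ij}$ and the floor $\varepsilon$ on the other factor. Your route is more self-contained and explicit, needs no normalization of $V$, and does not rely on the sign of $1-v_{\max}$; the paper's route has the advantage of reusing the sublevel-set machinery already developed for Proposition~\ref{prop:sol_existence}. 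Your subsequent verifications (strong convexity of $\kappa_i$ and Lipschitz continuity of $\nabla\kappa_i$ and $\nabla_i f$ on the compact box $[\varepsilon,M]$, using $(Wh)_i\geq r\varepsilon^2$) match what the paper implicitly relies on, and your identification of critical points of $\Phi$ with stationary points of~\eqref{KLNMF_perturbed} is justified by Section~\ref{sec:KKT}.
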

It is important noting that $\nabla \kappa (h)$ is not Lipschitz continuous on bounded sets containing 0. 
We hence cannot apply Theorem \ref{thm:BMDconvergence} (iii) to the KL NMF problem~\eqref{KLNMF}.

\vspace{-0.1in}
\subsection{A scalar Newton-type algorithm} \label{sec:SN}


In this section, we propose a new scalar Newton-type (SN) algorithm that makes use of the self-concordant property of the objective function (see Section \ref{sec:self-con}) to guarantee the non-increasingness for the objective sequence, that consequently guarantees the convergence of the objective sequence since it is also bounded from below. 
The motivation to propose this new method comes from our observation that CCD does not come with any convergence guarantee, as explained in Section~\ref{sec:CCD}, although it performs well in many cases.  

Let us first have a brief review on Newton methods. Unconstrained minimization problem of a self-concordant function
can be efficiently solved by Newton methods, see \cite[Section 5.2]{NesterovLecture2018}. 
Tran-Dinh et al.~\cite{Tran2015} brought the spirit of Newton methods for unconstrained optimization to composite optimization problems of the form 
\begin{equation}
\label{eq:composite_opt}
\min_{x\in \mathbb R^n } \Psi(x):=\psi(x) + \phi(x),
\end{equation}
where $\psi(x)$ is a standard self-concordant function and $\phi(x)$ is a proper, closed, convex but possibly non-smooth function. In particular, the authors propose a proximal Newton method (PNM) with the following update 
\begin{equation}
\label{eq:proximalNewton}
x^{k+1}=x^k + \alpha_k d^{k},
\end{equation}
 where $\alpha_k \in (0,1]$ is a step size,  $d^k=s^k-x^k$ and 
 \begin{align*}
 s^k=\arg\min_x\big\{ \psi(x^k) + \langle \nabla \psi(x^k), x-x^k \rangle+ \frac12 (x-x^k)^\top  \nabla^2 \psi (x^k) (x-x^k)   + \phi(x)\big\}. 
 \end{align*}
Denoting $\lambda_k=\|d^k\|_{x^k}=\big((d^k)^\top  \nabla^2  \psi (x^k) d^k \big)^{1/2}$, it follows from \cite[Theorem 6]{Tran2015} that when the stepsize $\alpha_k=(1+\lambda_k)^{-1}$ is used, then the PNM generates the sequence $x^k$ satisfying 
$\Psi(x^{k+1}) \leq \Psi(x^k)-\omega(\lambda_k)
$,
where $\omega(t)=t-\ln(1+t)$.

Recall that the KL objective function with respect to a scalar component of $W$ or $H$ is a self-concordant function, 
see Proposition~\ref{prop:self-conc-NMF}. We  can hence make use of the update in \eqref{eq:proximalNewton} to propose the SN method; see Algorithm \ref{alg:SN}.
\begin{algorithm}[ht!]
\caption{ SN for solving Problem~\eqref{KLNMF} } \label{alg:SN}
\begin{algorithmic}[1]
\STATE Choose initial points $W>0$, $H>0$. 
\STATE Compute the self-concordant constants $\mathbf c_{W_{ik}}$ and  $\mathbf c_{H_{kj}}$ of the function $W_{ik}\mapsto f$ and $H_{kj} \mapsto f$, see Proposition \ref{prop:self-conc-NMF}.
\REPEAT
\STATE Alternately update each scalar component of $W$ and $H$. 
We update $W_{ik}$ several times (similarly for $H_{kj}$) as follows. 
\STATE Calculate 
\begin{align*}
f'_{W_{ik}}=\sum_{l=1}^n H_{kl}-\sum_{l=1}^n V_{il}\frac{H_{kl}}{(WH)_{il}}, 
 f''_{W_{ik}}=\sum_{l=1}^n V_{il}\frac{H_{kl}^2}{\big (WH)^2_{il}},
\end{align*}
 
and let $s=\max \big\{W_{ik}- \frac{f'_{W_{ik}}}{f''_{W_{ik}}},0\big\}$, $d=s-W_{ik}$, and $\lambda=\mathbf c_{W_{ik}}\sqrt{f''_{W_{ik}}}|d|$.
\IF{ $f'_{W_{ik}}\leq 0$ or$^{\dagger}$ $\lambda \leq 0.683802$}
\STATE update $W_{ik}$ by a full proximal Newton step 
$W_{ik} \leftarrow s,$
\ELSE
\STATE update $W_{ik}$ by
$W_{ik} \leftarrow W_{ik} + \frac{1}{1+\lambda} d$.
\ENDIF
\UNTIL{some criteria is satisfied} 
\end{algorithmic}
\footnotesize $\dagger$ Such $\lambda$ guarantees that $\lambda^2 +  \lambda + \log (1-\lambda) > 0$ implying that the objective is non-increasing under a full Newton step; see the proof in Appendix~\ref{app:proofSNmono}. 
\end{algorithm} 

The following proposition proves that SN monotonically decreases the objective function. The proof is provided in Appendix~\ref{app:proofSNmono}.  
\begin{proposition}
\label{prop:SN_nonincreasing}
The objective function for the perturbed KL-NMF problem~\eqref{KLNMF} is non-increasing under the updates of Algorithm~\ref{alg:SN}. 
\end{proposition}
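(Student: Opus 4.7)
The plan is to show that every single scalar update performed inside Algorithm~\ref{alg:SN} does not increase $D(V|WH)$; monotonicity of the full sweep then follows because every outer iteration is a finite composition of such updates. By the symmetry $D(V|WH) = D(V^{\top}|H^{\top}W^{\top})$ it suffices to treat one update of a variable $W_{ik}$; write $f(w) := D(V|WH)$ viewed as a function of $w := W_{ik}$ only. By (the $W$-analogue of) Proposition~\ref{prop:self-conc-NMF}, $f$ is self-concordant with constant $\mathbf{c}_{W_{ik}}$, and a direct computation gives
\[
f''(w) = \sum_{j=1}^{n} \frac{V_{ij} H_{kj}^{2}}{(WH)_{ij}^{2}} \geq 0,
\qquad
f'''(w) = -2\sum_{j=1}^{n} \frac{V_{ij} H_{kj}^{3}}{(WH)_{ij}^{3}} \leq 0;
\]
the sign of $f'''$ will be crucial in one of the branches.

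I would then split the analysis by the branch of Algorithm~\ref{alg:SN} that is executed. In the full-step branch triggered by $f'(w) \leq 0$, since $f'' \geq 0$ the unconstrained Newton point $w - f'/f''$ already lies in $[w, \infty) \subset [0, \infty)$, so $s = w - f'/f''$ and no projection is active. Taylor's formula with Lagrange remainder, combined with $f''' \leq 0$, shows that for every $x \geq w$ the quadratic model $q(x) := f(w) + f'(w)(x-w) + \tfrac{1}{2} f''(w)(x-w)^{2}$ is an upper bound of $f(x)$; applying this at $x = s$ and using $q(s) \leq q(w) = f(w)$ (as $s$ minimizes $q$) yields $f(s) \leq f(w)$.

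In the full-step branch triggered by $\lambda \leq 0.683802$, the function $\mathbf{c}_{W_{ik}}^{2} f$ is standard self-concordant, so the classical descent lemma for self-concordant functions (cf.\ \cite[Ch.~5]{NesterovLecture2018}) yields
\[
f(w + d) - f(w) \;\leq\; f'(w)\, d \;+\; \frac{\omega_{*}(\lambda)}{\mathbf{c}_{W_{ik}}^{2}},
\qquad \omega_{*}(t) := -t - \ln(1-t),
\]
as long as $\lambda < 1$. If the projection is inactive ($s > 0$), the definition of $s$ gives the identity $f'(w)\, d = -\lambda^{2}/\mathbf{c}_{W_{ik}}^{2}$. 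If it is active ($s = 0$, $d = -w$), the KKT condition of the scalar projected subproblem $\min_{x\ge 0}\{f'(w)(x-w) + \tfrac{1}{2} f''(w)(x-w)^{2}\}$ forces $f'(w) \geq f''(w)\, w$, whence $f'(w)\, d = -f'(w) w \leq -f''(w) w^{2} = -\lambda^{2}/\mathbf{c}_{W_{ik}}^{2}$. In either sub-case,
\[
f(w + d) - f(w) \;\leq\; -\,\frac{\lambda^{2} + \lambda + \ln(1-\lambda)}{\mathbf{c}_{W_{ik}}^{2}},
\]
and the threshold $0.683802$ is chosen so that the numerator on the right-hand side is non-negative.

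Finally, in the damped-step branch $w \leftarrow w + d/(1+\lambda)$, the update is exactly the composite proximal Newton iteration of Tran-Dinh et al.~\cite{Tran2015} applied to the standard self-concordant function $\mathbf{c}_{W_{ik}}^{2} f$ plus the indicator of $[0,\infty)$; their Theorem~6 delivers $f(w^{+}) - f(w) \leq -\omega(\lambda)/\mathbf{c}_{W_{ik}}^{2}$ with $\omega(t) := t - \ln(1+t) \geq 0$. The most delicate point of the whole argument is the boundary sub-case $s = 0$ in the second branch: the equality $f'(w)\, d = -\lambda^{2}/\mathbf{c}_{W_{ik}}^{2}$ that one obtains for free in the unconstrained Newton step must be replaced by an inequality extracted from the active-projection KKT condition, and it is precisely this inequality that preserves the same $0.683802$ threshold as in the unconstrained case.
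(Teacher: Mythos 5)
Your proof is correct and follows essentially the same route as the paper's: the same three-way case split, the sign of $f'''$ driving the $f'\leq 0$ branch (you majorize $f$ by its quadratic model, whereas the paper shows $f'(s)\leq 0$ via concavity of $f'$ and then invokes convexity of $f$ --- equivalent uses of the same fact), the self-concordant descent inequality with the $0.683802$ threshold for the full-step branch, and Theorem~6 of \cite{Tran2015} for the damped step. Your explicit KKT treatment of the projection-active sub-case $s=0$ is a welcome extra detail that the paper leaves implicit in its citation of the composite inequality (58) from \cite{Tran2015}.
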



\vspace{-0.1in}
\subsection{A hybrid SN-MU algorithm}

\label{sec:SNMU}

We recall that MU possesses an important property, namely that $(W,H)$ scaled after any MU update; see Proposition \ref{prop:scaleMU}. 
On the other hand, we note that KKT points of Problem~\eqref{KLNMF} are also scaled. However, 
SN does not possess this scale-invariant property. 
Hence we propose to combine SN 
with MU to result in a hybrid SN-MU algorithm. 
Specifically, we alternately run several SN steps before scaling the sequence by running one or several updates of MU. 
In Section~\ref{sec:experiment}, we will use 10 steps of SN followed by one step of MU for all numerical experiments. As we will see, this combination sometimes significantly improves the performance of SN.

\section{Experiments}
\label{sec:experiment}

In this section, we report comparisons of the KL NMF algorithms listed in Table~\ref{tab:algorithms}. 
\begin{table}[t]\centering
\caption{
Properties of KL NMF algorithms presented in this paper, applied on an $m$-by-$n$ matrix $V$ to be approximated with a rank-$r$ approximation $WH$. The parameter $d$ is the number of inner iterations.
}
\label{tab:algorithms}
\begin{tabular}{|c|c|c|c|c|}
\hline
Algorithms & Complexity & Convergence & Monotonicity & Reference \\
& (flops) & & & \\ 
\hline
MU & $O(mnr)$  & \cmark & \cmark & Sec.~\ref{sec:MU} \\ 
ADMM & $O(mnr) $ & \xmark &\xmark  & Sec.~\ref{sec:ADMM}\\
PD &$O(mnrd)$  & \xmark & \xmark & Sec.~\ref{sec:PD}\\
CCD  & $O(mnrd)$ & \xmark & \xmark& Sec.~\ref{sec:CCD}\\
BMD  & $O(mnr) $ & \cmark & \cmark& Sec.~\ref{sec:BMD}\\
SN  &  $O(mnrd)$ & \xmark &\cmark & Sec.~\ref{sec:SN}\\
SN-MU & $O(mnrd) $  & \xmark & \cmark & Sec.~\ref{sec:SNMU}\\
\hline
\end{tabular}
  
\end{table} 
The second column of Table~\ref{tab:algorithms} provides the complexity of one iteration to update all entries of $(W,H)$.  
The parameter $d$ in the second column of PD, CCD, SN and SN-MU is the number of inner iterations of one main iteration of updating $(W,H)$.  The third column indicates whether the corresponding algorithm has some convergence guarantee for its generated sequence. We note that, considering Problem~\eqref{KLNMF_perturbed} with $\varepsilon>0$,  MU guarantees a subsequential convergence while BMD guarantees a global convergence. The fourth column indicates if the sequence of the objective function values is non-decreasing.

\vspace{-0.1in}

\subsection{Implementation} \label{sec:implementation}

We have implemented MU and BMD in Matlab, SN in C++ and use its mex file to run it from Matlab, as for CCD provided by the authors\footnote{\url{http://www.cs.utexas.edu/~cjhsieh/nmf}} (for which we have fixed an issue on maintaining $(WH)_{ij}$, otherwise it sometimes run into numerical issues generating NaN objective function values because $(WH)_{ij}$ could take negative values).  
We use the Matlab code provided by the authors 
for ADMM\footnote{\url{http://statweb.stanford.edu/~dlsun/admm.html}} and PD\footnote{\url{https://github.com/felipeyanez/nmf}}.  \revise{We used the best possible programming language for each algorithm. For example, if CCD was implemented on Matlab, it would be extremely slow as it loops over each variable (and Matlab is very ineffective to handle loops). On the other side, the MU  run faster on Matlab because the main computational cost resides in matrix-matrix multiplications for which Matlab is more effective than C++.}   
All tests are preformed using Matlab
R2018a on a laptop Intel CORE i7-8550U CPU @1.8GHz 16GB RAM.  The code is available at \url{https://github.com/LeThiKhanhHien/KLNMF}. 
We choose the penalty parameter of ADMM to be equal to~1 in all of the experiments. In each run for a data set, we use the same random initialization and the same \revise{maximal} running time for all algorithms. 

 We use the Matlab commands $W=rand(m,r)$ and $H=rand(r,n)$ to generate a random initial point; and to avoid initial points with a large value $D(V,WH)$, we then scale $W$ and $H$ by $W=\sqrt{\alpha} W$, $ H=\sqrt{\alpha} H$, where 
$\alpha=\frac{\sum_{i,j} V_{ij}}{\sum_{i,j}(WH)_{ij}}$; 
see Definition~\ref{def:scalepoint} 
and Proposition~\ref{prop:scalepoint}. 
We define the relative error ${\rm rel} \, D(V,WH)$ to be the objective $D(V,WH)$ divided by $\sum_{i,j} V_{ij}\log\frac{V_{ij}}{(\sum_j V_{ij})/n}$, and \revise{denote $E(t)$ the value of ${\rm rel} \, D(V,WH) - e_{\min}$ at time $t$, 
where $e_{\min}$ is the smallest value among all ${\rm rel} \, D(V,WH)$ produced by all algorithms and all initializations within the allotted time. 
Hence $E(t)$ goes to zero for the best run among all algorithms and initializations.   } 

\vspace{-0.1in}

\subsection{Experiments with synthetic data sets}

For each type of synthetic data sets, we generate 10 random matrices $V$, 
then for each random $V$, we generate 10 random initial points.  

\vspace{-0.1in}

\subsubsection{Low-rank synthetic data sets}


We will consider several types of low-rank synthetic data sets depending on the parameter $\ell \in (0,1]$ which is the density of the underlying factors, denoted $W^*$ and $H^*$. We will use $\ell = 1,0.9,0.3$. 
More precisely, to generate a low-rank synthetic data set $V$, we use the Matlab commands $W^* = sprand(m,r,\ell)$ and $H^* = sprand(r,n,\ell)$, where $\ell$ is the density of non-zero elements (that is, $1-\ell$ is the percentage of zero elements), and let $V = W^*H^*$.   
We will also either keep $V = W^*H^*$ as is, which is the noiseless case, or generate each entry of $V$  following a Poisson distribution of parameters $W^*H^*$ as described in Section~\ref{sec:motiv}, which is a noisy case and is achieved with the Matlab command $V = poissrnd(W^*H^*)$.


The results of applying the different algorithms on such matrices are reported in Figure~\ref{fig:lowrank_200x200} for 200-by-200 matrices with $r=10$, and in Figure~\ref{fig:lowrank_500x500} for 500-by-500 matrices with $r=20$. We report the evolution of the \revise{median value of $E(t)$. Although this is not an ideal choice, comparing the performance in term of iterations would be worse since the cost of one iteration can be rather different for each algorithm; for example, CCD has inner iterations, which is not the case of MU.} 
 We also report the average and standard deviation (std) of the relative errors \revise{over 200 runs for the 6 types of synthetic data sets (100 runs for each size $200\times 200$ or $500\times 500$) in Table~\ref{tab:accuracy_lowrank}, and provide a ranking over the total 1200 runs between the different algorithms in Table~\ref{tab:rank_lowrank}}: the $i$th entry of the ranking vector indicates how many times the corresponding algorithm obtained the $i$th best solution (that is, with the $i$th lowest objective function value). This table allows to see which algorithms performs on average the best on these data sets. 

\begin{figure*}[ht]
\begin{center}
\begin{tabular}{cc}
\includegraphics[width=0.43\textwidth]{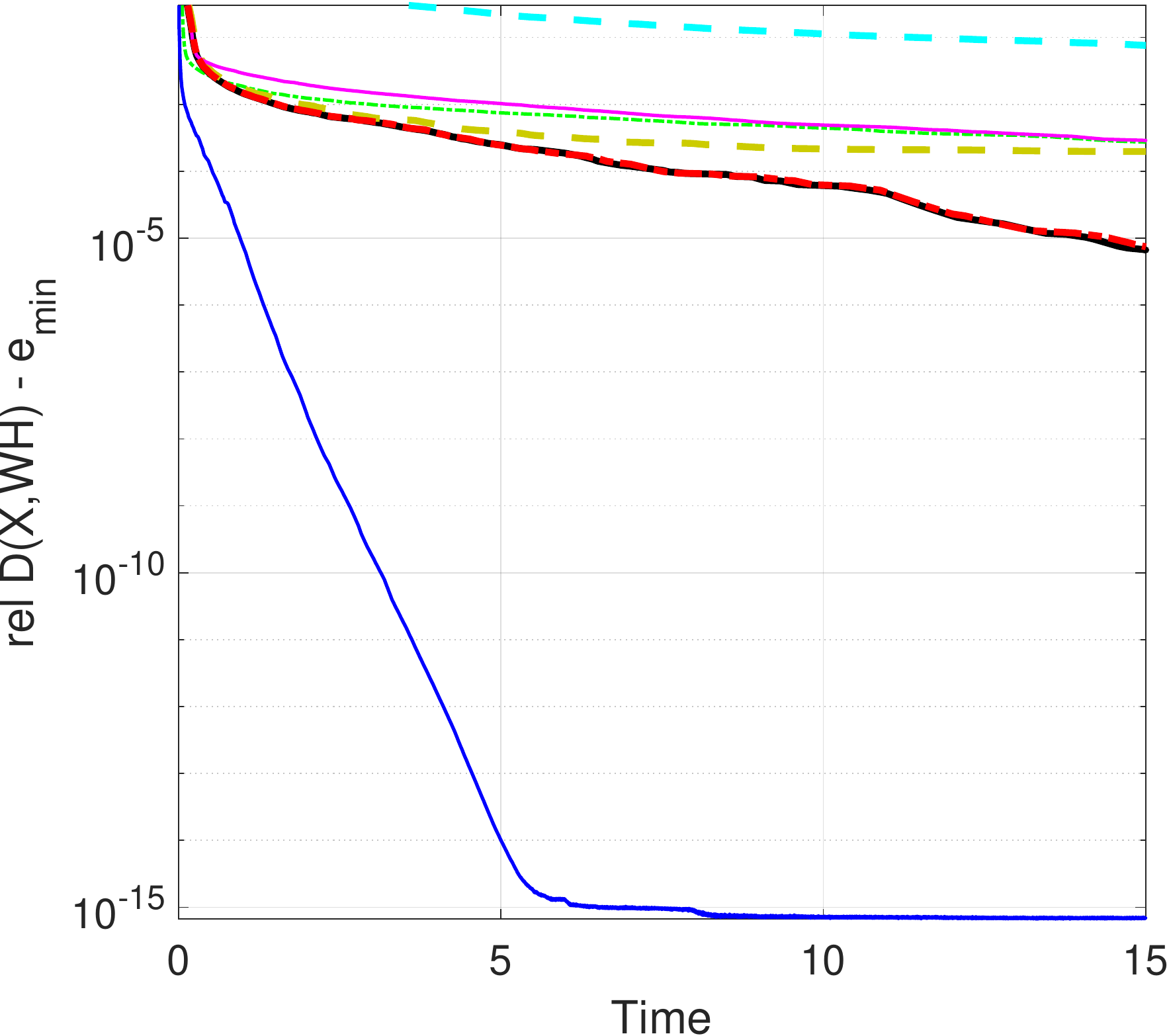}  & 
\includegraphics[width=0.425\textwidth]{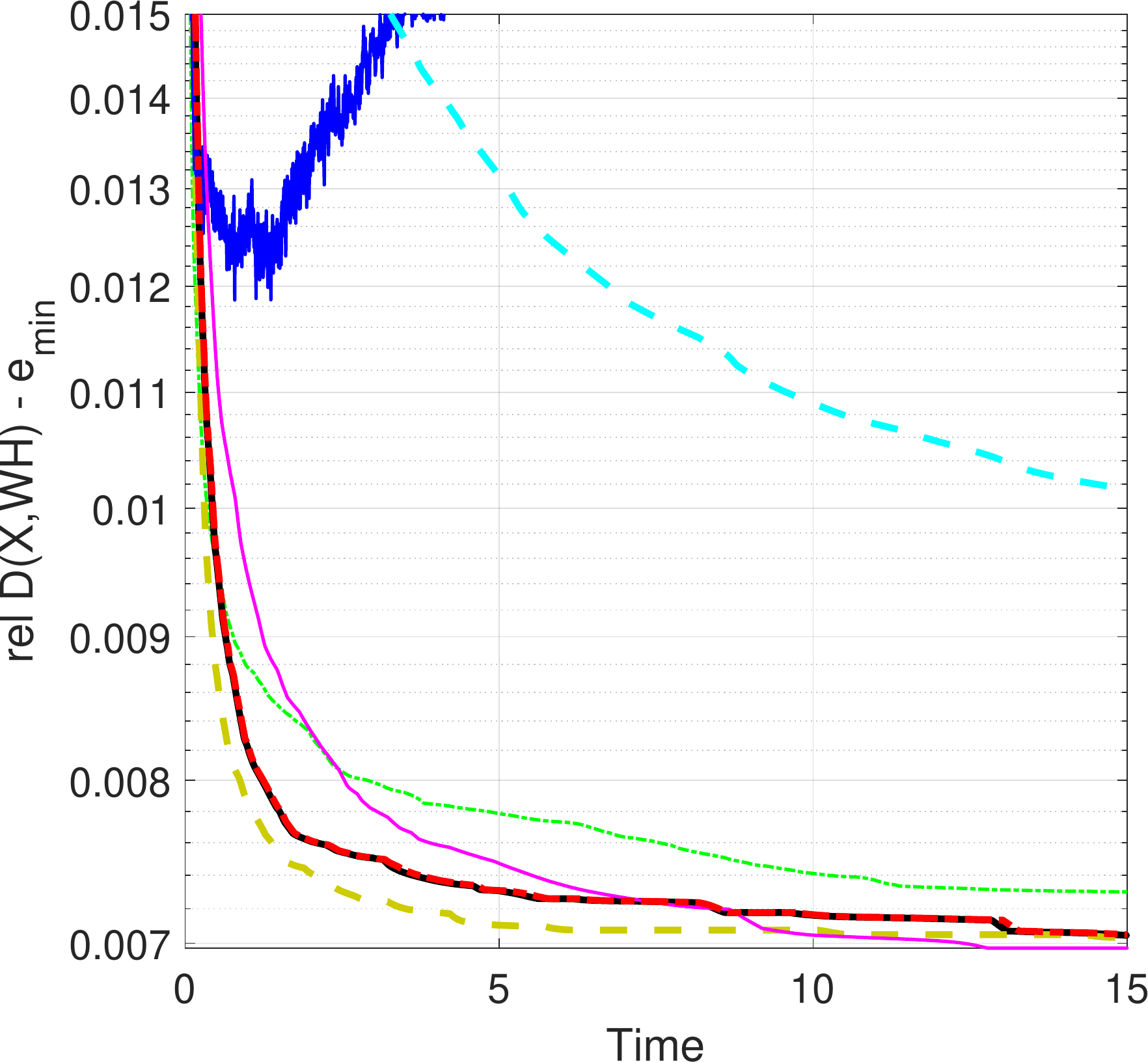} \\
\includegraphics[width=0.43\textwidth]{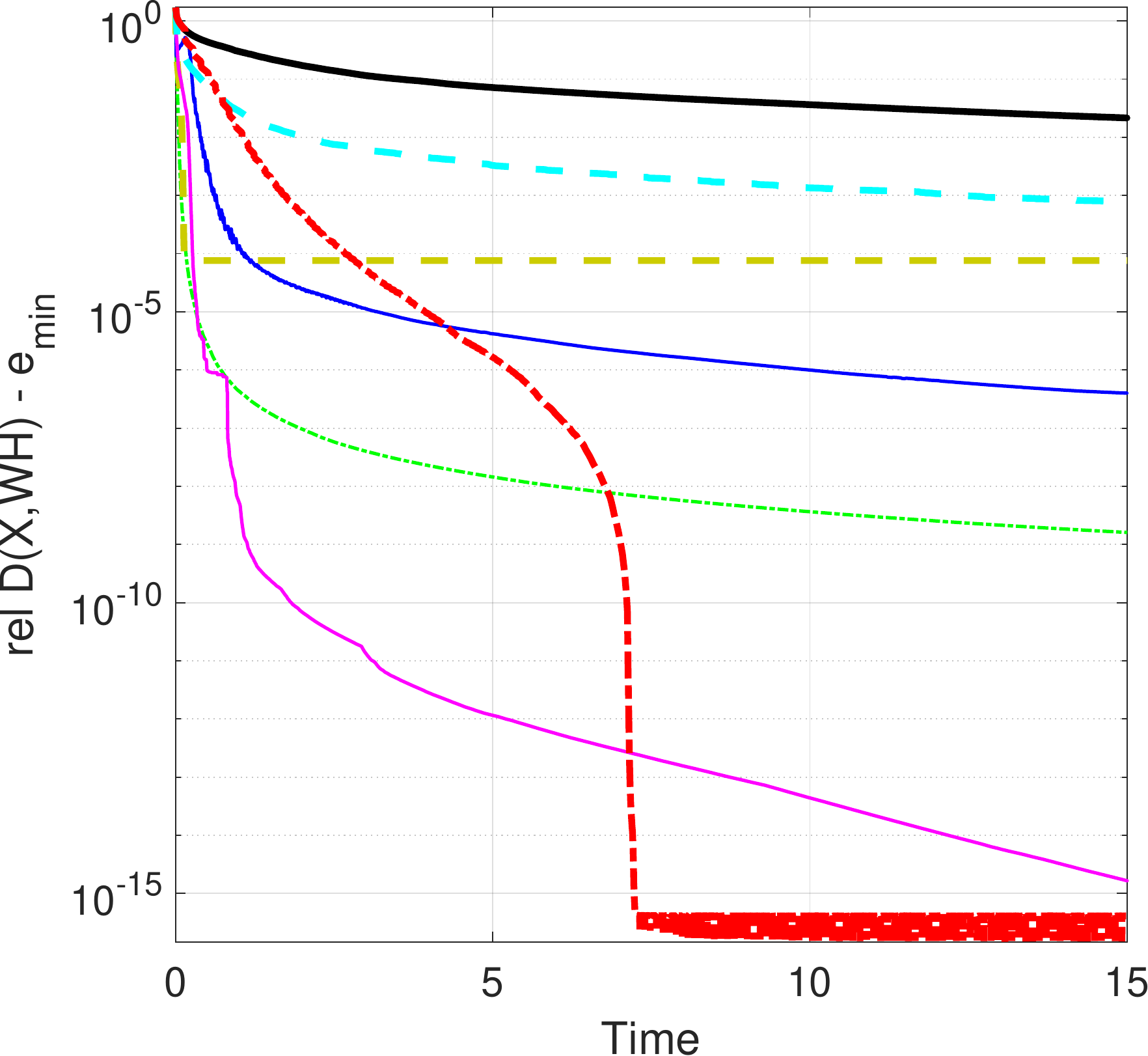} &
\includegraphics[width=0.43\textwidth]{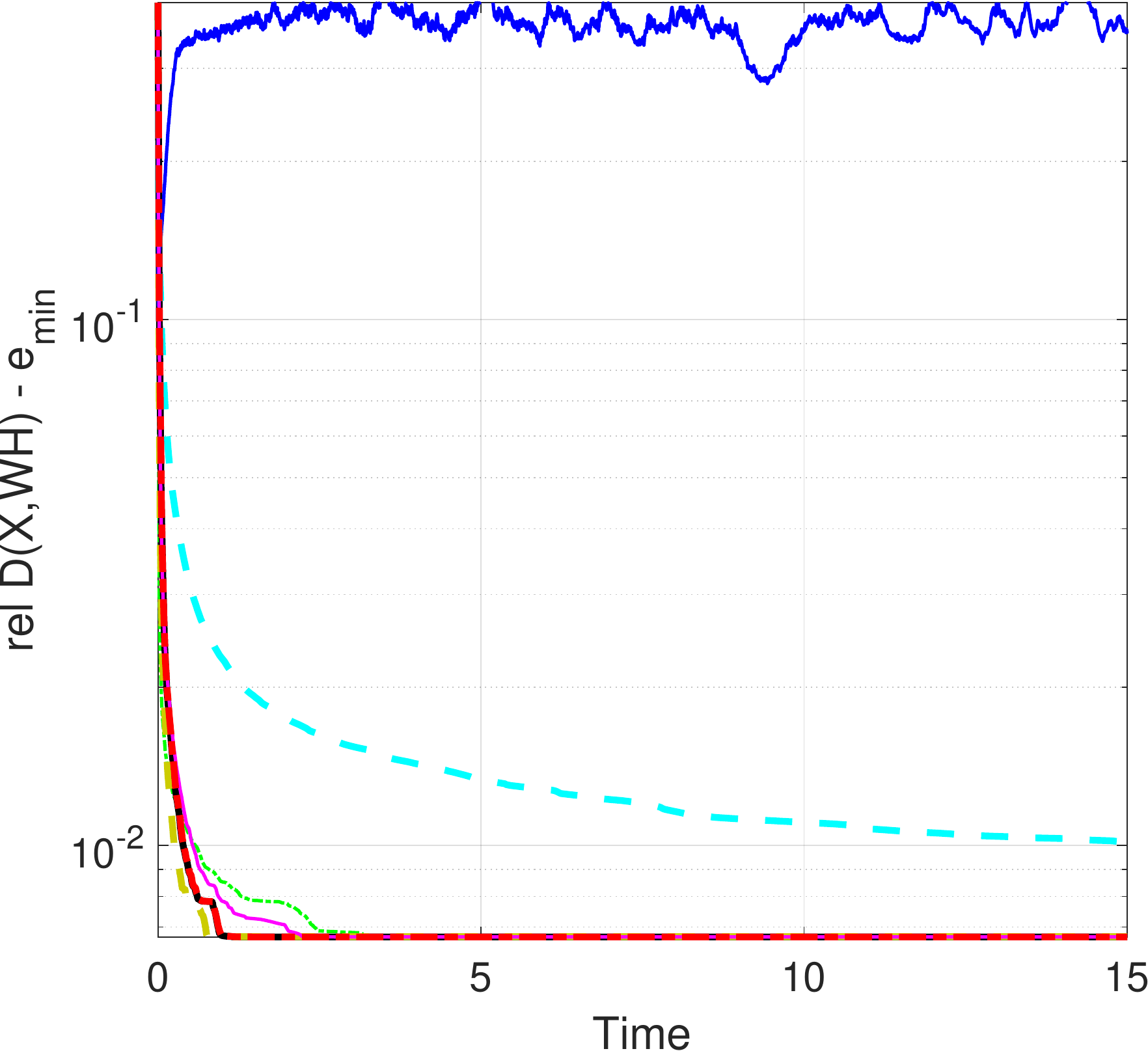} \\ 
\includegraphics[width=0.43\textwidth]{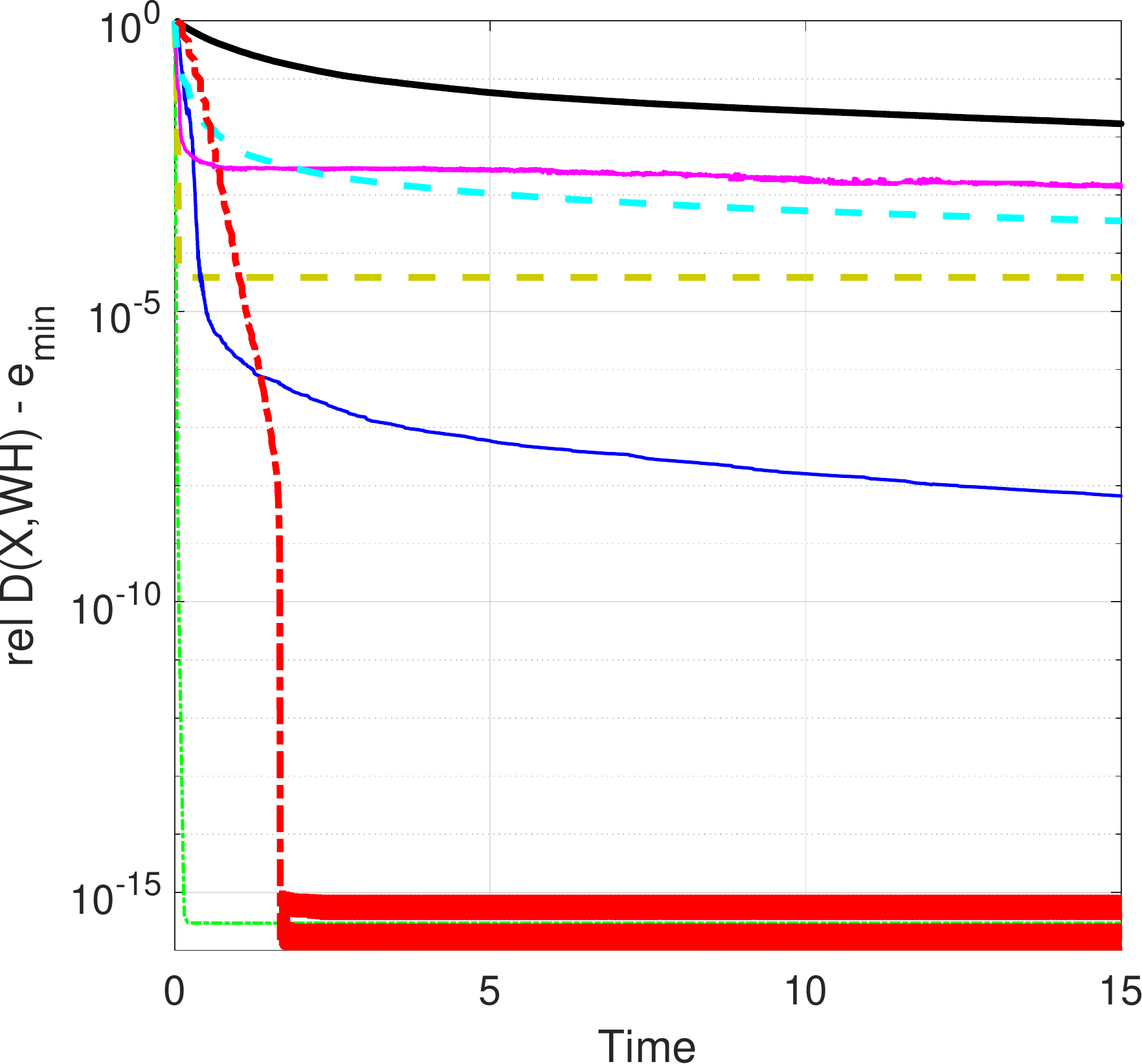} & 
\includegraphics[width=0.43\textwidth]{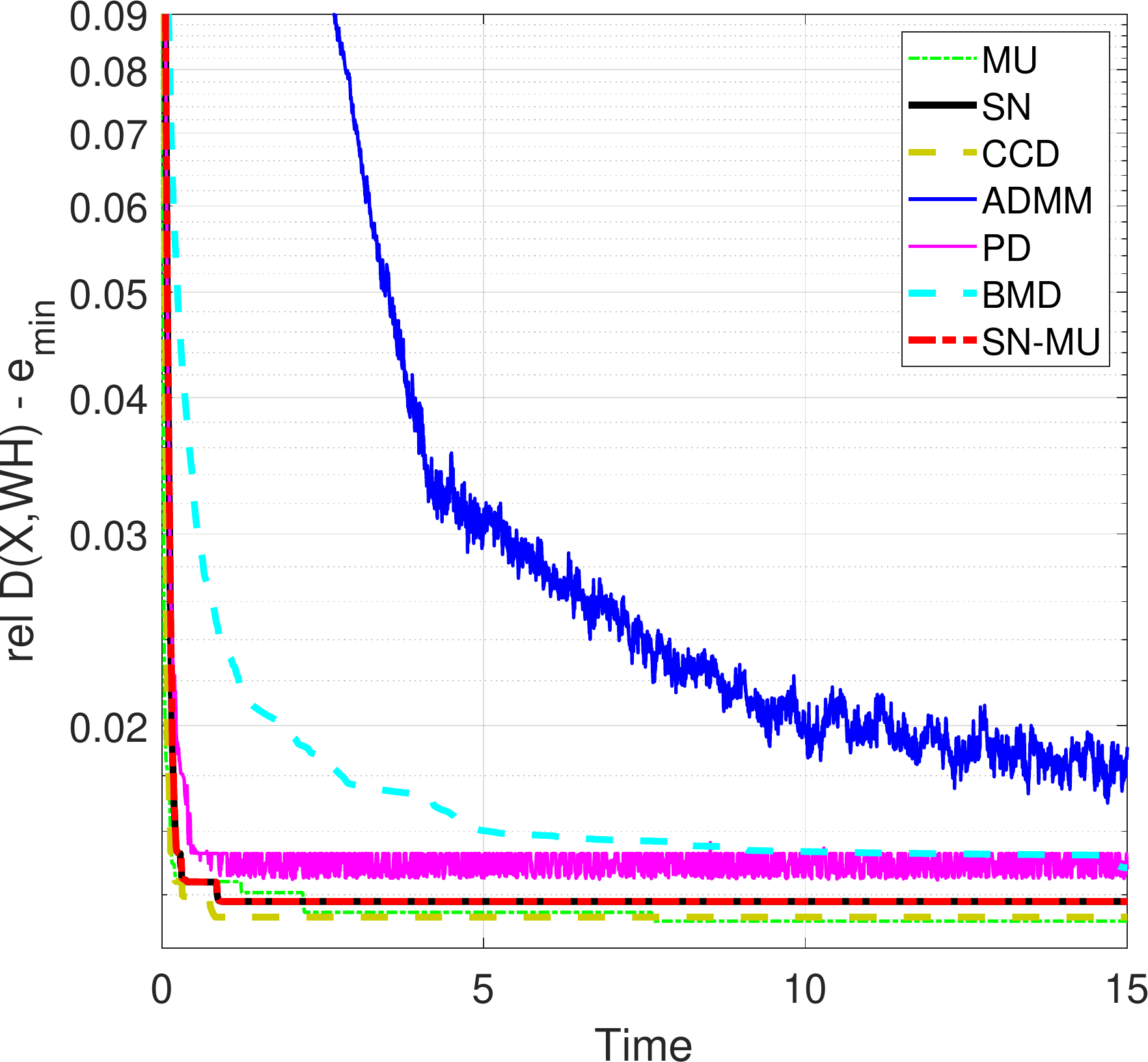}
\end{tabular}
\caption{\revise{Median value} of the error measure $E(t)$ on $200\times 200$ low-rank matrices for various KL NMF algorithms (see Table~\ref{tab:algorithms}).  
The left column corresponds to noiseless input matrices, the right column to noisy matrices using the Poisson distribution. 
The top row corresponds to dense factors $(\ell = 1)$, 
the middle row to slightly sparse factors $(\ell = 0.9)$, and the bottom row to sparse factors $(\ell = 0.3)$. 
\label{fig:lowrank_200x200}} 
\end{center}
\end{figure*}

\begin{figure*}[ht]
\begin{center}
\begin{tabular}{cc}
\includegraphics[width=0.43\textwidth]{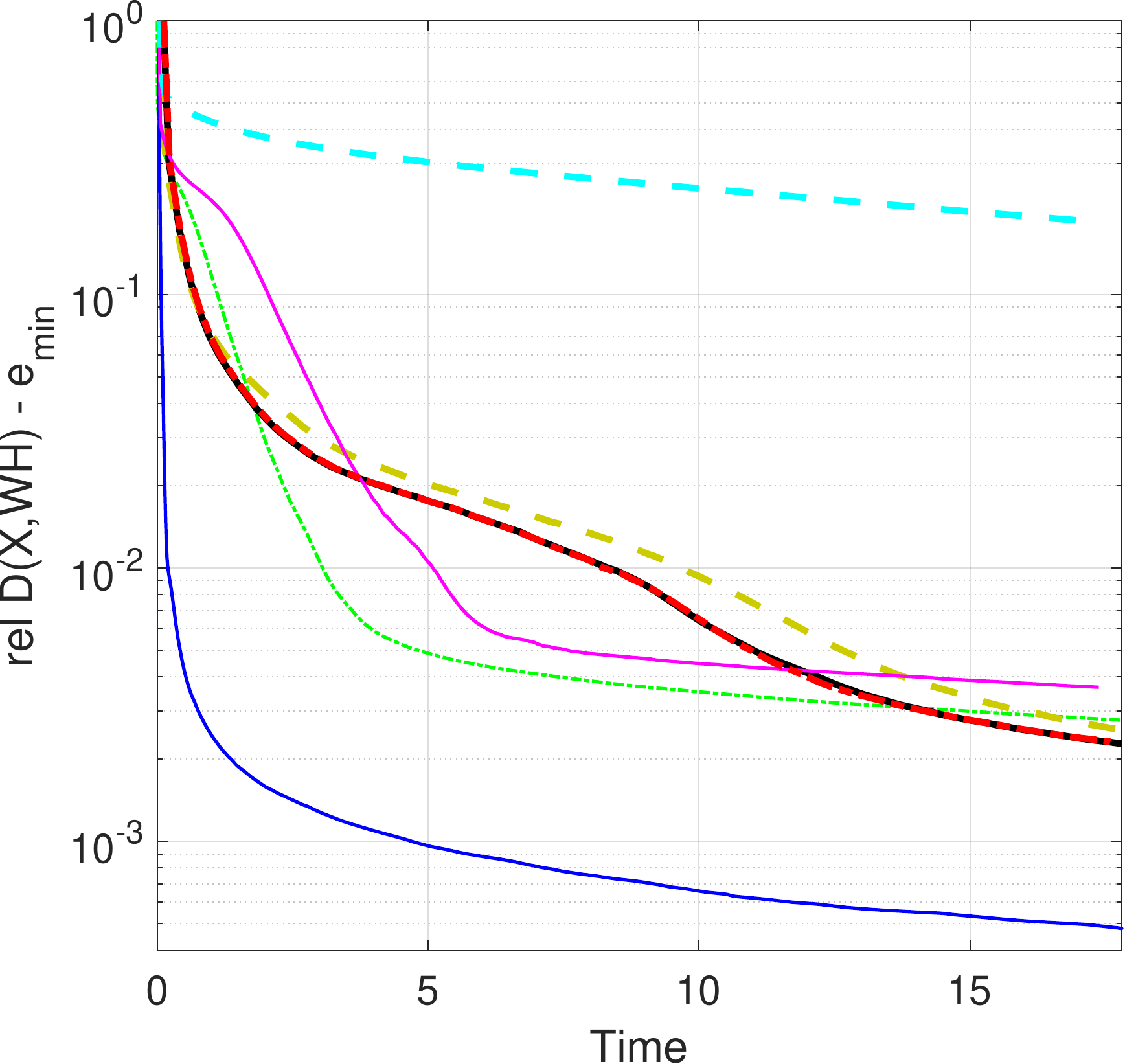}  
& \includegraphics[width=0.43\textwidth]{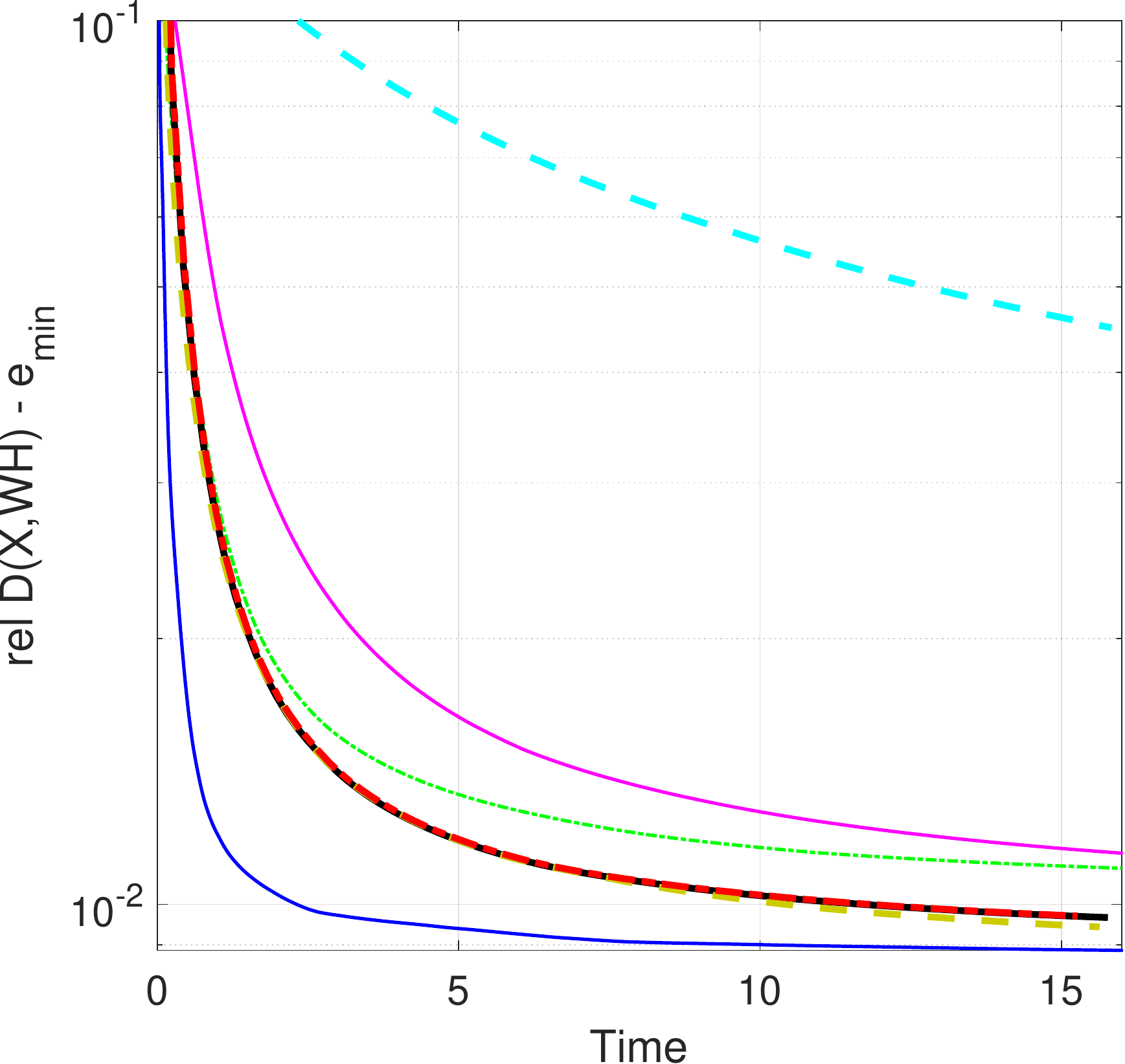} 
\\
\includegraphics[width=0.43\textwidth]{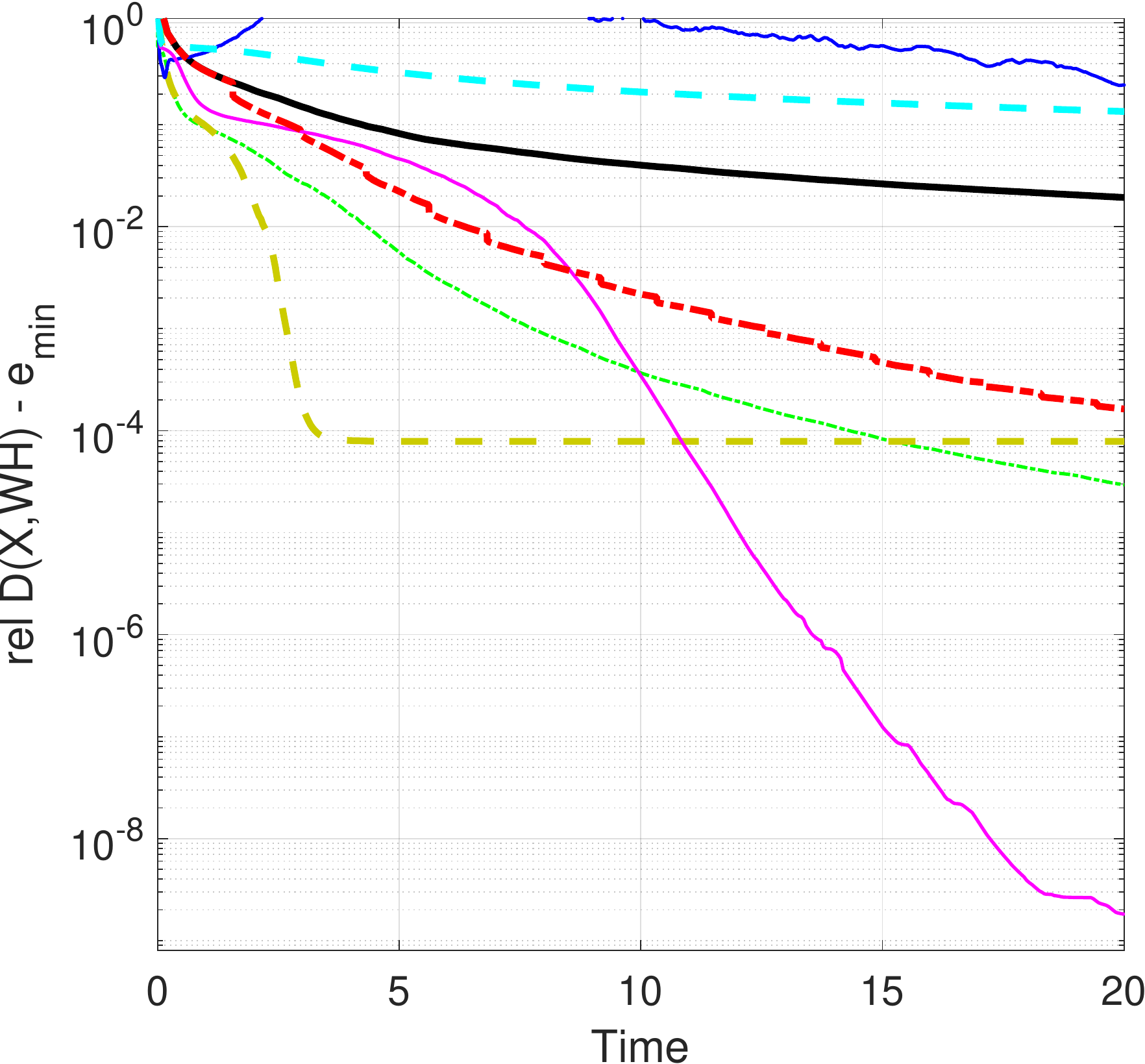} 
& 
\includegraphics[width=0.43\textwidth]{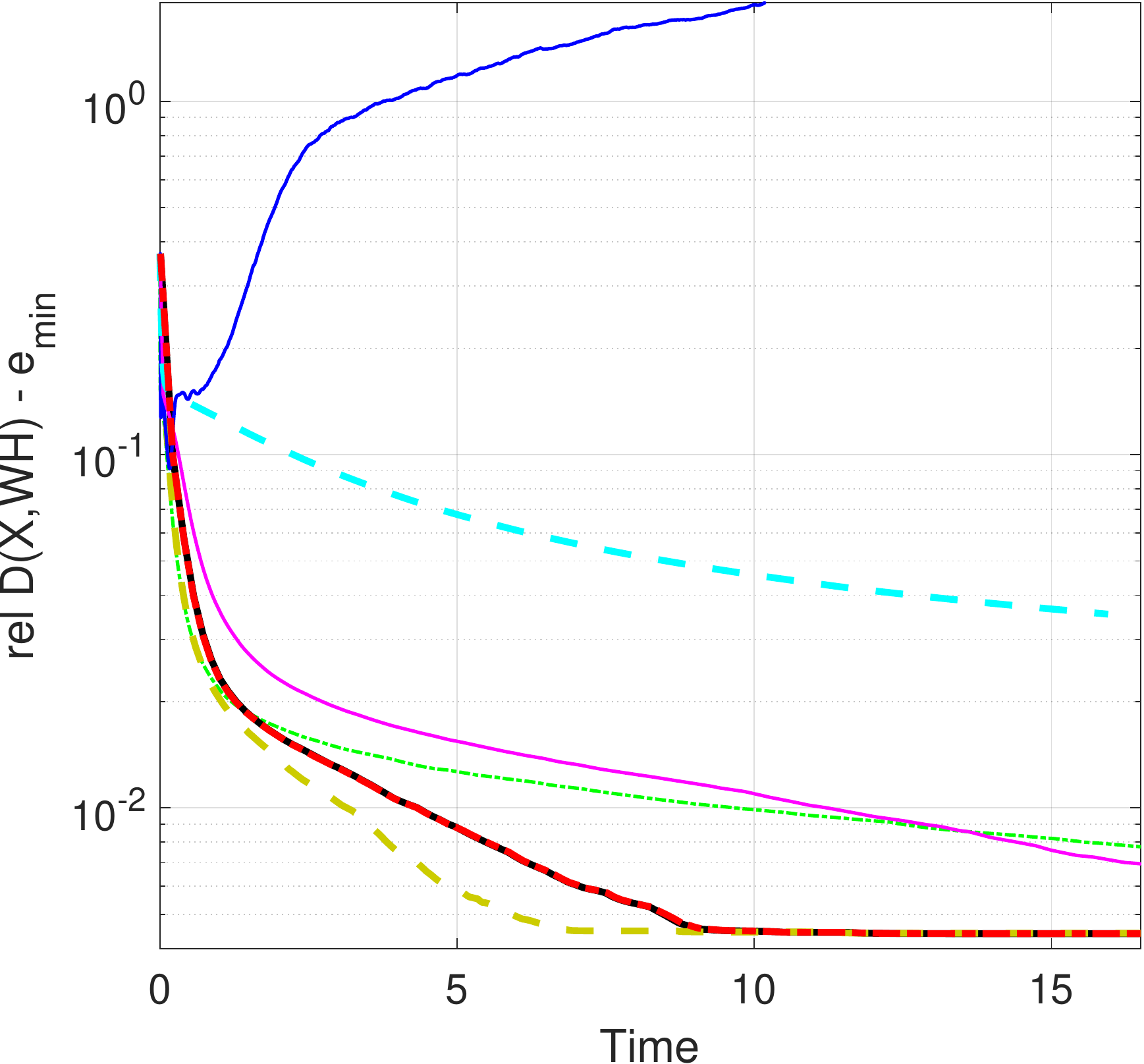} 
\\ 
\includegraphics[width=0.43\textwidth]{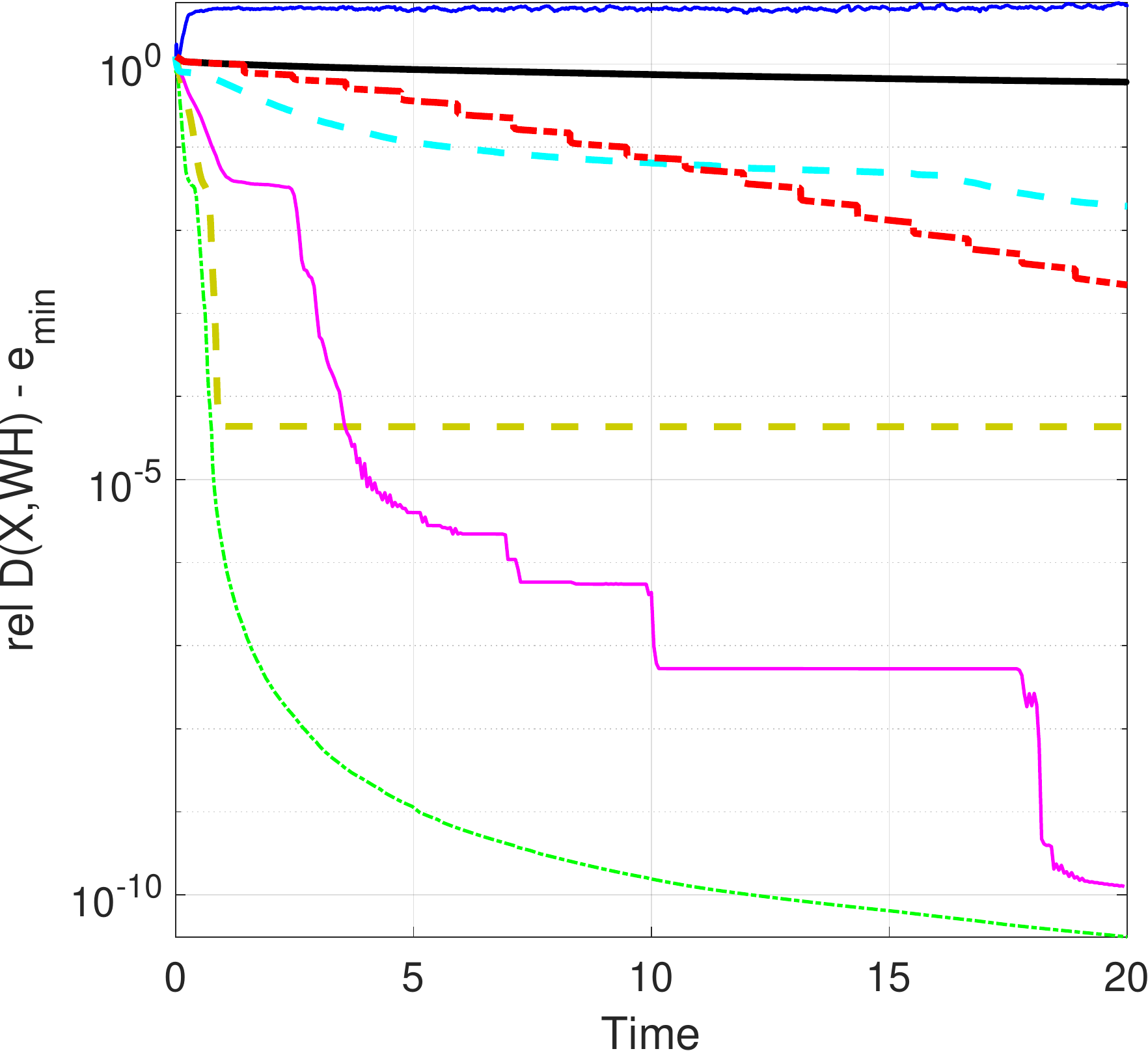} 
& 
\includegraphics[width=0.43\textwidth]{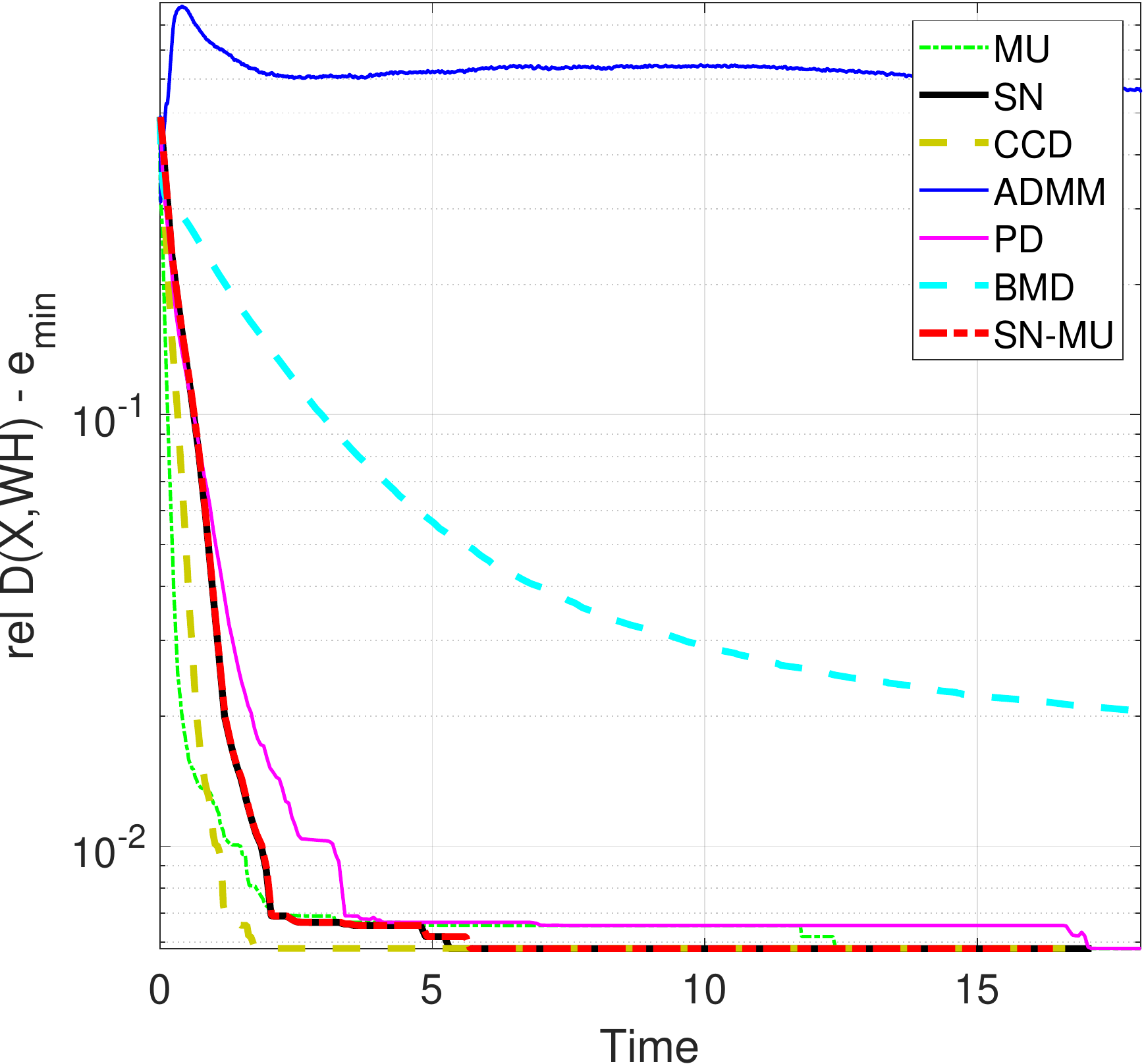}
\end{tabular}
\caption{\revise{Median value} of the error measure $E(t)$ on $500\times 500$ low-rank matrices for various KL NMF algorithms (see Table~\ref{tab:algorithms}).  
The left column corresponds to noiseless input matrices, and the right column to noisy matrices using the Poisson distribution. 
The top row corresponds to dense factors $(\ell = 1)$, 
the middle row to slightly sparse factors $(\ell = 0.9)$, and the bottom row to sparse factors $(\ell = 0.3)$.  
\label{fig:lowrank_500x500}} 
\end{center}
  \vspace{-0.13in}
\end{figure*}

 \begin{table}[t]\centering 
 \caption{\revise{Average error, standard deviation over 200 different runs for each type of low-rank synthetic data sets (100 runs for each type with the size $200\times 200$ or $500\times 500$). The middle column correspond to noiseless input matrices, the right column to noisy matrices using the Poisson distribution.  The second row corresponds to dense factors $(\ell = 1)$, 
the third row to slightly sparse factors $(\ell = 0.9)$, and the bottom row to sparse factors $(\ell = 0.3)$.}} 
   \label{tab:accuracy_lowrank}
 \begin{tabular}{|c|c|c|c|} 
 \hline 
 Algorithm &  mean $\pm$ std (noiseless matrices) &  mean $\pm$ std (noisy matrices) \\ 
 \hline 
ADMM &  $\mathbf{2.257\, 10^{-4} \pm 2.519\, 10^{-4}}$ &  $8.030\, 10^{-1} \pm 7.445\, 10^{-3}$   \\ 
CCD & $1.119\, 10^{-3} \pm 9.145\, 10^{-4}$ &   $\mathbf{8.025\, 10^{-1} \pm 7.966\, 10^{-3}}$   \\ 
SN & $9.432\, 10^{-4} \pm 9.106\, 10^{-4}$  &  $8.026\, 10^{-1} \pm 8.014\, 10^{-3}$  \\ 
 MU & $1.423\, 10^{-3} \pm 1.154\, 10^{-3}$ & $8.032\, 10^{-1} \pm 8.380\, 10^{-3}$  \\ 
 BMD & $7.962\, 10^{-2} \pm 7.325\, 10^{-2}$   &  $8.172\, 10^{-1} \pm 1.890\, 10^{-2}$   \\ 
 PD & $1.803\, 10^{-3} \pm 1.551\, 10^{-3}$  &   $8.033\, 10^{-1} \pm 8.527\, 10^{-3}$  \\ 
 SN-MU &  $9.133\, 10^{-4} \pm 8.780\, 10^{-4}$ &  $8.026\, 10^{-1} \pm 8.009\, 10^{-3}$   \\
\hline 
ADMM & $7.107\, 10^{-2} \pm 1.129\, 10^{-1}$ &   $8.497\, 10^{-1} \pm 1.149\, 10^{-2}$   \\ 
CCD & $7.807\, 10^{-5} \pm 3.642\, 10^{-6}$  &  $\mathbf{7.563\, 10^{-1} \pm 1.671\, 10^{-2}}$    \\ 
SN &  $2.013\, 10^{-2} \pm 1.335\, 10^{-2}$ & $7.564\, 10^{-1} \pm 1.661\, 10^{-2}$   \\ 
 MU & $8.795\, 10^{-6} \pm 1.088\, 10^{-5}$ &  $7.573\, 10^{-1} \pm 1.759\, 10^{-2}$ \\ 
 BMD &$5.950\, 10^{-2} \pm 5.917\, 10^{-2}$   &  $7.703\, 10^{-1} \pm 2.715\, 10^{-2}$   \\ 
 PD &  $\mathbf{6.156\, 10^{-9} \pm 1.917\, 10^{-8}}$ &  $7.567\, 10^{-1} \pm 1.702\, 10^{-2}$    \\ 
 SN-MU &   $4.515\, 10^{-5} \pm 6.725\, 10^{-5}$ &   $7.564\, 10^{-1} \pm 1.661\, 10^{-2}$  \\
  \hline 
ADMM &  $5.493\, 10^{-1} \pm 5.671\, 10^{-1}$  &  $7.292\, 10^{-1} \pm 1.869\, 10^{-1}$   \\ 
CCD &$1.340\, 10^{-3} \pm 9.142\, 10^{-3}$  &  $5.762\, 10^{-1} \pm 3.462\, 10^{-2}$   \\ 
SN &  $2.911\, 10^{-1} \pm 2.790\, 10^{-1}$  & $\mathbf{5.761\, 10^{-1} \pm 3.468\, 10^{-2}}$   \\ 
 MU &$\mathbf{3.525\, 10^{-4} \pm 4.985\, 10^{-3}}$  & $5.759\, 10^{-1} \pm 3.502\, 10^{-2}$  \\ 
 BMD &   $1.592\, 10^{-2} \pm 2.041\, 10^{-2}$ &   $5.830\, 10^{-1} \pm 3.924\, 10^{-2}$  \\ 
 PD &$9.711\, 10^{-4} \pm 6.808\, 10^{-3}$  &  $5.769\, 10^{-1} \pm 3.419\, 10^{-2}$   \\ 
 SN-MU & $1.468\, 10^{-3} \pm 6.274\, 10^{-3}$  &  $\mathbf{5.761\, 10^{-1} \pm 3.468\, 10^{-2}}$  \\
\hline 
\end{tabular} 
  \vspace{-0.1in}
 \end{table} 
 
  \begin{table}[t]\centering 
 \caption{Ranking among 1200 different runs for low-rank synthetic data sets. } 
   \label{tab:rank_lowrank}
 \begin{tabular}{|c|c|} 
 \hline 
 Algorithm &  ranking  \\ 
 \hline 
ADMM &   (301,  6, 97, 121, 55, 117, 503)   \\ 
CCD &   (52, 131, 302, 344, 328, 32, 11)   \\ 
SN &  (148, 273, 249, 89, 109, 147, 185)   \\ 
 MU &  (140, 243, 172, 198, 369, 71,  7)   \\ 
 BMD &    (1,  0,  2, 26, 184, 522, 465)   \\ 
 PD &  (222, 217, 161, 164, 122, 293, 21)   \\ 
 SN-MU &    (\textbf{343}, 328, 214, 257, 32, 18,  8)   \\
\hline 
\end{tabular} 
  \vspace{-0.1in}
 \end{table} 
 For these low-rank synthetic data sets, let us discuss the behaviour of the various algorithms:  
\begin{itemize}

\item ADMM is not stable, it diverges in many cases. 
Although the results are for ADMM with the penalty parameter $\varrho=1$, we also tried other values for $\varrho$ but the algorithm still did not converge for the other values we have tried.  
ADMM has the largest number of worst solutions (503 out of 1200). Note however that it also has a high number of best solutions (301 out of 1200), because it performs well in the simple scenario when the factors $W$ and $H$ are dense in the absence of noise.  
In summary, ADMM is unstable but, when it converges, it provides good solutions.   

\item 
CCD performs very well, among the best in most cases. 
When looking at Table~\ref{tab:accuracy_lowrank}, we observe that CCD has average results, having most of its solutions ranked third to fifth (out of 7). However, it has the second lowest average error right after SN-MU. 

\item SN monotonically decreases the objective function, as proved in Proposition~\ref{prop:SN_nonincreasing}. However, in some cases, it may converge rather slowly. Its ranking are well distributed, hence it performs close to the average.  

\item SN-MU improves SN and performs well, in all cases among the best algorithms. In fact, it obtained the lowest objective function values among all algorithm, 343 out of the 1200 experiments. Also, it generates only 8 out of 1200 solutions as the worst solutions.  
Also, it has the lowest relative error on average.  

\item MU performs well, on average better than the other algorithms (it only provides 7 worst solutions, out of 1200 tests). It has a low relative error on average, ranked third, right behind SN-MU and CDD.

\item BMD converges very slowly (it only provides a solution among the third best ones in 3 cases out of 1200). 
Although it is the only algorithm with global convergence guarantee (Proposition~\ref{thm:KLNMFconvergence}), this comes at the expense of slow convergence. 

\item PD performs well, although it provides in many cases (293 out of 1200) the second worst solutions. 

\end{itemize}

In summary, SN-MU performs on average the best, followed by 
the MU, CCD, PD, and SN. 
ADMM does not always converge but can produce good solutions. 
BMD has strong convergence guarantees but converges very slowly. 
However, there is no clear winner, and, depending on the types of data sets, some algorithms might perform better than others. 

\subsubsection{Full-rank synthetic data sets}
 
We generate a full-rank synthetic data set $V$ by the Matlab command $V=rand(m,n)$. Results for full-rank synthetic $200 \times 200$ \revise{(with $r=10$)} and $500 \times 500$ \revise{(with $r=20$)} data sets  are reported in Figure~\ref{fig:fullrank}. We also report the average, the standard deviation of the relative errors and the ranking vector over 200 runs (100 runs for each size) in Table~\ref{tab:accuracy_fullrank}.

\begin{figure*}[ht]
\begin{center}
\begin{tabular}{cc}
\includegraphics[width=0.43\textwidth]{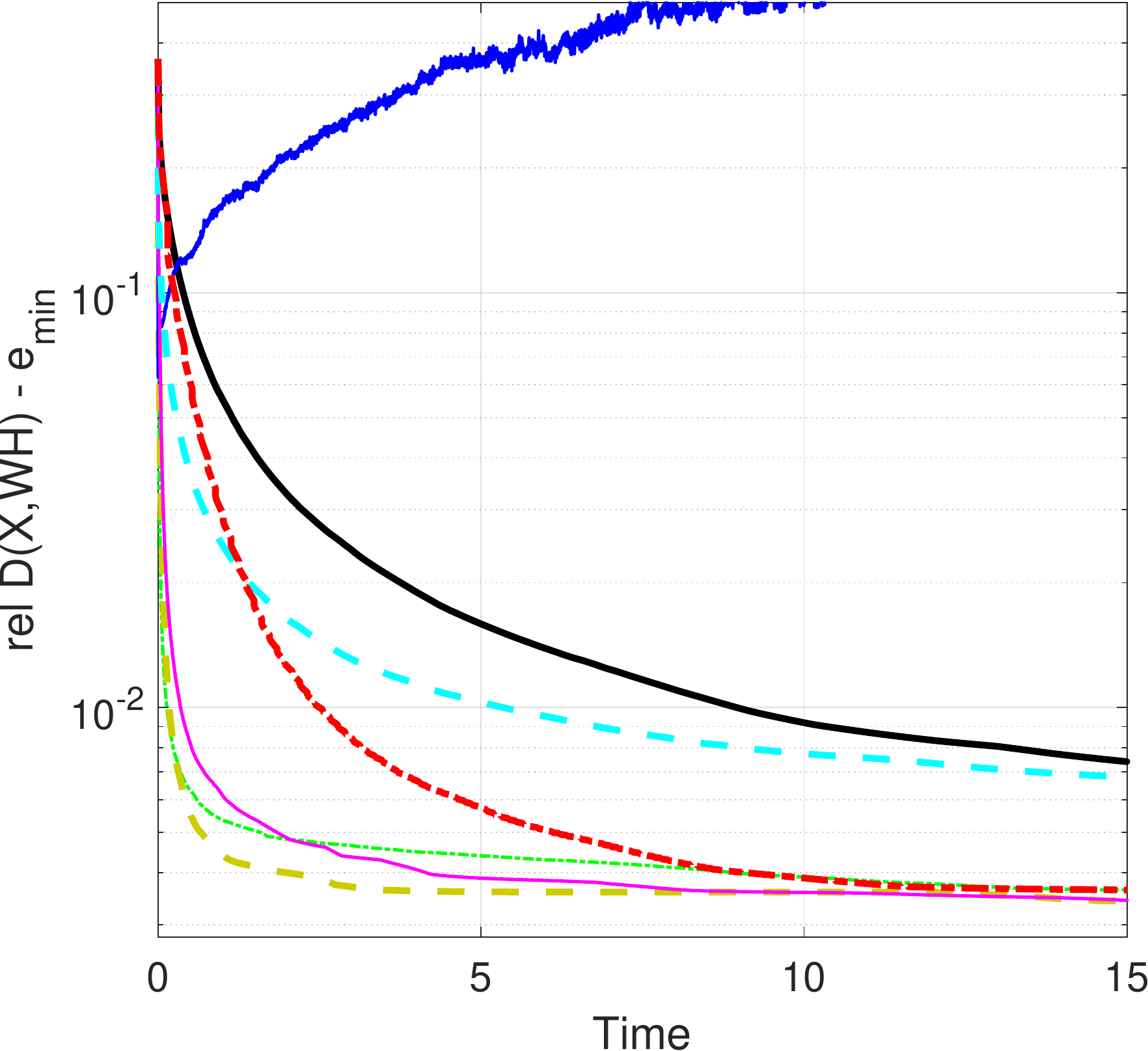}  & 
\includegraphics[width=0.43\textwidth]{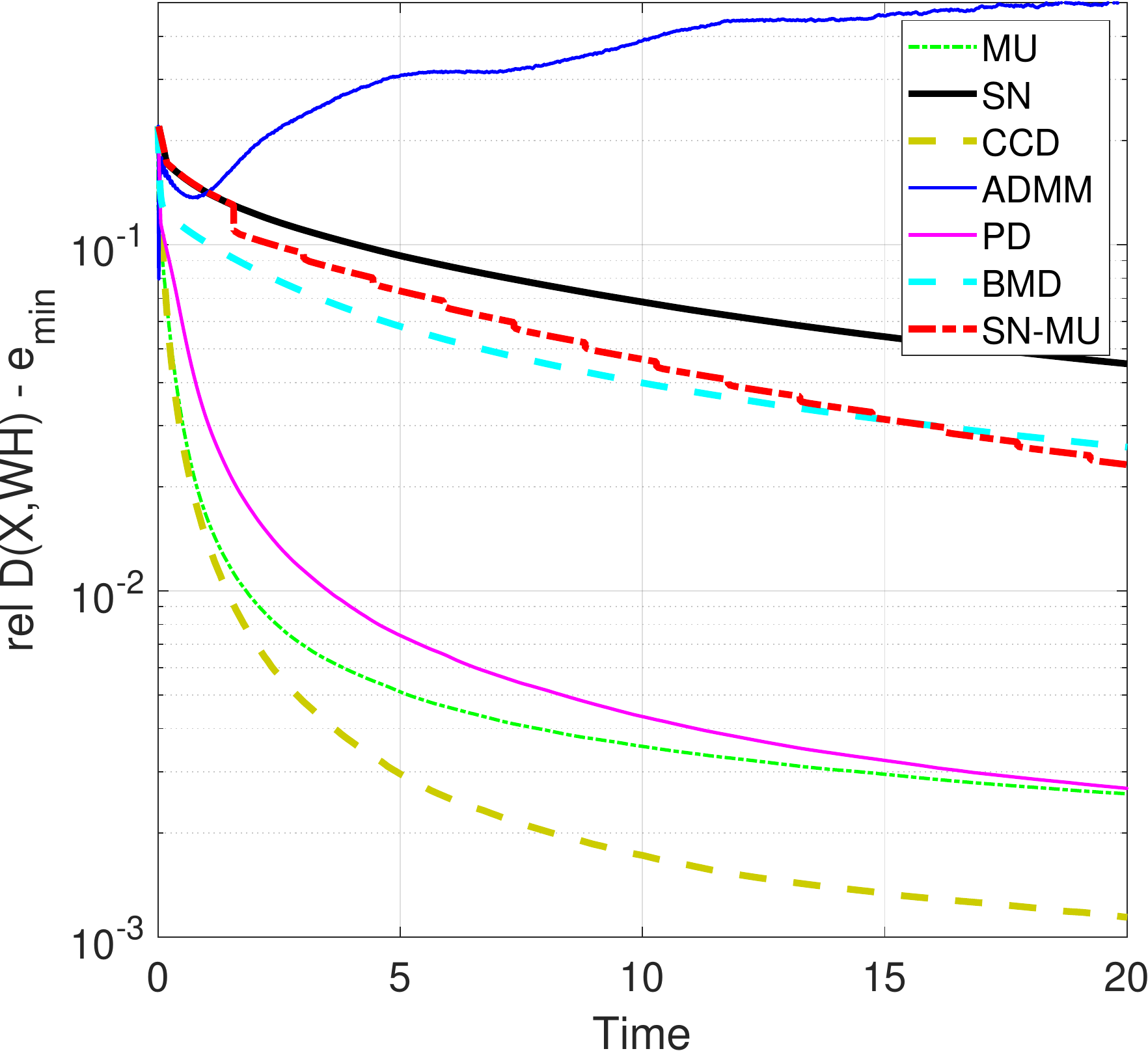} 
\end{tabular}
\caption{\revise{Median value} of $E(t)$ on Full-rank synthetic data sets:  left - $200 \times 200$,  right - $500 \times 500$.
\label{fig:fullrank}} 
\end{center}
  \vspace{-0.13in}
\end{figure*}

 \begin{table}[t]\centering 
 \caption{Average error, standard deviation and ranking among 200 different runs for full-rank synthetic data sets. } 
   \label{tab:accuracy_fullrank}
 \begin{tabular}{|c|c|c|} 
 \hline 
 Algorithm &  mean $\pm$ std & ranking  \\ 
 \hline 
ADMM &  $9.369\, 10^{-1} \pm 2.774\, 10^{-2}$ &  (0,  0,  0,  0,  0,  0, 200)   \\ 
CCD &  $\mathbf{8.709\, 10^{-1} \pm 1.418\, 10^{-2}}$ &  (\textbf{138}, 27, 23, 12,  0,  0,  0)   \\ 
SN &  $8.923\, 10^{-1} \pm 3.218\, 10^{-2}$ &  (0,  0,  0,  0, 47, 153,  0)   \\ 
 MU &  $8.717\, 10^{-1} \pm 1.480\, 10^{-2}$ &  (12, 62, 91, 35,  0,  0,  0)   \\ 
 BMD &  $8.835\, 10^{-1} \pm 2.370\, 10^{-2}$ &  (0,  0,  0,  0, 153, 47,  0)   \\ 
 PD &  $8.716\, 10^{-1} \pm 1.486\, 10^{-2}$ &  (37, 93, 58, 12,  0,  0,  0)   \\ 
 SN-MU &  $8.793\, 10^{-1} \pm 2.241\, 10^{-2}$ &  (13, 18, 28, 141,  0,  0,  0)   \\ 
\hline 
\end{tabular} 
  \vspace{-0.1in}
 \end{table} 
 
 We observe that the behavior can be quite different than in the low-rank cases. In particular, 
\begin{itemize}
\item CCD now clearly performs best in term of convergence speed and average relative error. 

\item The MU, CCD and SN-MU performs well, while BMD and SN perform  relatively poorly (they never produce the best solution). 
 
\item ADMM never converges, and produces the worst solution in all cases. 

\end{itemize}

\vspace{-0.15in}
\subsection{Experiments with real data sets}

We report in this section experiments on various widely used real data sets that are summarized in Table~\ref{tab:audio}. \revise{We use $r=10$ for all real data sets.} 
\begin{table}[t]\centering 
 \caption{Real data sets} 
   \label{tab:audio}
 \begin{tabular}{|c|c|c|} 
 \hline 
 Data set &  size & run time (seconds) \\ 
 \hline 
 \multicolumn{3}{|c|}{Audio~\cite{gillis2019distributionally}}\\
  \hline 
mary&     $129 \times 586$ & 25  \\ 
prelude JSB & $129 \times 2 \,582$  & 45   \\ 
ShanHur sunrise & $129 \times 4 \,102$    & 75 \\ 
voice cell & $129 \times 2 \, 181$  &   45\\ 
\hline 
 \multicolumn{3}{|c|}{Images~\cite{Lee99,hoyer2004non}} \\
 \hline
 cbclim  &$361 \times 2\,429$ & 50 \\
  ORLfaces&  $10\, 304 \times 400$ & 95 \\
   \hline 
 \multicolumn{3}{|c|}{Documents~\cite{zhong2005generative}} \\
 \hline
  classic&  $7\, 094 \times 41\, 681$ & 500 \\
hitech&  $2 \,301 \times 10 \,080$ & 500 \\
reviews&  $4\, 069 \times 18\, 483$ & 500 \\
sports&  $8 \,580 \times 14\, 870$ & 500 \\
ohscal&  $11\, 162 \times 11 \,465$ & 500 \\
la1&  $3\, 204 \times 31\, 472$ & 500 \\
\hline
\end{tabular} 
  \vspace{-0.1in}
 \end{table} 
 
 \vspace{-0.1in}
\subsubsection{Audio data sets} 

For each audio data set, we generate 30 random initial points. We report the evolution of the  median of $E(t)$ in Figure~\ref{fig:audio}, and report the average, the standard deviation of the relative errors and the ranking vector over 120 runs (30 runs for each audio data set) in Table~\ref{tab:accuracy_audio}. 

\begin{figure*}[ht]
\begin{center}
\begin{tabular}{cc}
\includegraphics[width=0.43\textwidth]{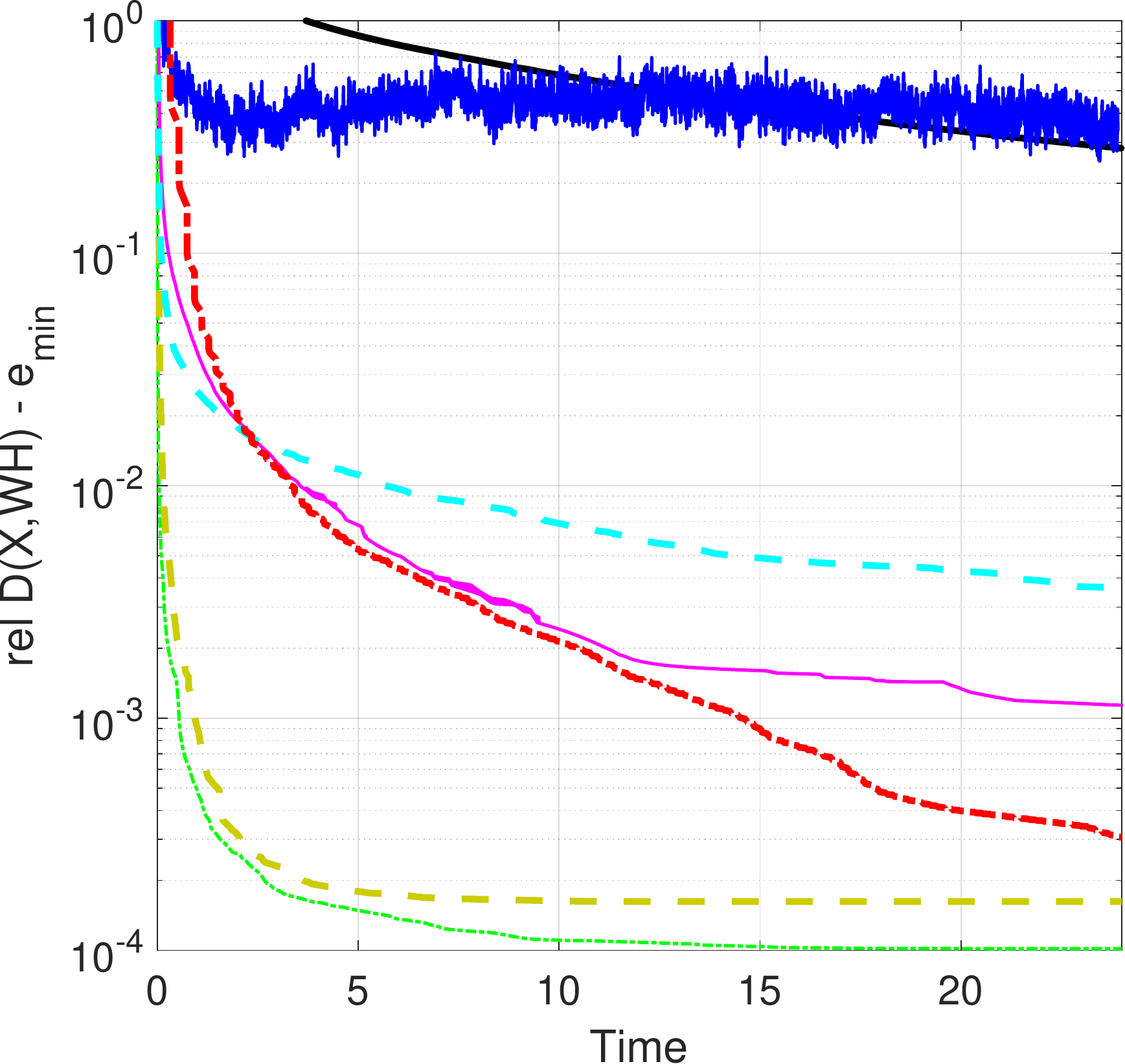}  & 
\includegraphics[width=0.43\textwidth]{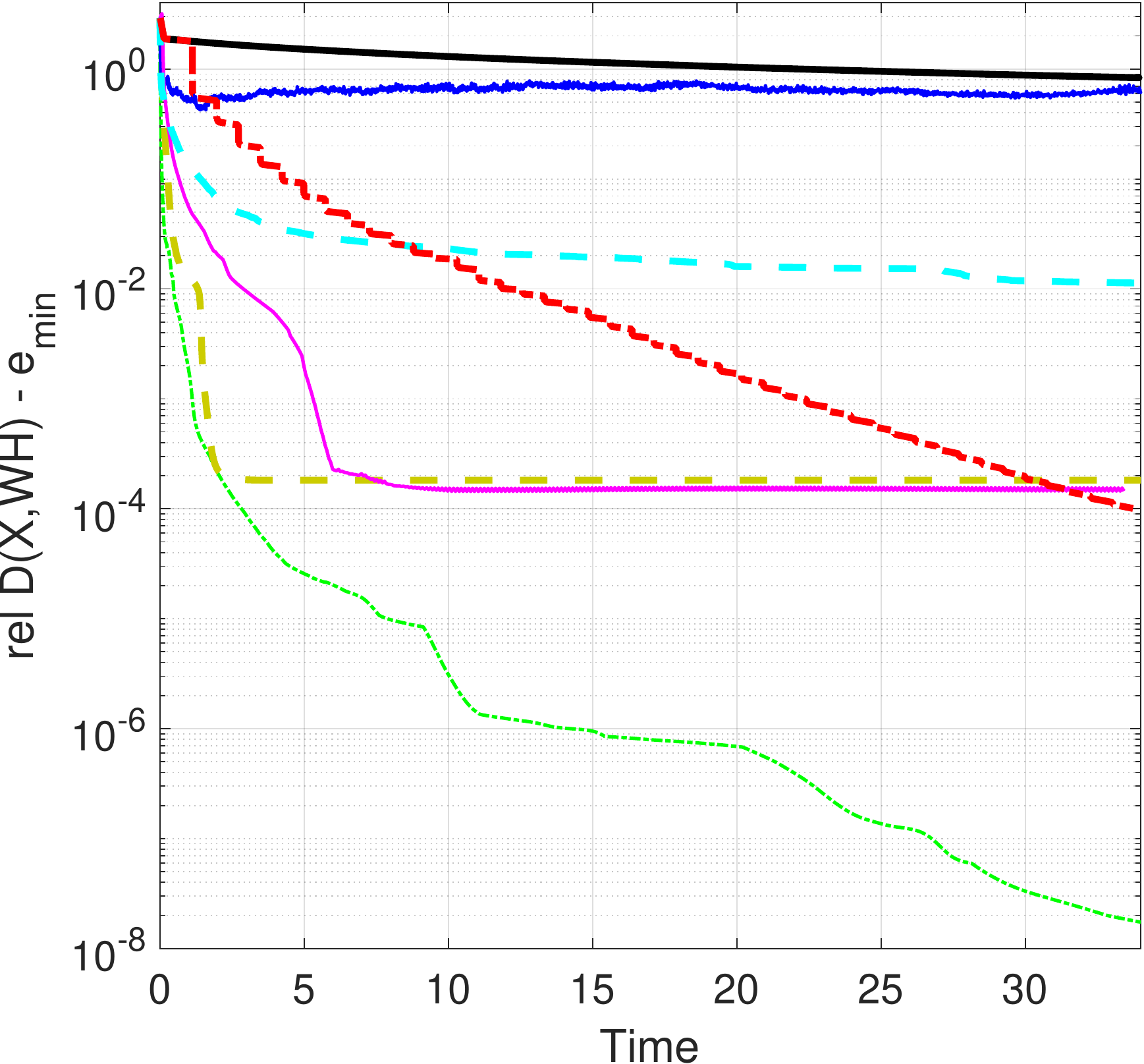} \\
\includegraphics[width=0.43\textwidth]{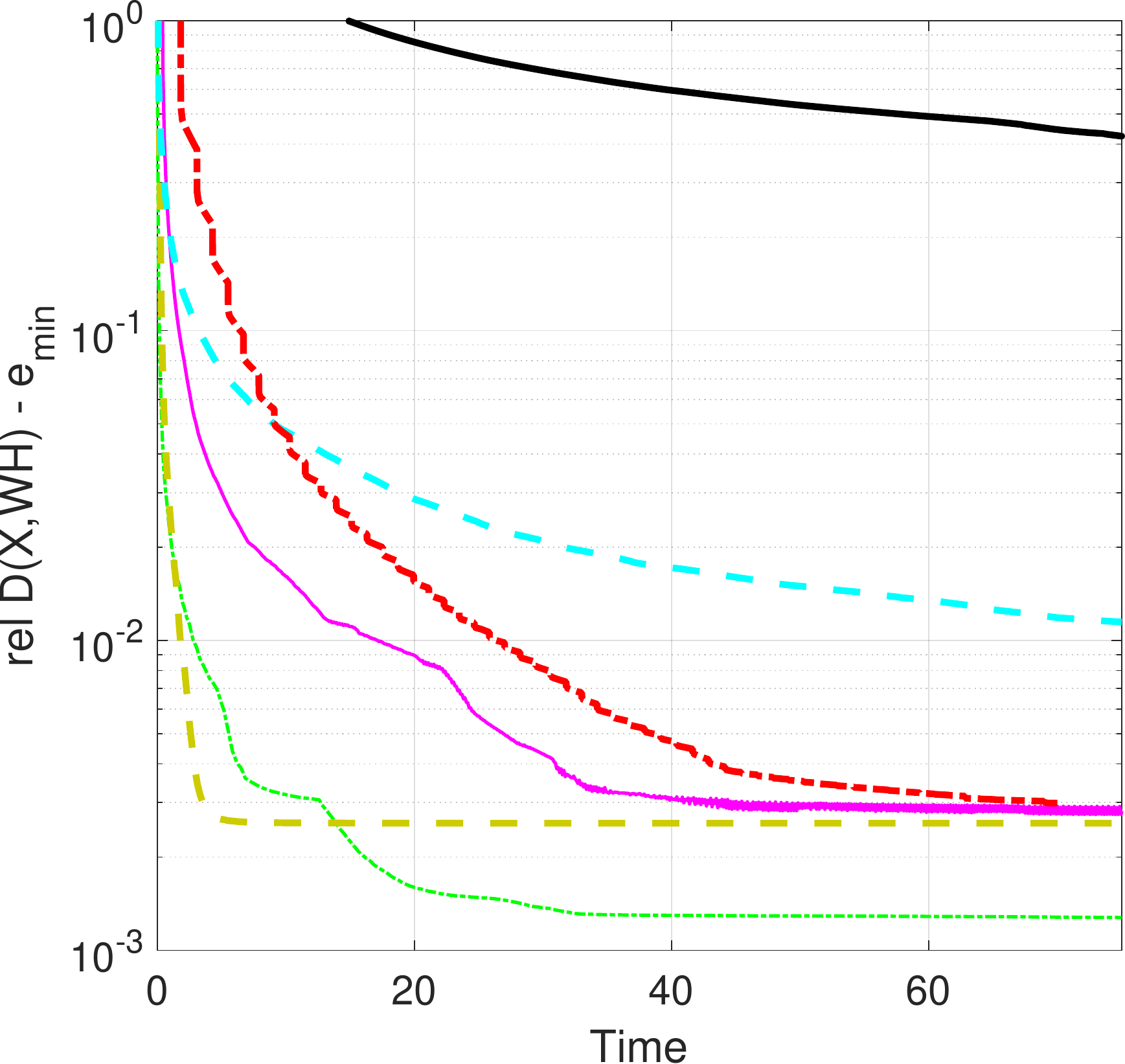}  & 
\includegraphics[width=0.43\textwidth]{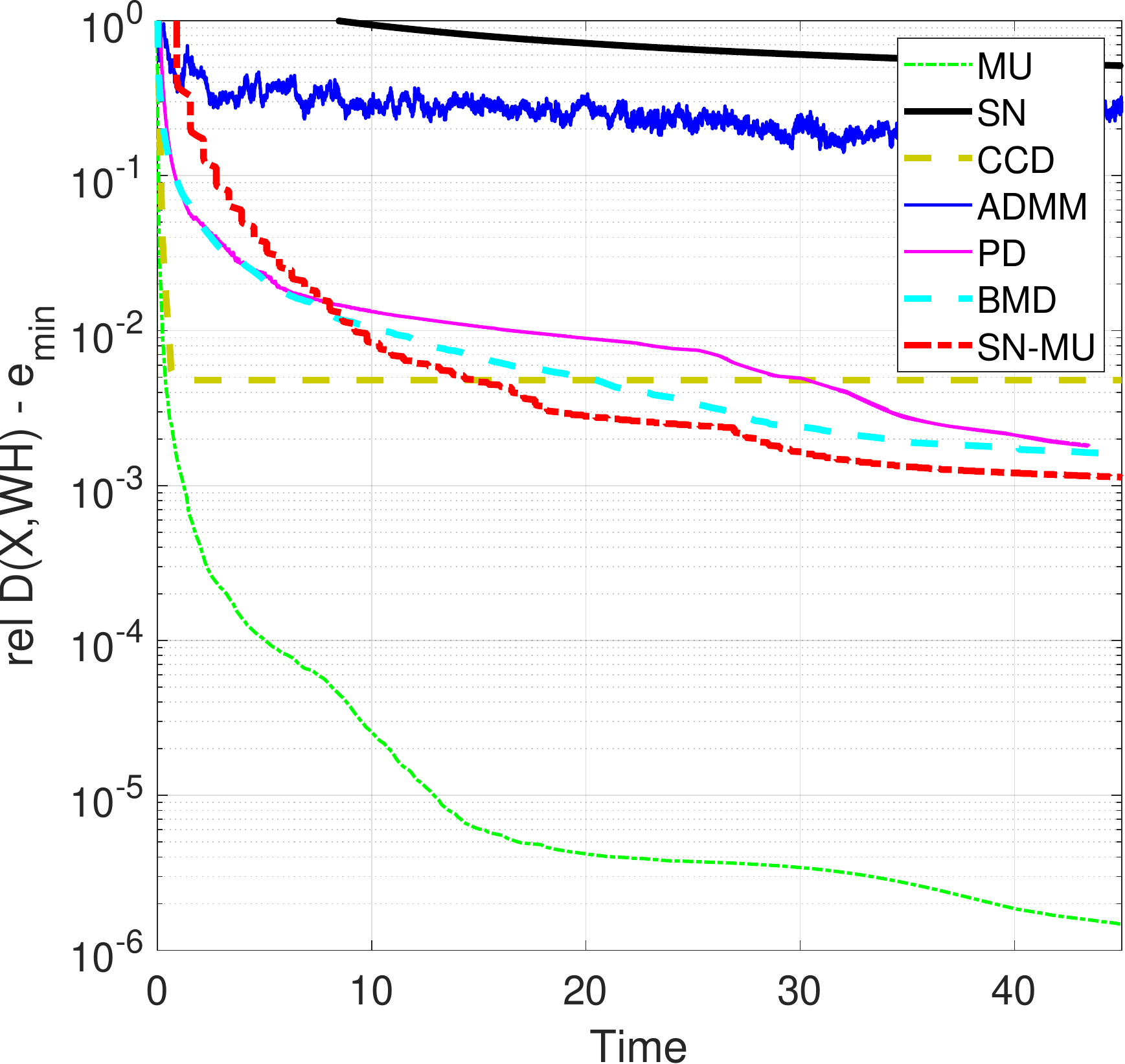} 
\end{tabular}
\caption{Median value of $E(t)$ on audio data sets:  top left - mary,  top right - prelude JSB, bottom left - ShanHur sunrise, bottom right - voice cell.
\label{fig:audio}} 
\end{center}
\vspace{-0.13in}
\end{figure*} 

\begin{table}[t]\centering 
 \caption{Average error, standard deviation and ranking among 120 different runs for audio data sets.  \label{tab:accuracy_audio}  } 
 \begin{tabular}{|c|c|c|} 
 \hline 
 Algorithm &  mean $\pm$ std & ranking  \\ 
 \hline 
ADMM &  $2.533\,10^{-1} \pm 1.916\,10^{-1}$ &  (0,  0,  0,  0,  1, 110,  9)   \\ 
CCD &  $7.588\,10^{-2} \pm 4.853\,10^{-2}$ &  (5, 42, 33, 23, 17,  0,  0)   \\ 
SN &  $5.213\,10^{-1} \pm 2.002\,10^{-1}$ &  (0,  0,  0,  0,  0,  9, 111)   \\ 
 MU &  $\mathbf{7.468\,10^{-2} \pm 4.849\,10^{-2}}$ &  (\textbf{89},  7, 13,  7,  4,  0,  0)   \\ 
 BMD &  $7.898\,10^{-2} \pm 4.946\,10^{-2}$ &  (0, 12, 21, 25, 61,  1,  0)   \\ 
 PD &  $7.642\,10^{-2} \pm 4.841\,10^{-2}$ &  (9, 20, 23, 46, 22,  0,  0)   \\ 
 SN-MU &  $7.498\,10^{-2} \pm 4.815\,10^{-2}$ &  (17, 39, 30, 19, 15,  0,  0)   \\ 
\hline 
\end{tabular}
\vspace{-0.13in} 
 \end{table} 
 
 As for full-rank synthethic data sets, ADMM diverges while SN and BMD converges slowly. 
 However, for these data sets, MU outperforms the other algorithms, followed by  CCD, SN-MU, and PD.

\vspace{-0.15in}
\subsubsection{Image data sets} 

As for audio data sets, we generate 30 random initial points. We report the result in Figure~\ref{fig:image} and Table~\ref{tab:accuracy_image}. 

\begin{figure*}[ht]
\begin{center}
\begin{tabular}{cc}
\includegraphics[width=0.43\textwidth]{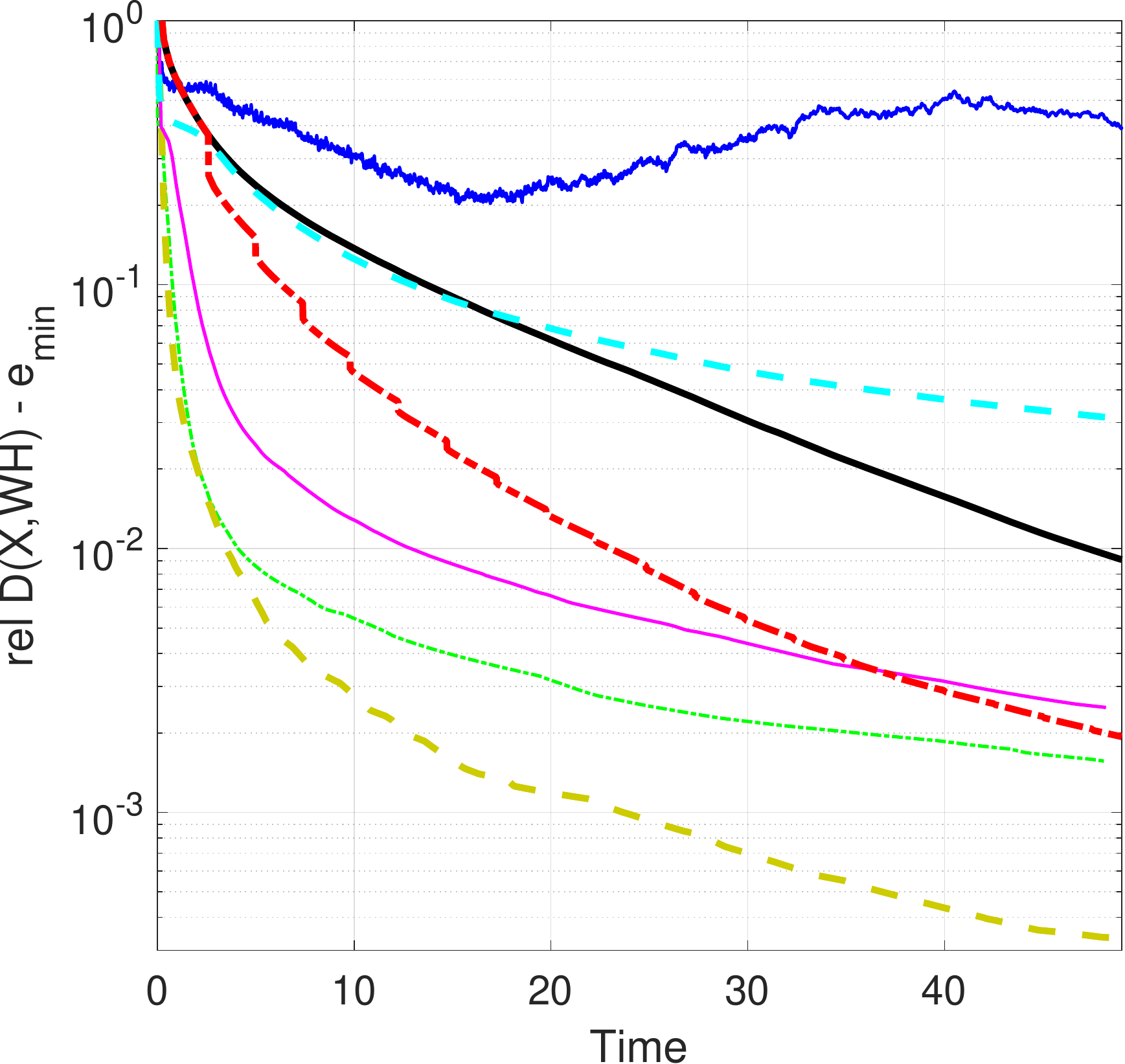}  & 
\includegraphics[width=0.43\textwidth]{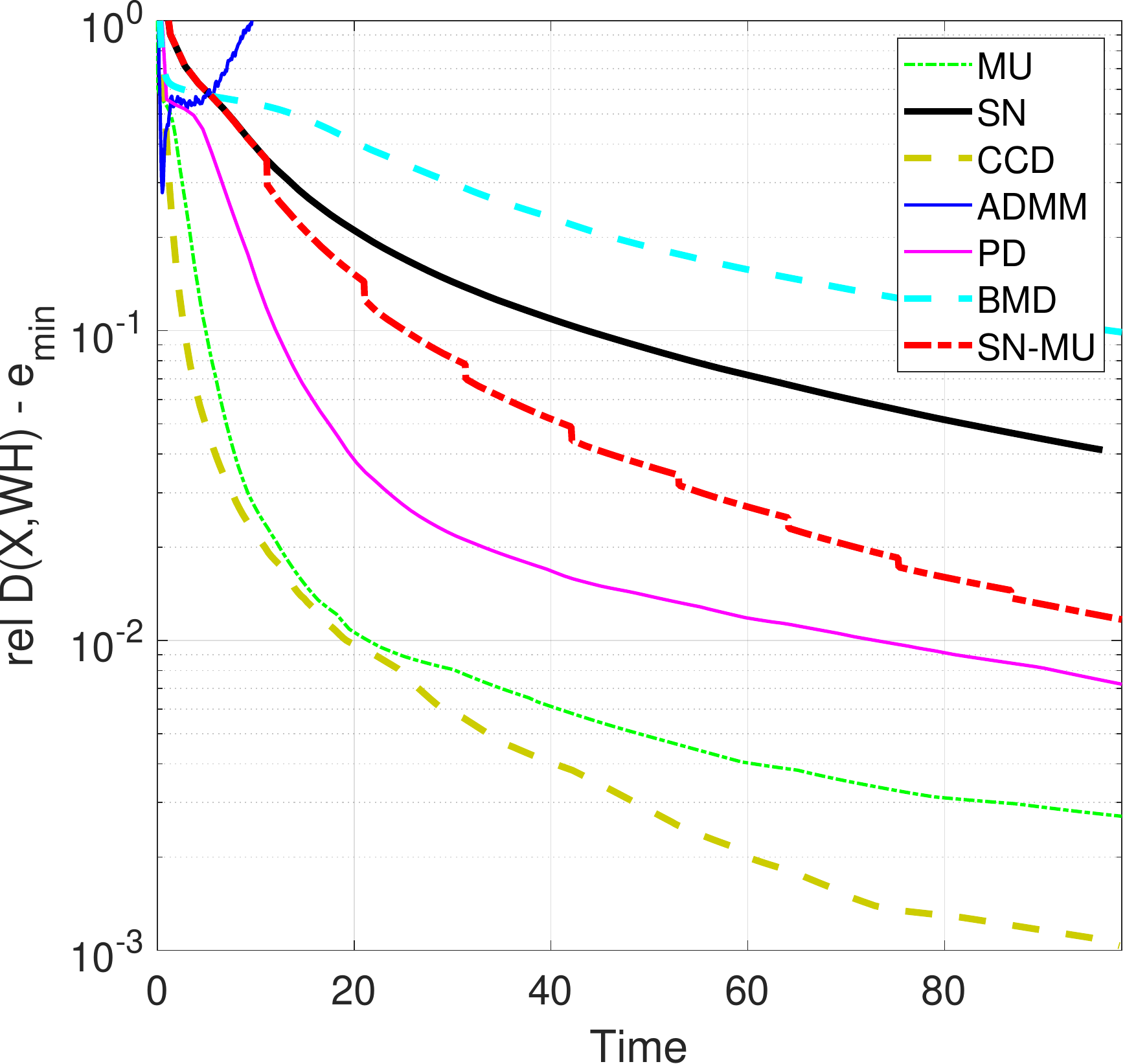} 
\end{tabular}
\caption{\revise{Median value} of $E(t)$ on image data sets:  left - cbclim,  right - ORLfaces.
\label{fig:image}} 
\end{center}
\vspace{-0.13in}
\end{figure*} 

\begin{table}[t]\centering 
 \caption{Average error, standard deviation and ranking among 60 different runs for the 2 image data sets. } 
   \label{tab:accuracy_image}
 \begin{tabular}{|c|c|c|} 
 \hline 
 Algorithm &  mean $\pm$ std & ranking  \\ 
 \hline 
ADMM &  $5.492 \, 10^{-1} \pm 2.167 \, 10^{-1}$ &  (0,  0,  0,  0,  0,  0, 60)   \\ 
CCD &  $\mathbf{3.339 \, 10^{-1} \pm 1.529 \, 10^{-1}}$ &  (\textbf{51},  8,  0,  1,  0,  0,  0)   \\ 
SN &  $3.569 \, 10^{-1} \pm 1.679 \, 10^{-1}$ &  (0,  0,  0,  0, 60,  0,  0)   \\ 
 MU &  $3.354 \, 10^{-1} \pm 1.533 \, 10^{-1}$ &  (8, 37, 10,  5,  0,  0,  0)   \\ 
 BMD &  $3.977 \, 10^{-1} \pm 1.867 \, 10^{-1}$ &  (0,  0,  0,  0,  0, 60,  0)   \\ 
 PD &  $3.379 \, 10^{-1} \pm 1.550 \, 10^{-1}$ &   (0,  4, 34, 22,  0,  0,  0)   \\ 
 SN-MU &  $3.387 \, 10^{-1} \pm 1.565 \, 10^{-1}$ &  (1, 11, 16, 32,  0,  0,  0)   \\ 
\hline 
\end{tabular} 
\vspace{-0.1in}
 \end{table} 

As for full-rank and audio data sets, 
ADMM diverges, and SN and BMD converge slowly. 
 CCD outperforms the other algorithms followed by MU, SN-MU and PD (in that order).

\subsubsection{Document data sets}

For each document data set, we generate 10 random initial points and record the final relative errors (the reason of using only 10 initializations is that these data sets are rather large, and the computational time is high--we used 500 seconds for each run as shown on Table~\ref{tab:audio}). 
We report the average, the standard deviation of the final relative errors and the ranking vector over 60 runs (10 runs for each document data set) in Table~\ref{tab:accuracy_document}. 

\begin{table}[t]\centering 
 \caption{Average error, standard deviation and ranking among 60 different runs on the document data sets. } 
   \label{tab:accuracy_document}
 \begin{tabular}{|c|c|c|} 
 \hline 
 Algorithm &  mean $\pm$ std & ranking  \\ 
 \hline 
ADMM &  $2.333\, 10^{0}  \pm 3.276\, 10^{-1} $ &  (0,  0,  0,  0,  0, 10, 50)   \\ 
CCD &  $\mathbf{5.366 \, 10^{-1}  \pm 3.483\, 10^{-2}}$ &  (\textbf{37},  3, 16,  4,  0,  0,  0)   \\ 
SN &  $5.388\, 10^{-1}  \pm 3.443\, 10^{-2}$ &  (3, 13, 30, 14,  0,  0,  0)   \\ 
 MU &  $5.402\, 10^{-1}  \pm 3.394\, 10^{-2}$ &  (4,  9,  7, 40,  0,  0,  0)   \\ 
 BMD &  $6.057\, 10^{-1}  \pm 2.476\, 10^{-2}$ &  (0,  0,  0,  0, 60,  0,  0)   \\ 
 PD &  $1.783\, 10^{0} \pm 2.270\, 10^{0} $ &  (0,  0,  0,  0,  0, 50, 10)   \\ 
 SN-MU &  $5.370\, 10^{-1}  \pm 3.472\, 10^{-2} $ &  (16, 35,  7,  2,  0,  0,  0)   \\ 
\hline 
\end{tabular} 
\vspace{-0.13in}
 \end{table} 
 We observe that, CCD performs best, followed by SN-MU, in terms of the average relative error. 
 
 \paragraph{Performance profiles.} 
\revise{ 
Figure~\ref{fig:performance} reports the performance profiles for {the experiments} for synthetic (left) and real (right) data sets. It displays the performance of each algorithm as a function of $\rho \geq 0$.  
For a given value of $\rho$, we define the performance of an algorithm as follows 
\begin{equation} \label{eq:perf}
{\rm performance} (\rho) 
=
 \frac{\sharp\big\{ \text{solution } (W,H) 
 \mid 
 {\rm rel} \, D(V,WH) -  {\rm rel} \, D(V,W^*H^*) \leq \rho   \big\}}{\sharp {\rm runs}}, 
\end{equation}   
where a solution  $(W,H)$ is the final pair obtained within the total allotted time by the algorithm, and $(W^*,H^*)$ is the best solution obtained among all algorithms using the \emph{same} initialization. Hence, for example, 
performance(0) aggregates the values of the rankings at the first position provided in 
Tables~\ref{tab:rank_lowrank}-\ref{tab:accuracy_fullrank} for the synthetic data sets, and in 
Tables~\ref{tab:accuracy_audio}-\ref{tab:accuracy_document} 
for the real data sets.  

Performance profiles allow us to compare the algorithms meaningfully over different instances~\cite{dolan2002benchmarking}, that is, different matrices and initializations in our case. 
Looking at the curves for $\rho$ equal zero, we observe the percentage of the time each algorithm was able to obtain the best solution. The right of the curve, as $\rho$ increases, reports the robustness of an algorithm, that is, the percentage of times it was able to obtain a solution close to the best solutions found among all algorithms. In all cases, the higher the curve the better. 
\begin{figure*}[ht]
\center
\begin{tabular}{cc}
\includegraphics[width=0.475\textwidth]{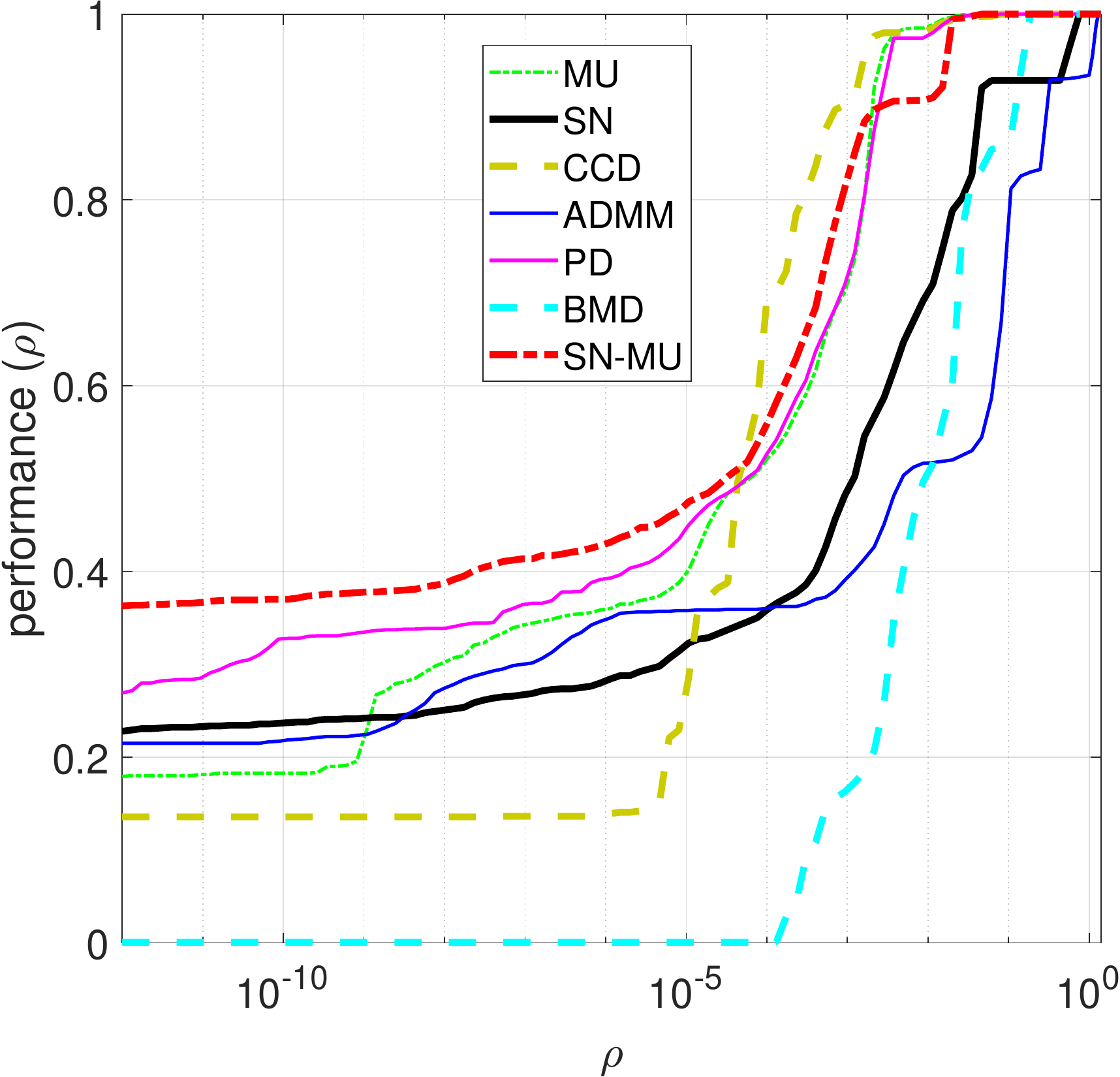} 
& 
\includegraphics[width=0.465\textwidth]{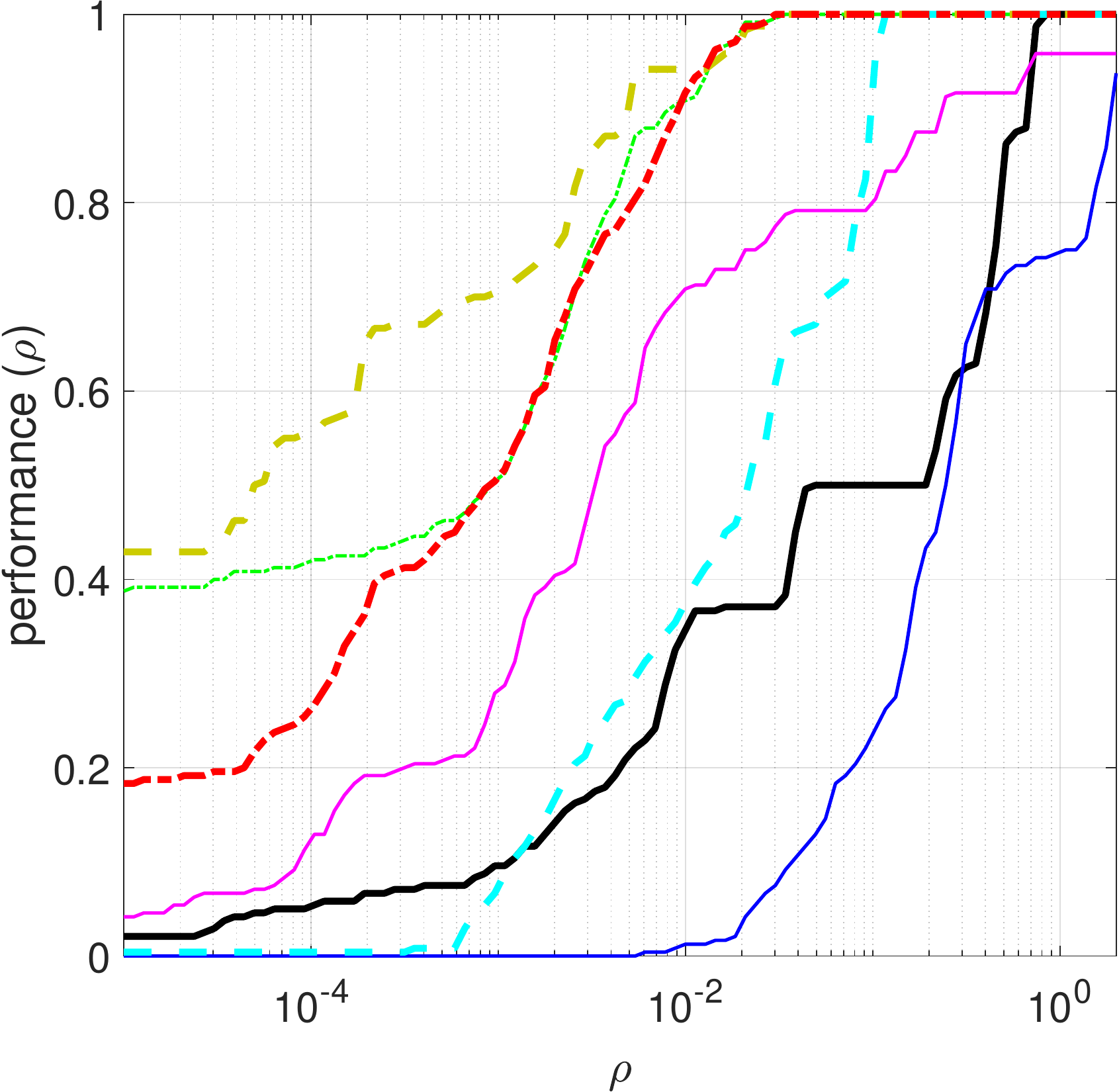}
\end{tabular} 
\caption{Performance profile for all experiments; see~\eqref{eq:perf}. On the left: synthetic data set, on the right: real data sets. 
\label{fig:performance}} 
\end{figure*}

Figure~\ref{fig:performance} confirms our observations: CCD, MU and SN-MU perform the best. On synthetic data sets, SN-MU provides the best solutions in most cases (left part of the left figure), while on real data sets, MU and CCD perform better (left part of the right figure). 
In terms of robustness, the three algorithms are comparable: their curves get closer together as $\rho$ increases.   
Note that, as we have observed as well, PD performs on par with  CCD, MU and SN-MU on synthetic data sets, while it is less effective for real data sets as it does not reach a performance of 1 even for $\rho = 1$.   
}

\subsection{Conclusions of the numerical experiments}

Surprisingly, for KL NMF, the behaviour of the algorithms can be highly dependent on the input data. 
For example, CCD performs best for images and documents, while MU performs best for audio data sets. 
To the best of our knowledge, this has not been reported in the literature. 
As far as we know, most papers focus only on a few numerical examples, introducing an undesirable bias towards certain algorithms. 
It is interesting to note that for Fro NMF, such different behaviors depending on the input data has not been reported in the literature, despite numerous studies.  
The reason is most likely that KL NMF is a more difficult optimization problem, for which the subproblem in $W$ and $H$, although convex, does not admit an $L$-Lipschitz gradient. 

The main take-home message of our experiments is that CCD, SN-MU and MU appear to be the most reliable algorithms for KL NMF, performing among the best in most scenarios. 

\vspace{-0.1in}
\section{Conclusion} \label{sec:conclusion}

In this paper, we have presented important properties of KL NMF that are useful to analyze algorithms (Section~\ref{sec:KLprop}).  
Then, we have reviewed existing algorithms, and proposed three new algorithms: 
a block mirror descent (BMD) method  with global convergence guarantees, 
a scalar Newton-type (SN) algorithm which monotonically decreases the objective function, and an hybridization between SN and MU. 
Finally, in Section~\ref{sec:experiment}, we performed extensive numerical experiments on synthetic and real data sets. 
Although no KL NMF algorithms clearly outperforms the others, it appears that the CCD, MU and SN-MU provide the best results on average. 

\revise{
\section*{Declarations}

\paragraph{Funding:}  This work was supported by the European Research Council (ERC starting grant n$^\text{o}$ 679515), and by the Fonds de la Recherche Scientifique - FNRS and the Fonds Wetenschappelijk Onderzoek - Vlaanderen (FWO) under EOS Project no O005318F-RG47. 

\paragraph{Conflicts of interest/Competing interests:} Not applicable. 

\paragraph{Availability of data and material:} 
 The data sets that support the findings of this study are freely available online, and available from the corresponding author, Nicolas Gillis, upon reasonable request.  

\paragraph{Code availability:} The codes used to perform the experiments in this paper are available from \url{https://github.com/LeThiKhanhHien/KLNMF}.  

}

\revise{
\section*{Acknowledgements} 
We thank the anonymous reviewers for their insightful comments that helped us improve the paper. We also thank Peter Carbonetto for helpful feedback and useful references. 
}

\bibliographystyle{spmpsci}  
\bibliography{biblio} 
\vspace{-0.1in}
\appendix
\section{Technical proofs}
\vspace{-0.1in}
\subsection{Proof of Proposition \ref{prop:sol_existence}} \label{app:proofprop1}
\textbf{Proposition \ref{prop:sol_existence}: } (A) Given $\varepsilon\geq 0$ and a nonnegative matrix $V$, Problem~\eqref{KLNMF_perturbed} attains its minimum, that is, it has at least one optimal solution. 

(B) Let $ \nu=\sum_{i=1}^m\sum_{j=1}^n V_{ij}$. Denote $D^*(V,\varepsilon)$ be the optimal value of 
Problem~\eqref{KLNMF_perturbed}. We have 
$D^*(V,\varepsilon) \leq D^*(V,0) + (\min\{ n+ mr, m+nr\} \sqrt{\nu}+mn\varepsilon)\varepsilon.
$

\begin{proof}
Problem~\eqref{KLNMF_perturbed} can be rewritten as follows
\begin{align}
\label{KLNMF_perturbed_2} 
\min_{W,H} &  
\quad D\big(V|(W+\varepsilon ee^\top)(H+\varepsilon ee^\top) \big)\\ 
 \text{ such that } &  W_{ik}\geq 0, H_{kj} \geq 0 \text{ for } i \in [m], k \in [r],j \in [n]. \nonumber 
\end{align} 
We note that 
\begin{equation}
\label{eq:D_epsilon}
(W+\varepsilon ee^\top)(H+\varepsilon ee^\top)=WH+ \varepsilon (W ee^\top + ee^\top H)+\varepsilon^2 ee^\top.
\end{equation}

Let us now prove the parts (A) and (B) 
of Proposition~\ref{prop:sol_existence} separately. 

(A) We note that 
$$
\frac{1}{\nu}D\big(V|(W+\varepsilon ee^\top)(H+\varepsilon ee^\top)\big)=D\Big(\frac{V}{\nu}\Big|\big(\frac{W}{\sqrt{\nu}}+\frac{\varepsilon}{\sqrt{\nu}} ee^\top\big)\big(\frac{H}{\sqrt{\nu}}+\frac{\varepsilon}{\sqrt{\nu}} ee^\top\big)\Big).
$$
 Then we have
 \begin{equation}
 \label{eq:Dnu}
  D^*(V,\varepsilon)=\nu D^*(V/\nu,\varepsilon/\sqrt{\nu}).
\end{equation} 
Let us now consider Problem~\eqref{KLNMF_perturbed_2} with $V$ such that $\sum_{i=1}^m\sum_{j=1}^n V_{ij}=1$. In the following, we separate the proof into 2 cases: $\varepsilon=0$, which corresponds to the original KL NMF Problem~\eqref{KLNMF}, and $\varepsilon>0$, for which the objective of Problem~\eqref{KLNMF_perturbed_2} is finite at every pair of non-negative matrices $(W,H)$. 

~

\noindent\underline{Case 1}: $\varepsilon=0$. We use the methodology in the proof of \cite[Proposition 2.1]{FS2006}.
We first observe that $WH=\sum_{i=1}^r W_{:,i} H_{i,:}$, hence without loss of generality we can consider the case when $H$ has no zero rows; otherwise we could then consider Problem~\eqref{KLNMF} with smaller rank than $r$.  Given a feasible solution $(W,H)$ of \eqref{KLNMF}, let $h_k=\sum_{j=1}^n H_{kj}$ and let ${\rm Diag}(h)$ be the diagonal matrix with its diagonal being $h$. We then have $(\tilde{W},\tilde{H})$, where $\tilde{W}= W {\rm Diag}(h)$ and $\tilde{H}= ({\rm Diag}(h))^{-1} H$, is also a feasible solution of \eqref{KLNMF} and it preserves the objective value since $\tilde{W} \tilde{H}=WH$. We observe that $\sum_{j=1}^n \tilde H_{kj}=1$. 
Hence, we can restrict our search for the optimal solution to the set 
$$
\mathcal A_1=\{(W,H): W,H \geq 0, 
He = e\}. 
$$
We note that $ \lim_{x\to 0} (x - V_{ij} \log x) = +\infty$ for a given $V_{ij}>0$, and $x \mapsto (x - V_{ij} \log x)$ is a decreasing function when $x<V_{ij}$. Therefore, there exists a positive constant $\delta$ such that $(WH)_{ij} \geq \delta$ for all $i,j$ with $V_{ij}>0$; otherwise, if for all $\delta>0$ there exists $(i,j)$ with $V_{ij}>0$ that $(WH)_{ij} \leq \delta$  then we can choose arbitrary small $\delta<\min\{V_{ij}:V_{ij}>0\}$ which makes $(WH)_{ij} - V_{ij} \log (WH)_{ij}$ arbitrary large. Therefore, we can further restrict the constraint set of \eqref{KLNMF} to the set $$
\mathcal A_2=\big\{(W,H): (W,H)\in \mathcal A_1, (WH)_{ij}\geq \delta \forall (i,j) \in \{(i,j):V_{ij} >0\} \big\}.
$$
It follows from \cite[Theorem 2]{lee2001algorithms} (see also Section \ref{sec:MU}) that, given a feasible solution $(W,H)$, a multiplicative update step $W_{ik}^+= W_{ik} \big(\sum_{l=1,V_{il}>0}^n \frac{H_{kl} V_{il}}{(WH)_{il}} \big)\big/ \big(\sum_{l=1}^n H_{kl} \big) $, for $i\in [m], k\in [r]$, supposed to be well-defined, 
decreases the objective function. Hence, given $(W,H)\in \mathcal A_2$, we observe that
$$
\sum_{k=1}^m W_{ik}^+=\sum_{k=1}^m W_{ik} \sum_{l=1}^n \frac{H_{kl} V_{il}}{(WH)_{il}}=\sum_{l=1}^n \sum_{k=1}^m  \frac{W_{ik}H_{kl} V_{il}}{(WH)_{il}}=\sum_{l=1}^n V_{il}.
$$
Therefore, we can further restrict our search for the optimal solutions of \eqref{KLNMF} to
$$
\mathcal A_3=\big\{(W,H): (W,H)\in \mathcal A_1, W e =V e, (WH)_{ij}\geq \delta \forall (i,j) \in \{(i,j):V_{ij} >0\} \big\}.
$$
By choosing appropriate $\delta$, we note that $(W,1/n ee^\top) \in \mathcal{A}_3$ with $W_{ik}=1/m \sum_{l} V_{il}$ for $i\in [m], k\in [r]$, then the set $\mathcal{A}_3$ is non-empty. We see that $\mathcal{A}_3$ is closed and bounded, and $D(V|WH))$ is continuous on $\mathcal{A}_3$, hence, Problem~\eqref{KLNMF} has at least one solution.

~

\noindent\underline{Case 2}:  $\varepsilon>0$. 
When $\varepsilon>0$, we see $(0, 0)$ is a feasible solution of \eqref{KLNMF_perturbed_2}. Let 
$$C=f_\varepsilon(0 , 0)= mnr\varepsilon^2-\log (r\varepsilon^2) +\sum_{i=1}^m\sum_{j=1}^n V_{ij}\log V_{ij}-1.
$$
We then can restrict our search for the optimal solutions of Problem~\eqref{KLNMF_perturbed_2} to the constraint set  
$\mathcal A_4=\{(W,H): W,H \geq 0, f_\varepsilon(W,H)\leq C\}. $
Using inequality $\log x \leq x-1$ for all $x>0$, we obtain 
\begin{align*}
\begin{split}
f_\varepsilon(W,H) &\geq \sum_{i=1}^m\sum_{j=1}^n (1-V_{ij})\big((WH)_{ij}+\varepsilon (W ee^\top + ee^\top H)_{ij}+\varepsilon^2 \big )+ \sum_{i=1}^m\sum_{j=1}^n V_{ij}\log V_{ij} \\
&\geq \sum_{i=1}^m\sum_{j=1}^n (1-V_{ij})\big (\varepsilon \sum_{k}W_{ik}+\varepsilon \sum_{k}H_{kj} +\varepsilon^2\big)+ \sum_{i=1}^m\sum_{j=1}^n V_{ij}\log V_{ij}\\
& \geq (mn-1)\varepsilon^2+ (1-v_{\max})\varepsilon\big(n \sum_{ik}W_{ik} + m \sum_{kj}H_{kj}\big)	+ \sum_{i=1}^m\sum_{j=1}^n V_{ij}\log V_{ij},
\end{split}
\end{align*}
where  $v_{\max}=\max_{ij} V_{ij}$, we use $(WH)_{ij}\geq 0$ in the second inequality and $\sum_{ij}V_{ij}=1$ in the third inequality. 
Hence, from $f_\varepsilon(W,H)\leq C$, $W$ and $H$ are bounded. 
Therefore, $\mathcal A_4$ is bounded. 
We also see that $\mathcal A_4$ is closed; hence, it is a compact set. The objective of \eqref{KLNMF_perturbed_2} is continuous on this set since $\varepsilon>0$. Therefore, Problem~\eqref{KLNMF_perturbed_2} has at least one solution. 

(B) Let us fix $\varepsilon>0$. Note that all points of $\mathcal{A}_3$ are feasible solutions of Problem~\eqref{KLNMF_perturbed_2}. We hence have 
\begin{equation}
\label{eq:bound_forD}
\begin{split}
  D^*(V,\varepsilon)&\leq \min_{(W,H)\in \mathcal{A}_3} f_\varepsilon(W,H)\\&\leq \min_{(W,H)\in \mathcal{A}_3} D(V|WH) 
 + \max_{(W,H)\in \mathcal{A}_3} (f_\varepsilon(W,H)-D(V|WH))\\
&= D^*(V,0) + \max_{(W,H)\in \mathcal{A}_3} (f_\varepsilon(W,H)-D(V|WH)),
\end{split}
\end{equation}
where in the first inequality we use the fact that the optimal value over bigger set does not exceed the optimal value over smaller set. 
On the other hand, it follows from \eqref{eq:D_epsilon} and the case $\sum_{i,j} V_{ij}=1$ that, for $(W,H)\in \mathcal A_3$, we have
\begin{align*}
\begin{split}
f_\varepsilon(W,H)-D(V|WH)&= \sum_{i=1}^m\sum_{j=1}^n \big(\varepsilon \sum_{k=1}^r W_{ik} + \varepsilon \sum_{k=1}^r H_{kj} + \varepsilon^2\big)\\
&\qquad\qquad -\sum_{i=1}^m\sum_{j=1}^n V_{ij} \log\frac{((W+\varepsilon  ee^\top)(H+\varepsilon ee^\top))_{ij}}{(WH)_{ij}}\\
&\leq n  \varepsilon +  mr \varepsilon +  mn \varepsilon^2.
\end{split}
\end{align*}
Hence $D^*(V,\varepsilon) \leq D^*(V,0) +  n  \varepsilon +  mr \varepsilon +  mn \varepsilon^2$.  By exchanging the role of $W$ and $H^\top $ and noting that $D(V|WH)=D(V^\top |H^\top  W^\top )$, we can prove a similar bound 
$D^*(V,\varepsilon) \leq D^*(V,0) +  m  \varepsilon +  nr \varepsilon +  mn \varepsilon^2$.
Together with \eqref{eq:Dnu}, we obtain Result (B). 
\end{proof}
\subsection{BSUM framework for the MU} \label{app:bsum}

MU is a block successive upper-bound minimization algorithm (BSUM) in which each column of $H$ and each row of $W$ are updated by minimizing majorized functions of the KL objective. Let us first introduce BSUM,  then derive MU for solving~\eqref{KLNMF_perturbed}. 

BSUM was proposed in  \cite{Razaviyayn2013} to solve the minimization problem in~\eqref{eq:compositev2}. 
Putting in the notations of \eqref{eq:compositev2}, first, let us formally define a majorized function.  
\begin{definition}
\label{def:majorize}
A function $u_i : \mathcal X_i \times \mathcal X \to \mathbb R$ is said to majorize $f(x)$ if 
\begin{equation}
\label{eq:majorize}
\begin{split}
&u_i (x_i , x) = f(x), \forall \, x \in \mathcal X, 
\\ 
&u_i (y_i , x) \geq f(x_1,\ldots,x_{i-1},y_i,x_{i+1},\ldots,x_s), \forall \, y_i \in \mathcal{X}_i,  x \in \mathcal X.
\end{split}
\end{equation}
\end{definition}
 Using the majorized functions, at iteration $k$, BSUM fixes the latest values of block $j\ne i$ and updates block $x_i$  by  
\begin{equation}
\label{eq:MM}
x_i^{k}=\arg\min_{x_i\in \mathcal X_i} u_i(x_i,x_1^k,\ldots,x_{i-1}^k,x_i^{k-1}, x_{i+1}^{k-1},\ldots,x_s^{k-1}).
\end{equation}
From Definition \ref{def:majorize} we have
\begin{align*}
\begin{split}
f(x^k)&=u_1(x_1^k,x_1^k,\ldots,x_s^k) \geq u_1(x_1^{k+1},x_1^k,\ldots,x_s^k)\\&\geq f(x_1^{k+1},x_2^k,\ldots,x_s^k)=u_2(x_2^k,x_1^{k+1},x_2^k,\ldots,x_s^k)\\
&\geq u_2(x_2^{k+1},x_1^{k+1},x_2^k,\ldots,x_s^k) \geq f(x_1^{k+1},x_2^{k+1},\ldots,x_{s-1}^k,x_s^k) \geq  \ldots\geq f(x^{k+1}).
\end{split}
\end{align*}
In other words, BSUM produces a non-increasing sequence $\{f(x^k)\}$.  In the following, we give a majorized function for $D(t|Wh)$ defined in \eqref{Dh}. 
\begin{proposition} \cite[Lemma 3]{Lee99} 
\label{prop:MUmajorize}
Let 
\begin{align*}
u_{\rm MU}(h,h^k):=\sum_{i=1}^m\Big((Wh)_i -t_i\sum_{l=1}^r \frac{W_{il}(h^k)_l}{(Wh^k)_i}\Big(\log (W_{il}h_l)-\log \frac{W_{il}(h^k)_l}{(Wh^k)_i}\Big )+ t_i \log t_i -t_i \Big).
\end{align*}
Then $u_{\rm MU}(h,h^k)$ majorizes the function $h\mapsto D(t|Wh)$. 
\end{proposition}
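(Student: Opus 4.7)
The plan is a direct application of Jensen's inequality to the convex function $-\log$. Writing out the definition of $D(t|Wh)$, one has
\begin{align*}
D(t|Wh) = \sum_{i=1}^m \Bigl((Wh)_i - t_i \log(Wh)_i + t_i \log t_i - t_i\Bigr),
\end{align*}
so the $(Wh)_i$ term and the constant $t_i \log t_i - t_i$ already appear verbatim in $u_{\rm MU}(h,h^k)$. The whole task therefore reduces to producing an upper bound for the single concave term $-\log(Wh)_i = -\log\sum_l W_{il} h_l$ that agrees with $-\log(Wh^k)_i$ when $h = h^k$.

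For the upper bound, I would introduce the weights $\lambda_l^{(i)} := \frac{W_{il}(h^k)_l}{(Wh^k)_i}$, which are nonnegative and, by construction, satisfy $\sum_{l=1}^r \lambda_l^{(i)} = 1$ (assuming $(Wh^k)_i>0$, which is guaranteed whenever $u_{\rm MU}$ is evaluated in the interior of the positive orthant, as is the case for iterates of the MU). Then write
\begin{align*}
(Wh)_i \;=\; \sum_{l=1}^r W_{il} h_l \;=\; \sum_{l=1}^r \lambda_l^{(i)} \cdot \frac{W_{il} h_l}{\lambda_l^{(i)}},
\end{align*}
and apply Jensen's inequality to the convex function $-\log$, yielding
\begin{align*}
-\log (Wh)_i \;\leq\; -\sum_{l=1}^r \lambda_l^{(i)} \log\frac{W_{il} h_l}{\lambda_l^{(i)}}
\;=\; -\sum_{l=1}^r \frac{W_{il}(h^k)_l}{(Wh^k)_i}\Bigl(\log(W_{il} h_l) - \log\frac{W_{il}(h^k)_l}{(Wh^k)_i}\Bigr).
\end{align*}
Multiplying by $t_i \geq 0$ and summing over $i$, then adding the common $(Wh)_i$ and constant pieces, produces exactly $u_{\rm MU}(h,h^k) \geq D(t|Wh)$, which is the majorization inequality in Definition~\ref{def:majorize}.

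For tangency, I would just substitute $h = h^k$ in the bound: the inner expression becomes $-\sum_l \lambda_l^{(i)} \log(Wh^k)_i = -\log(Wh^k)_i$ since $\sum_l \lambda_l^{(i)} = 1$, so Jensen's inequality holds with equality and $u_{\rm MU}(h^k, h^k) = D(t|Wh^k)$. The only mildly delicate point — really the only ``obstacle'' worth flagging — is the handling of entries where $W_{il} = 0$ or $(h^k)_l = 0$, which force $\lambda_l^{(i)} = 0$ and require the conventions $0\log 0 = 0$ and the tacit restriction to indices $l$ with $W_{il}(h^k)_l > 0$ in the sum; with these standard conventions (already adopted after~\eqref{KLNMF} in the paper), both the definition of $u_{\rm MU}$ and the Jensen step go through unchanged.
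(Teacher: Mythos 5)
Your proof is correct and is essentially the classical argument of Lee and Seung's Lemma 3, which the paper cites without reproducing: the convexity of $-\log$ applied via Jensen's inequality with the weights $\lambda_l^{(i)} = W_{il}(h^k)_l/(Wh^k)_i$, together with the tangency check at $h=h^k$. Your remark about the conventions needed when $W_{il}(h^k)_l = 0$ is a sensible (and standard) caveat; nothing further is required.
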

When BSUM uses $u_{\rm MU}(\cdot,\cdot)$ to update a column $h$ of $H$ (similarly for a row of $W$) by 
$h^{k+1} = \arg\min_{h\geq \varepsilon} u_{\rm MU}(h,h^k)$, we obtain 
$
(h^{k+1})_l=\max\left\{\frac{(h^k)_l}{\sum_{i=1}^m W_{il}}\sum_{i=1}^m \frac{t_i W_{il}}{(Wh^k)_i}, \varepsilon\right\},
$
leading to the MU. More specifically, the updates of MU for solving Problem~\eqref{KLNMF_perturbed} are 
\begin{equation}
\label{KLMU}
W_{ik}^+= \max\Big\{ W_{ik} \frac{\sum_{l=1}^n \frac{H_{kl} V_{il}}{(WH)_{il}}  }{\sum_{l=1}^n H_{kl}},\varepsilon\Big\},
 H_{kj}^+= \max \Big\{ H_{kj} \frac{\sum_{l=1}^m \frac{W_{lk} V_{lj}}{(WH)_{lj}}  }{\sum_{l=1}^m W_{lk}},\varepsilon \Big\}. 
\end{equation} 
 We note that the non-increasing property of  $f(x^k)$ produced by BSUM for Problem~\eqref{eq:compositev2} does not guarantee the convergence of $ \{x^k\}$. To guarantee some convergence for $ \{x^k\}$, we need $\varepsilon > 0$ for the objective function to be directionally differentiable, which is not the case when $\varepsilon = 0$, $(WH)_{ij}=0$ and $V_{ij}>0$ for some $i,j$.

\vspace{-0.1in}
\subsection{Proof of Theorem \ref{thm:BMDconvergence}} \label{app:BMDconvergence}
\textbf{Theorem \ref{thm:BMDconvergence}:} Suppose Assumption \ref{assump:RelativeSmooth} is satisfied. Let $\{x^k\}$ be the sequence generated by Algorithm \ref{alg:BMD}. 
Let also $\Phi(x)= f (x)+ \sum_i I_{\mathcal X_i}(x_i)$, where $I_{\mathcal X_i}$ is the indicator function of $\mathcal X_i$.  We have 

(i) $\Phi \big(x^k\big)$ is non-increasing;

(ii) Suppose $\mathcal X_i \subset {\rm int\, dom} \, \kappa_i$.  If   $\{x^k\}$ is bounded and $\kappa_i(x_i)$ is strongly convex on bounded subsets of $\mathcal X_i$ that contain $\{x_i^k\}$, 
then every limit point of $\{x^k\}$  is a critical point of $\Phi$;

(iii)  If together with the conditions  in (ii) we assume that  $\nabla \kappa_i$ and $\nabla_i f$ are Lipschitz continuous on bounded convex subsets of $\mathcal X_i$ that contain $\{x_i^k\}$, 
then the whole sequence  $\{x^k\}$  converges to a critical point of $\Phi$. 

\begin{proof}
We follow the methodology established in \cite{Bolte2014} that bases on the following theorem to prove the global convergence of BMD. 
\begin{theorem}
\label{thm:globalex}
Let $\Phi: \mathbb{R}^N\to (-\infty,+\infty]$ be a proper and lower semicontinuous function which is bounded from below. Let $\mathcal{A}$ be a generic algorithm which generates a bounded  sequence $\{x^{k}\}$ by $
x^{k+1}\in \mathcal A(x^{k})$, $k=0,1,\ldots$
Assume that the following conditions are satisfied.

(B1) 
\textbf{Sufficient decrease property}: There exists some $\rho_1>0$ such that
\[\rho_1 \|x^{k}-x^{k+1}\|^2 \leq  \Phi(x^{k}) - \Phi(x^{k+1}),   \forall \,k=0,1,\ldots.
\] 

(B2) \textbf{Boundedness of subgradient}:  There exists some $\rho_2>0$ such that
\[
\|w^{k+1}\|\leq \rho_2 \|x^{k}-x^{k+1}\| , w^{(k)}\in \partial \Phi (x^{k}), \forall k=0,1,\ldots
\]

(B3) \textbf{KL property}: $\Phi$ is a KL function.

(B4) \textbf{A continuity condition}: If a subsequence 
$\{x^{k_n}\}$   converges to  $\bar{x}$  then $\Phi(x^{k_n})\to \Phi(\bar{x})$ as $n\to \infty$.

Then $
\sum_{k=1}^\infty \|x^{k}-x^{k+1}\|<\infty
$
and $\{x^{k}\}$ converges to a critical point of $\Phi$. 
\end{theorem}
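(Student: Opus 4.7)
The plan is to prove the three parts in sequence, building on the relative smoothness inequality and the first-order optimality condition for the BMD step \eqref{eq:BMDstep}, and for (iii) to verify the four hypotheses of Theorem~\ref{thm:globalex}.

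\textbf{Part (i).} Denote $f_i^k$ as in Algorithm~\ref{alg:BMD}, so $f_i^k(x_i^{k+1})$ equals $f$ evaluated at $(x_1^{k+1},\dots,x_i^{k+1},x_{i+1}^k,\dots,x_s^k)$. Since $x_i^{k+1}$ minimizes the right-hand side of \eqref{eq:BMDstep} and the feasible choice $x_i = x_i^k$ gives value $f_i^k(x_i^k)$ (as $\mathcal B_{\kappa_i}(x_i^k,x_i^k)=0$), I would get
\[
\langle \nabla f_i^k(x_i^k),\, x_i^{k+1}-x_i^k\rangle + L_i^k\,\mathcal B_{\kappa_i}(x_i^{k+1},x_i^k) \leq 0.
\]
Combining with Assumption~\ref{assump:RelativeSmooth}(i) applied to $f_i^k$ yields $f_i^k(x_i^{k+1}) \leq f_i^k(x_i^k)$, and telescoping over $i=1,\dots,s$ gives $\Phi(x^{k+1}) \leq \Phi(x^k)$.

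\textbf{Part (ii).} The key is to sharpen the descent. The first-order optimality condition for \eqref{eq:BMDstep}, taken at $y_i=x_i^k$, combined with the three-point identity $\langle \nabla \kappa_i(x_i^{k+1})-\nabla \kappa_i(x_i^k),\, x_i^k-x_i^{k+1}\rangle = -\mathcal B_{\kappa_i}(x_i^{k+1},x_i^k)-\mathcal B_{\kappa_i}(x_i^k,x_i^{k+1})$, and substituted into the relative smoothness inequality, produces the improved descent
\[
f_i^k(x_i^{k+1}) + L_i^k\,\mathcal B_{\kappa_i}(x_i^k,x_i^{k+1}) \leq f_i^k(x_i^k).
\]
Under strong convexity of $\kappa_i$ on bounded subsets containing $\{x_i^k\}$, together with the lower bound $\underline L_i$ of Assumption~\ref{assump:RelativeSmooth}(ii), I would telescope to obtain $\Phi(x^k)-\Phi(x^{k+1}) \geq c\,\|x^{k+1}-x^k\|^2$ for a uniform $c>0$. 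Boundedness from below of $\Phi$ then yields $\|x^{k+1}-x^k\|\to 0$. For any limit point $x^*$ of a convergent subsequence $x^{k_n}\to x^*$, one also has $x^{k_n+1}\to x^*$, so passing to the limit in the optimality condition (and using continuity of $\nabla_i f$ and $\nabla\kappa_i$, which is where $\mathcal X_i\subset{\rm int\,dom}\,\kappa_i$ matters) makes the Bregman terms cancel, leaving $\langle \nabla_i f(x^*), y_i-x_i^*\rangle \geq 0$ for all $y_i\in\mathcal X_i$. Hence $0\in\nabla_i f(x^*)+N_{\mathcal X_i}(x_i^*) = \partial_i\Phi(x^*)$ for every $i$, i.e., $x^*$ is a critical point.

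\textbf{Part (iii).} I would invoke Theorem~\ref{thm:globalex}, assuming (as is standard in Bolte-Sabach-Teboulle style analyses) that $\Phi$ is a KL function. (B1) is the sufficient decrease derived in (ii). For (B2), rearranging the same optimality condition gives
\[
\nabla_i f(x^{k+1}) - \nabla f_i^k(x_i^k) - L_i^k\bigl(\nabla\kappa_i(x_i^{k+1})-\nabla\kappa_i(x_i^k)\bigr) \in \partial_i\Phi(x^{k+1}),
\]
and Lipschitz continuity of $\nabla_i f$ and $\nabla\kappa_i$ on bounded sets, combined with the uniform upper bound $\overline L_i$, controls the right-hand side by a constant multiple of $\|x^{k+1}-x^k\|$. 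Condition (B4) follows from continuity of $f$ and closedness of the $\mathcal X_i$, while (B3) is the KL assumption.

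The main obstacle is the careful derivation of the strengthened per-block descent via the three-point Bregman identity and the bookkeeping required for (B2): because the update at block $i$ uses $\nabla_i f$ evaluated at a mix of iterates from steps $k$ and $k+1$, expressing a subgradient of $\Phi$ at the fully-updated point $x^{k+1}$ requires invoking Lipschitz continuity of $\nabla_i f$ to replace $\nabla f_i^k(x_i^k)$ by $\nabla_i f(x^{k+1})$ at a cost proportional to $\|x^{k+1}-x^k\|$.
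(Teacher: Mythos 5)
Your proposal does not prove the stated theorem. The statement here is the abstract convergence result (Theorem~\ref{thm:globalex}): for \emph{any} proper, lower semicontinuous, bounded-below $\Phi$ and \emph{any} bounded sequence satisfying the sufficient decrease (B1), the subgradient bound (B2), the KL property (B3) and the continuity condition (B4), the sequence has finite length and converges to a critical point. What you have written is instead a proof of Theorem~\ref{thm:BMDconvergence}, i.e., a verification that the BMD iterates satisfy (B1)--(B4) --- and in your Part (iii) you explicitly ``invoke Theorem~\ref{thm:globalex}''. Relative to the statement you were asked to prove, this is circular: you assume the target result as a black box and prove something downstream of it. None of the machinery specific to the target theorem appears in your argument; in particular the KL property (B3) is never actually used beyond being labelled ``the KL assumption'', whereas it is the engine of the whole proof.

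A genuine proof of Theorem~\ref{thm:globalex} runs along the Attouch--Bolte--Svaiter / Bolte--Sabach--Teboulle lines: one first shows that $\Phi(x^k)$ decreases to some limit $\Phi^*$, that the set $\Omega$ of limit points is nonempty, compact and connected, that $\Omega$ consists of critical points, and that $\Phi\equiv\Phi^*$ on $\Omega$ (this is where (B4) and lower semicontinuity enter). The uniformized KL inequality then supplies a concave desingularizing function $\varphi$ with $\varphi'\bigl(\Phi(x^k)-\Phi^*\bigr)\,\mathrm{dist}\bigl(0,\partial\Phi(x^k)\bigr)\geq 1$ for all large $k$; combining this with (B1), (B2) and the concavity of $\varphi$ yields an estimate of the form $2\|x^{k+1}-x^k\|\leq \|x^k-x^{k-1}\|+\tfrac{\rho_2}{\rho_1}\bigl(\varphi(\Phi(x^k)-\Phi^*)-\varphi(\Phi(x^{k+1})-\Phi^*)\bigr)$, which telescopes to $\sum_k\|x^{k+1}-x^k\|<\infty$; the sequence is therefore Cauchy, and its limit is critical by (B2) together with the closedness of the limiting subdifferential. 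For what it is worth, the paper does not reprove this theorem either --- it states it inside the proof of Theorem~\ref{thm:BMDconvergence} and imports it from \cite{Bolte2014} --- but your submission is a proof of a different statement, not of this one.
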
 

We will prove Theorem \ref{thm:BMDconvergence} (iii) by verifying all the conditions in Theorem \ref{thm:globalex} for  $\Phi$. Let us first recall the following three-point property.

\begin{proposition}[Property 1 of \cite{Tseng2008}]\label{prop:threepoint}
Let $ z^+=	\arg\min_{u \in \mathcal X_i} \phi(u)+\mathcal B_{\kappa}(u,z)$, where $\phi$ is a proper convex function, $\mathcal X_i$ is convex and $\mathcal B_{\kappa}$ is the Bregman distance with respect to $\kappa_i$. Then for all $u\in \mathcal X_i$ we have 
\[
\phi (u) + \mathcal B_{\kappa}(u,z) \geq \phi(z^+) + \mathcal B_{\kappa}(z^+,z) + \mathcal B_{\kappa}(u,z^+).
\]
\end{proposition}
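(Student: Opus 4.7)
The plan is to derive the inequality by combining the first-order optimality condition characterizing $z^+$ with the standard three-point identity for Bregman divergences. The result is essentially an extension of the classical nonexpansive/Pythagorean property of Euclidean projections, adapted to Bregman geometry.

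First, I would compute the gradient of $u \mapsto \mathcal B_\kappa(u,z)$: since $\mathcal B_\kappa(u,z) = \kappa(u) - \kappa(z) - \langle \nabla \kappa(z), u-z\rangle$, this gradient is $\nabla \kappa(u) - \nabla \kappa(z)$. Since $z^+$ minimizes $\phi(u) + \mathcal B_\kappa(u,z)$ over the convex set $\mathcal X_i$, and $\mathcal B_\kappa(\cdot,z)$ is differentiable, the subdifferential sum rule yields the existence of $g \in \partial \phi(z^+)$ and a normal-cone element such that, for every $u\in\mathcal X_i$,
\begin{equation*}
\langle g + \nabla \kappa(z^+) - \nabla \kappa(z),\; u - z^+ \rangle \;\geq\; 0.
\end{equation*}
Convexity of $\phi$ then gives the subgradient inequality $\phi(u) \geq \phi(z^+) + \langle g, u - z^+\rangle$ for all $u \in \mathcal X_i$.

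Second, I would establish the three-point identity for Bregman divergences,
\begin{equation*}
\mathcal B_\kappa(u,z) - \mathcal B_\kappa(u,z^+) - \mathcal B_\kappa(z^+,z) \;=\; \langle \nabla \kappa(z^+) - \nabla \kappa(z),\; u - z^+\rangle,
\end{equation*}
by directly substituting the definition $\mathcal B_\kappa(x,y)=\kappa(x)-\kappa(y)-\langle \nabla\kappa(y),x-y\rangle$ into each of the three terms on the left-hand side; the $\kappa(u)$ and $\kappa(z^+)$ contributions cancel, and regrouping the remaining linear terms produces the right-hand side.

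Finally, I would combine the three ingredients. The optimality condition rewrites as $\langle \nabla \kappa(z^+)-\nabla \kappa(z), u-z^+\rangle \geq -\langle g, u-z^+\rangle$, which via the three-point identity becomes
\begin{equation*}
\mathcal B_\kappa(u,z) - \mathcal B_\kappa(u,z^+) - \mathcal B_\kappa(z^+,z) \;\geq\; -\langle g, u - z^+\rangle,
\end{equation*}
and adding the subgradient inequality $\phi(u) - \phi(z^+) \geq \langle g, u-z^+\rangle$ eliminates $g$ and yields the claim after rearrangement. There is no real obstacle beyond bookkeeping; the only minor subtlety is justifying the existence of the subgradient $g$ with the appropriate optimality condition on $\mathcal X_i$, which is standard given that $\mathcal B_\kappa(\cdot,z)$ is smooth on the interior of $\operatorname{dom}\kappa$ and $z^+$ is assumed to lie there (as implicit from the fact that the argmin is well-defined).
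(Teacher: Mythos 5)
Your proof is correct and is the standard argument for this result: first-order optimality for $z^+$ combined with the Bregman three-point identity $\mathcal B_\kappa(u,z)-\mathcal B_\kappa(u,z^+)-\mathcal B_\kappa(z^+,z)=\langle\nabla\kappa(z^+)-\nabla\kappa(z),\,u-z^+\rangle$ and the subgradient inequality for $\phi$. The paper itself gives no proof, importing the statement directly from Tseng's work, and your derivation matches the one in that cited reference.
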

\noindent Denote $\Phi_i^k(x_i)=f_i^k(x_i)+I_{\mathcal X_i}(x_i)$. 
We have
\begin{equation*}
\begin{split}
f^k_i(x_i^{k+1})
& \leq f_i^k(x_i^{k} ) + \langle \nabla f_i^k(x_i^{k} ),x_i^{k+1}-x_i^{k}  )  \rangle + L_i^{k} \mathcal B_{\kappa_i} (x_i^{k+1},x_i^{k} )\\
&\leq f_i^k(x_i^{k} ) + \langle \nabla f_i^k(x_i^{k} ),x_i-x_i^{k} )  \rangle + L_i^{k}  \mathcal B_{\kappa_i} (x_i,x_i^{k} ) - L_i^{k}  \mathcal B_{\kappa_i} (x_i,x_i^{k+1} )\\
& \leq  f_i^k(x) + L_i^{k}  \mathcal B_{\kappa_i} (x,x_i^{k} )  - L_i^{k}  \mathcal B_{\kappa_i} (x,x_i^{k+1} ),
\end{split}
\end{equation*}
where the first inequality uses Assumption \ref{assump:RelativeSmooth}, the second inequality uses  Proposition \ref{prop:threepoint} applied for \eqref{eq:BMDstep} and the third uses convexity of $f_i$. 
Note that $I_{\mathcal X_i}(x)=0$ if $x\in \mathcal{X}$ and $I_{\mathcal X_i}(x)=+\infty$ otherwise. Hence, for all $x_i$, 
\begin{equation}
\label{eq:fk}
\Phi^k_i(x_i^{k+1}) \leq \Phi(x_1^{k+1},\ldots,x_{i-1}^{k+1},x_i,x_{i+1}^k,\ldots,x_s^k)+ L_i^{k}\big( \mathcal B_{\kappa_i} (x_i,x_i^{k} )  -  \mathcal B_{\kappa_i} (x_i,x_i^{k+1})\big).
\end{equation}
Choosing $x=x_i^k$ in \eqref{eq:fk}, we have
$ \Phi^k_i\big(x_i^{k+1}\big) \leq  \Phi^k_i(x_i^k) - \underline{L}_i  \mathcal B_{\kappa_i} \big(x_i^k,x_i^{k+1} \big).
$
Therefore, 
\begin{equation}
\label{ieg:decreasing} 
\begin{split}
\Phi \big(x^{k+1}\big) -  \Phi \big(x^{k}\big) &= \sum_{i=1}^s \Phi^k_i\big(x_i^{k+1}\big) - \Phi^k_i\big(x_i^{k}\big) \leq - \sum_{i=1}^s \underline{L}_i  \mathcal B_{\kappa_i} \big(x_i^k,x_i^{k+1} \big).
\end{split}
\end{equation}
From \eqref{ieg:decreasing}, we see that $\Phi \big(x^{k}\big)$ is non-increasing. Theorem \ref{thm:BMDconvergence}(i) is proved. 

As $\{x^k\}$ is assumed to be bounded, then there exists positive constants $A_i$ such that $\|x_i^k \| \leq A_i$, $i=1,\ldots,s$, for all $k$. Suppose a subsequence $x^{k_n} \to x^*$. As $ \mathcal X$ is closed convex set and $x^{k_n} \in \mathcal X$, then $x^*\in  \mathcal X$. Hence, $\Phi(x^{k_n}) \to \Phi(x^*)$, i.e., the continuity condition (B4) is satisfied.

Since $\kappa_i$ is strongly convex on the sets $\{x_i: x_i \in \mathcal X_i, \|x_i^k \| \leq A_i \}$, we have
$\mathcal{B}_{\kappa_i}\big(x_i^k,x_i^{k+1} \big)\geq \sigma_i/2\|x_i^k-x_i^{k+1}\|^2,
$
for some constant $\sigma_i$. Hence, from \eqref{ieg:decreasing} we derive the sufficient decrease property (B1) and $\sum_{k=1}^\infty \|x^k-x^{k+1}\|^2 <+\infty$. 
Consequently,  $\|x^k-x^{k+1}\| \to 0$. Then we also have $x^{k_n+1}\to x^*$. In \eqref{eq:fk} we let $k=k_n\to \infty$ and note that $x_i^*\in \mathcal X_i \subset {\rm  int\, dom}\,  \kappa_i$, then we obtain 
$\Phi(x^*)\leq \Phi(x_1^*,\ldots,x_i,\ldots,x_s^*)$,  $\forall \,x_i$. This means $x_i^*$ is a local minimizer of  $\min_{x_i}  \Phi(x_1^*,\ldots,x_i,\ldots,x_s^*)$. Hence $0\in \partial_i \Phi(x^*)$, for $i=1,\ldots s$. In other words, we have $0\in \partial \Phi(x^*)$, i.e., Theorem \ref{thm:BMDconvergence} (ii) is proved. 

To prove Theorem \ref{thm:BMDconvergence} (iii), it remains to verify the boundedness of subgradients.  From \cite[Proposition 2.1]{Attouch2010} we have
\begin{align*}
\partial \Phi\big(x^{k}\big)=\partial \Phi_1\big(x^{k}\big)\times \partial_s \Phi\big(x^{k}\big)=\{\nabla_1 f\big(x^{k}\big)+ \partial I_{\mathcal X_1}(x_1^k)\} \times \ldots \times \{\nabla_s f\big(x^{k}\big)+ \partial I_{\mathcal X_s}(x_s^k)\}.
\end{align*}
It follows from \eqref{eq:BMDstep} that 
$0\in \nabla f_i^k(x_i^k) + \partial I_{\mathcal X_i} (x_i^{k+1}) + L_i^{k+1} \big(\nabla \kappa_i (x_i^{k+1})-\nabla \kappa_i (x_i^{k})\big)$.  
Hence, 
\begin{equation}
\label{eq:temp1}
\begin{split}
\omega_i^{k+1}&=\nabla_i f\big(x^{k+1}\big)+ \partial I_{\mathcal X_1}(x_1^{k+1})\\
& =\nabla_i f\big(x^{k+1}\big)-\nabla f_i^k(x_i^k) +  L_i^{k} \big(\nabla \kappa_i (x_i^{k})-\nabla \kappa_i (x_i^{k+1})\big) \in \partial \Phi_i\big(x^{k+1}\big).
\end{split}
\end{equation} 
 As $\nabla_i f$ and $\nabla \kappa_i$ are Lipschitz continuous on  $\{x_i: \|x_i^k \| \leq A_i \}$, we derive from \eqref{eq:temp1} that there exists some constant $a_i$ such that $\|\omega_i^{k+1}\|\leq a_1 \|x^{k}-x^{k+1}\|$, for $i=1,\ldots,s$. Then the boundedness of subgradients in (B4) is satisfied. Theorem \ref{thm:BMDconvergence} is proved.
 
  \end{proof}
 \vspace{-0.1in}
\subsection{Proof of Proposition \ref{prop:SN_nonincreasing}} \label{app:proofSNmono} 
\textbf{Proposition \ref{prop:SN_nonincreasing}}:
The objective function for the perturbed KL-NMF problem~\eqref{KLNMF} is non-increasing under the updates of Algorithm~\ref{alg:SN}. 
\begin{proof}
We note that if $g(x)$ is a self-concordant function with constant $\mathbf c$ then $\mathbf c^2\, g(x)$  is a standard self-concordant function. Hence, using the result of \cite[Theorem 6]{Tran2015} (see also Section \ref{sec:self-con}), we derive that the objective function of \eqref{KLNMF} is strictly decreasing when a damp Newton step is used. We now prove the proposition for the case when a full Newton step is used, that is when the gradient  $f'_{W_{ik}}\leq 0$ or $\lambda\leq 0.683802$ and the update of $W_{ik}$ then is $ W_{ik}^+= s = W_{ik} -  \frac{f'_{W_{ik}}}{f''_{W_{ik}}} $.

Consider the case $f'_{W_{ik}}\leq 0$. Let us use $f_{W_{ik}}$ to denote the objective of \eqref{KLNMF} with respect to $W_{ik}$. Considering the function $W_{ik}\mapsto g(W_{ik})=f'_{W_{ik}}$ with $W_{ik}\geq 0$,  we see that $g(W_{ik})$ is a concave function. Hence, we have 
$g(W_{ik}^+)-g(W_{ik}) \leq g'(W_{ik}) (W_{ik}^+-W_{ik}). 
$
This implies that 
$
f'_{W_{ik}^+} \leq f'_{W_{ik}} - f''_{W_{ik}}  \frac{f'_{W_{ik}}}{f''_{W_{ik}}}=0. 
$ 
Since $W_{ik} -W_{ik}^+ =   \frac{f'_{W_{ik}}}{f''_{W_{ik}}} \leq 0$ and $W_{ik} \mapsto f_{W_{ik}}$ is convex, we then obtain 
$f_{W_{ik}} \geq f_{W_{ik}^+} + f'_{W_{ik}^+}(W_{ik}-W_{ik}^+) \geq f_{W_{ik}^+}. 
$ Hence, the objective of \eqref{KLNMF} is non-increasing when we update $W_{ik}$ to $W_{ik}^+$. 

Consider the case $\lambda\leq 0.683802$. Denote $W_{ik}^\alpha=W_{ik} + \alpha d$. When $\alpha=1$ we have $W_{ik}^\alpha=s$, that is when a full proximal Newton step is applied. It follows from \cite[Inequality (58)]{Tran2015}  that 
$f_{W_{ik}^\alpha} \leq f_{W_{ik}} - \alpha \lambda^2 + \omega_*(\alpha \lambda)
$
for $\alpha \in (0,1]$, where $\omega_*(t)=- t - \log(1-t)$.  Hence, when $\alpha=1$ we get
$f_{W_{ik}^\alpha} \leq f_{W_{ik}} - (\lambda^2 +  \lambda + \log (1-\lambda)).$ It is not difficult to see that when $\lambda\leq 0.683802$ the value of $\lambda^2 +  \lambda + \log (1-\lambda)$ is positive. Hence, in this case, a full proximal Newton step also decreases the objective function. 
\end{proof} 
\end{document}